\newif\ifpaper
\newif\ifarxiv
  \newcommand{\cites}[1]{\cite{#1}}
\newcommand{\printreferences}{
  \ifarxiv        
    \bibliographystyle{plainnat}
    \bibliography{aux/refs}
   \else
    \printbibliography[heading=bibintoc]
  \fi
}
\newcommand{\defined}[2][\key]{%
  \def\key{#2}%
  \textbf{#2}%
  \index{#1}%
}
\newcommand{\N}{\bN}
\newcommand{\Z}{\bZ}
\newcommand{\R}{\bR}
\newcommand{\C}{\bC}
\newcommand{\su}{\mathfrak{su}}
\newcommand{\gl}{\mathfrak{gl}}
\newcommand{\U}{{\rm U}}
\newcommand{\PU}{{\bP\U}}
\DeclareMathOperator{\Ad}{Ad}
\DeclareMathOperator{\Gr}{Gr}
\DeclareMathOperator{\Hom}{Hom}
\DeclareMathOperator{\ad}{ad}
\DeclareMathOperator{\codim}{codim}
\DeclareMathOperator{\rk}{rk}
\DeclareMathOperator{\tr}{tr}
\newcommand{\End}{\mathop{\rm End}\nolimits}
\newcommand{\sEnd}{\mathop{\sE nd}\nolimits}
\renewcommand{\det}{\operatorname{det}}
\newcommand{\Bl}{\mathrm{Bl}}
\newcommand{\delbar}{\bar{\del}}
\newcommand{\del}{\partial}
\newcommand{\id}{{\rm id}}
\newcommand{\loc}{{\rm loc}}
\newcommand{\sing}{{\rm sing}}
\newcommand{\vol}{{\rm vol}}
\renewcommand{\P}{\bP}
\renewcommand{\epsilon}{\varepsilon}
\def\({\left(}
\def\){\right)}
\def\<{\left\langle}
\def\>{\right\rangle}
\newcommand{\co}{\mskip0.5mu\colon\thinspace}
\newcommand{\iso}{\cong}
\newcommand{\qand}{\quad\text{and}}
\newcommand{\qandq}{\quad\text{and}\quad}
\DeclarePairedDelimiter{\Abs}{\|}{\|}
\DeclarePairedDelimiter{\abs}{\lvert}{\rvert}
\DeclarePairedDelimiter\ceil{\lceil}{\rceil}
\DeclarePairedDelimiter{\set}{\lbrace}{\rbrace}
\DeclarePairedDelimiter{\braket}{\langle}{\rangle}
\newcommand{\inner}[2]{\braket{#1, #2}}
\newcommand{\winner}[2]{\braket{#1 \wedge #2}}
\def\MSC#1{\href{http://www.ams.org/msc/msc2010.html?&s=#1}{#1}}
\renewcommand\xleftrightarrow[2][]{%
  \ext@arrow 9999{\longleftrightarrowfill@}{#1}{#2}}
\newcommand\longleftrightarrowfill@{%
  \arrowfill@\leftarrow\relbar\rightarrow}
\newcommand{\Gtwo}{G_2}
\newcommand{\rd}{{\rm d}}
\newcommand{\bC}{\mathbf{C}}
\newcommand{\bN}{\mathbf{N}}
\newcommand{\bP}{\mathbf{P}}
\newcommand{\bR}{\mathbf{R}}
\newcommand{\bZ}{\mathbf{Z}}
\newcommand{\cL}{\mathcal{L}}
\newcommand{\sA}{\mathscr{A}}
\newcommand{\sE}{\mathscr{E}}
\newcommand{\sF}{\mathscr{F}}
\newcommand{\sL}{\mathscr{L}}
\newcommand{\sM}{\mathscr{M}}
\newcommand{\fe}{{\mathfrak e}}
\newcommand{\fu}{{\mathfrak u}}
\newcommand{\fK}{{\mathfrak K}}
\newcommand{\fL}{{\mathfrak L}}
\numberwithin{equation}{section}
\renewcommand{\eqref}[1]{\hyperref[#1]{\rm(\ref*{#1})}}
\def\makeautorefname#1#2{\AtBeginDocument{\expandafter\def\csname#1autorefname\endcsname{#2}}}
\newcommand{\mynewtheorem}[2]{
  \newaliascnt{#1}{equation}          
  \newtheorem{#1}[#1]{#2}
  \aliascntresetthe{#1}
  \makeautorefname{#1}{#2}
}
\newcommand{\mynewproblem}[2]{
  \newaliascnt{#1}{myProblem}          
  \newtheorem{#1}[#1]{#2}
  \aliascntresetthe{#1}
  \makeautorefname{#1}{#2}
}
\newtheorem*{theorem*}{Theorem}
\newtheorem*{prop*}{Proposition}
\newtheorem*{conjecture*}{Conjecture}
\newtheorem{step}{Step}
\newtheorem{substep}{Step}
\numberwithin{substep}{step}
\numberwithin{subcase}{case}
\theoremstyle{remark}
\newtheorem*{remark*}{Remark}
\theoremstyle{definition}
\newtheorem*{definition*}{Definition}
\newtheorem*{convention*}{Convention}
\newtheorem*{conventions*}{Conventions}
\newtheorem*{question*}{Question}
\author{
  Adam Jacob \\
  UC Davis
  \and
  Thomas Walpuski \\
  Massachusetts Institute of Technology
}
\title{
  Hermitian Yang--Mills metrics on reflexive sheaves over asymptotically cylindrical Kähler manifolds}
\date{2017-01-27}
\begin{document}

\maketitle

\begin{abstract}
  We prove an analogue of the Donaldson--Uhlenbeck--Yau theorem for asymptotically cylindrical Kähler manifolds:
  If $\sE$ is a reflexive sheaf over an ACyl Kähler manifold, which is asymptotic to a $\mu$--stable holomorphic vector bundle,
  then it admits an asymptotically translation-invariant projectively Hermitian Yang--Mills metrics (with curvature in $L^2_\loc$ across the singular set).
  Our proof combines the analytic continuity method of \citet{Uhlenbeck1986} with the geometric regularization scheme introduced by \citet{Bando1994}.
\end{abstract}

{\small
  \noindent
  \textbf{Keywords:}
    \textit{holomorphic vector bundles, reflexive sheaves, Hermitian Yang--Mills metrics, asymptotically cylindrical Kähler manifolds} \\
  \medskip
  \noindent
  \textbf{MSC2010:}
    \MSC{53C07}; \MSC{14J60}, \MSC{32Q15}
}


\section{Introduction}
\label{Sec_Introduction}

In this paper we construct (singular) projectively Hermitian Yang--Mills ($\P$HYM) metrics over a certain class of complete non-compact Kähler manifolds.

In the compact case this problem has been extensively studied.
Its solution provides a particularly beautiful example of the relation between canonical metrics and algebro-geometric notions of stability:
a holomorphic vector bundle over a compact  Kähler admits a $\P$HYM metric if and only if is $\mu$--polystable.
This was first proved for curves by \citet{Narasimhan1965},
for algebraic surfaces by \citet{Donaldson1985},
and for arbitrary compact Kähler manifolds by \citet{Uhlenbeck1986}.

It is an interesting and important question to ask:
under which hypothesis does a holomorphic vector bundle over a complete non-compact Kähler manifolds admit a $\P$HYM metric?\footnote{%
  This question was also raised in Yau's 2015 Shanks Lecture \cite[p.~66]{Yau2015}.}
The answer to this question is not completely understood, but a number of partial results have been obtained.
For asymptotically conical Kähler manifolds, Bando proved the existence of $\P$HYM metrics on holomorphic vector bundles which are flat at infinity \cite{Bando1993}.
\citet{Ni2001} proved that a holomorphic vector bundle over a complete non-compact Kähler manifold with a spectral gap admits a $\P$HYM metric if and only if it admits a metric whose failure to be $\P$HYM is in $L^p$ for $p > 1$  (using an argument similar to Donaldson's solution of the Dirichlet problem for the $\P$HYM equation \cite{Donaldson1992}).
\citet{Ni2002} showed that the same conclusion holds, for example, if the Kähler manifold satisfies a $L^2$ Sobolev inequality and $p \in [1,n/2)$, or if it is non-parabolic (i.e., admits a positive Green's function) and $p=1$.

\paragraph{Main result}

In this article we concentrate on the asymptotically cylindrical case, and in view of the applications we have in mind we work with reflexive sheaves (not just holomorphic vector bundles).

\begin{theorem}
  \label{Thm_ACylDUY}
  Let $V$ be an asymptotically cylindrical (ACyl) Kähler manifold with asymptotic cross-section $D$.
  Let $\sE_D$ be a $\mu$--stable vector bundle over $D$, and
  $\sE$ a reflexive sheaf asymptotic to $\sE_D$.

  In this situation there exists an asymptotically translation-invariant Hermitian metric $H$ on $\sE$ which satisfies the projective Hermitian Yang--Mills ($\P$HYM) equation
  \begin{equation}
    \label{Eq_PHYM}
    K_{H} 
    :=
      i\Lambda F_{H} - \frac{\tr(i\Lambda F_{H})}{\rk \sE}\cdot \id_\sE
    =
      0,
  \end{equation}
  and $|F_H| \in L^2_\loc(V)$.
\end{theorem}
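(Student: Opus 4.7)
The plan is to combine the Uhlenbeck--Yau continuity method with Bando--Siu's geometric regularization, adapted to the asymptotically cylindrical setting. First, I would fix a reference metric. Since $\sE_D$ is $\mu$--stable on the compact Kähler manifold $D$, the classical Donaldson--Uhlenbeck--Yau theorem produces a $\P$HYM metric $H_D$ on $\sE_D$. Pulling $H_D$ back to the cylindrical model $D\times[0,\infty)$ and patching it with any smooth Hermitian metric on the locally free locus of $\sE$ over the compact core yields a background metric $H_0$ on $\sE|_{V\setminus\sing(\sE)}$ which is translation-invariant at infinity and has $K_{H_0}$ compactly supported away from $\sing(\sE)$.

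Next I would run the continuity method along the family
\begin{equation*}
  K_H = -\epsilon\,\log h, \qquad \epsilon\in(0,1],
\end{equation*}
where $h = H_0^{-1} H$ is the trace-free comparison endomorphism; let $I \subset (0,1]$ be the set of $\epsilon$ for which a solution $h_\epsilon$ exists in exponentially weighted Hölder spaces on $V\setminus\sing(\sE)$ decaying along the cylindrical end. The endpoint $\epsilon = 1$ lies in $I$ by a standard contraction argument using the strict positivity of the linearized operator. Openness follows from the implicit function theorem: the linearization $\Delta_{H_0} + \epsilon$ is asymptotic at infinity to a translation-invariant elliptic operator on $D\times\R$, hence Fredholm between appropriate weighted Sobolev spaces and invertible for $\epsilon > 0$ by integration by parts.

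The crux of the argument is a uniform $C^0$ bound on $s_\epsilon := \log h_\epsilon$ as $\epsilon\downarrow 0$, which gives closedness of $I$. A Weitzenböck identity yields
\begin{equation*}
  \Delta|s_\epsilon| \le |K_{H_0}| + \epsilon |s_\epsilon|
\end{equation*}
weakly, away from the zero locus of $s_\epsilon$; on a non-compact manifold this alone is insufficient, and I would exploit stability as in Uhlenbeck--Yau. If $\|s_\epsilon\|_{C^0}\to\infty$, normalize $u_\epsilon := s_\epsilon/\|s_\epsilon\|_{C^0}$, extract a weak $L^2_1$ limit $u_\infty \ne 0$, and use spectral projections of $u_\infty$ to construct a non-trivial weakly holomorphic subsheaf $\sF \subset \sE$. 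Translation invariance of $H_0$ at infinity, together with exponential decay of the subleading terms of $s_\epsilon$, forces $\sF$ to induce a weakly holomorphic subsheaf $\sF_D \subset \sE_D$ with $\mu(\sF_D) \ge \mu(\sE_D)$, contradicting the strict $\mu$-stability of $\sE_D$. Hence $\|s_\epsilon\|_{C^0}$ is uniformly bounded, $I$ is closed, and a limit $h_0$ solves the $\P$HYM equation.

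Finally, to extend the resulting metric across $\sing(\sE)$ with $|F_H|\in L^2_\loc$, I would apply Bando--Siu's local energy argument: reflexivity forces $\sing(\sE)$ to have complex codimension at least three, so cut-off functions supported outside a shrinking tubular neighborhood of $\sing(\sE)$ contribute negligibly to the Chern--Weil identity, yielding uniform $L^2_\loc$ curvature bounds on an exhausting family of smooth approximations. Exponential decay of $h_0$ to the identity along the cylindrical end follows from the Fredholm theory of $\Delta_{H_0}$ on exponentially weighted spaces once $s_0$ is $C^0$-bounded, giving the asymptotic translation invariance. The principal obstacle is the $C^0$ bound: identifying the correct stability hypothesis to destabilize---namely $\mu$-stability of $\sE_D$ rather than of $\sE$ itself---and controlling the renormalized limit carefully enough that spectral projections on the cross-section make sense, all while simultaneously handling the singular set of $\sE$ on the compact core.
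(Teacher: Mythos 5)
Your high-level plan (reference metric from the DUY theorem on $D$, Uhlenbeck--Yau continuity, Bando--Siu regularization) is the right skeleton, and your openness argument and choice of weighted Hölder spaces match the paper's. But the two places where the real work happens are handled differently from the paper, and in both cases your route has gaps.

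For the a priori $C^0$ bound you propose the classical Uhlenbeck--Yau contradiction argument: blow up $\|s_\epsilon\|_{C^0}$, normalize, extract a weak $L^2_1$ limit $u_\infty \neq 0$, and build a destabilizing subsheaf. On a compact manifold this works because Rellich gives strong $L^2$ convergence and because slopes of subsheaves are well-defined topological quantities. On an ACyl manifold neither is automatic: the normalized sequence $u_\epsilon$ can concentrate arbitrarily far out on the cylindrical end, so the weak limit can vanish identically; and even granting $u_\infty \neq 0$, the weakly holomorphic projection it produces lives on a non-compact, non-parabolic $V$, where the slope calculus and the passage from $\sF \subset \sE$ to a destabilizing $\sF_D \subset \sE_D$ require far more than translation invariance of $H_0$. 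The paper deliberately avoids this route. Instead it follows Sá Earp: the barrier $\Delta(f - A\ell) = 4|K_{H_0}|$ (which grows \emph{linearly} along the end, so cannot directly bound $s$) is first used to show $\|s\|_{L^\infty}^{3/2} \lesssim \|s\|_{L^2}$ over an annulus of length $\asymp \|s\|_{L^\infty}$, and then the Donaldson functional on each compact transverse slice $D_z$ converts $\mu$-stability of $\sE_D$ directly into the reverse inequality $\|s\|_{L^2(D_z)} \lesssim \|s\|_{L^\infty} \, \|K_{H_0e^s}\|_{L^2(D_z)}$, with the latter controlled topologically via Chern--Weil. This is a quantitative slice-wise argument, not a contradiction argument, and it is precisely calibrated to the ACyl geometry. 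Your version would need, at a minimum, an a priori reason that $u_\infty \neq 0$ and a mechanism to carry the destabilizing data out to the cross-section $D$; as stated it is not a proof.

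For the reflexive sheaf case you propose running the continuity method directly on $V\setminus\sing(\sE)$ in exponentially weighted spaces and then using cut-offs near $\sing(\sE)$ to obtain $L^2_\loc$ curvature control. But $V\setminus\sing(\sE)$ is Riemannian-incomplete, and the Fredholm theory you invoke (translation-invariant operators on $D\times\R$) says nothing about what happens near the removed codimension-$\geq 3$ locus; the linearization is not Fredholm in those spaces, and the barrier argument degenerates there. The paper instead resolves $\sing(\sE)$ by blowing up, extends $\sE$ to a genuine vector bundle $\tilde\sE$ on $\tilde V$, and runs the already-proved bundle case against a one-parameter family of ACyl Kähler metrics $g_\epsilon$ that degenerate back to $g$ as $\epsilon\to 0$. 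The technical heart of that step is a uniform-in-$\epsilon$ estimate for the barrier $f_\epsilon$, which comes down to a uniform bound on the Neumann--Poincaré constant of the truncations $(\tilde V_{\epsilon,L}, g_\epsilon)$, established by a discretization argument. Your sketch does not engage with this; without it the passage to $\epsilon\to 0$ does not give the stated bounds. (A smaller inaccuracy: patching $H_D$ with any metric on the core does not make $K_{H_0}$ compactly supported; it only lies in $C^\infty_\delta$, which is exactly why the Poisson barrier is needed.)
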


\begin{remark}
  \label{Rmk_HYMvPHYM}
  A $\P$HYM metric $H$ on $\sE$ is Hermitian Yang--Mills (HYM) if and only if the induced metric $h$ on $\det \sE$ is HYM, that is, $i\Lambda F_h = \frac{\tr(i\Lambda F_{H})}{\rk \sE}$ is constant.
  Every asymptotically translation-invariant line bundle over an ACyl Kähler manifold has a HYM metric;
  however, this metric will typically not be asymptotically translation invariant.
  See \autoref{Sec_HYMLineBundles} for a detailed discussion.
\end{remark}

\begin{remark}
  The definition of asymptotically cylindrical Kähler manifolds we work with is given in \autoref{Def_ACylKahler};
  it includes being asymptotically fibred.
\end{remark}

\begin{remark}
  \label{Rmk_SaEarp}
  The question of the existence of HYM metrics on holomorphic bundles (with trivial determinant) over ACyl Calabi--Yau $3$--folds was studied earlier by Sá Earp \cite{SaEarp2011} (using the Yang--Mills heat flow).
  Our result improves on his in that we consider general ACyl Kähler manifolds and handle reflexive sheaves;
  moreover, we give a complete proof of the exponential decay to a $\P$HYM metric over $D$ (which is crucial for applications).
\end{remark}

\begin{remark}
  In dimension four, there is prior work on the relation between ASD instantons and holomorphic vector bundles over cylindrical manifolds by Guo \cite{Guo1996} and Owens \cite{Owens2001}.
\end{remark}

\paragraph{Examples and applications}

There are plenty of examples of ACyl Kähler manifolds and reflexive sheaves on them.
Given any smooth projective variety $Z$ containing a smooth divisor $D$ and fibred by $\abs{D}$, $V := Z \setminus D$ can be given the structure of an ACyl Kähler manifold \cite[Section 4.2, Part 1]{Haskins2012}.
\autoref{Thm_ACylDUY} can be applied to any holomorphic vector bundle $\sE$ on $Z$ such that $\sE|_D$ is $\mu$--stable.
One often wants to construct $\sE$ by extending a holomorphic vector bundle $\sE_D$ on $D$ to all of $Z$.
This can always be achieved with $\sE$ being a reflexive sheaf---by first extending $\sE_D$ as a torsion-free sheaf and then taking the reflexive hull.
Whether or not this extension can be arranged to be a holomorphic vector bundle is a subtle question.
This is one of the reasons why it is desirable to allow reflexive sheaves.

ACyl Calabi--Yau $3$--folds are an important ingredient in the construction of twisted connected sum $\Gtwo$--manifolds \cite{Kovalev2003,Kovalev2011,Corti2012a}.
Building on \cite{SaEarp2011}, Sá Earp and the second named author gave a construction of a class of Yang--Mills connections, called $\Gtwo$--instantons, over such twisted connected sums \cite{SaEarp2013};
see \cite{Walpuski2015} for a concrete example.
We hope that the current work will be a first step towards the construction of singular $\Gtwo$--instantons on twisted connected sums.
$\Gtwo$--instantons play a central role in Donaldson and Thomas' vision of gauge theory in higher dimensions \cite{Donaldson1998},
and understanding singularities and their formation is an important part of making their ideas rigorous;
see, e.g., \cites{Walpuski2013,Walpuski2013a,Haydys2014}.

\paragraph{Proof idea}

We first prove \autoref{Thm_ACylDUY} for holomorphic vector bundles. 
After a suitiable choice of an initial Hermitian metric $H_0$ on $\sE$, we construct a $\P$HYM metric using the Uhlenbeck--Yau continuity method.
The crucial point is the a priori $C^0$ estimate on the endomorphism $s$ relating $H_0$ and the Hermitian metric $H_t=H_0e^{s}$ along the continuity path.
Unlike in \cites{Bando1993,Ni2002}, a solution to the Poisson equation $\Delta f=\abs{K_{H_0}}$ can not act as a barrier, since on $V$ such a solution does not have exponential decay---in fact, it decreases linearly along cylindrical end. 
Instead, we use an adaptation to our setup of Sá Earp's argument in \cite{SaEarp2011}:
his proof first exploits the barrier to obtain a bound of the form $\Abs{s}_{L^\infty}^3 \lesssim \Abs{s}_{L^2}^2$, and then uses the Donaldson functional on transverse slices along the cylindrical end to show that $\Abs{s}_{L^2} \lesssim \Abs{s}_{L^\infty}$.
Besides the construction of the initial Hermitian metric $H_0$, this is the crucial point at which $\mu$--stability enters into the proof.
To prove a priori exponential decay bounds we use ideas of Haskins, Hein and Nordström \cite{Haskins2012}.

Once \autoref{Thm_ACylDUY} is established for holomorphic vector bundles, we prove the general case for a reflexive sheaf $\sE$ following a geometric regularization scheme, introduced by \citet{Bando1994}, based on approximating $\sE$ and $V$ by a holomorphic vector bundle and a family of ACyl Kähler metrics on a blow-up of $V$.
The main difficulty is controlling the barrier $f$ as the metrics degenerate.
Once $f$ is controlled, the $C^0$ bound on compact subsets away from the singular set of $\sE$ follows, and the arguments from the holomorphic vector bundle case can be applied directly.

\paragraph{Conventions}

We denote by $c > 0$ a generic constant, which depends only on $V$, $\sE$, and the reference metric $H_0$ constructed in \autoref{Sec_UYContinuityMethod}.
Its value might change from one occurrence to the next.
Should $c$ depend on further data we indicate this by a subscript.
We write $x \lesssim y$ for $x \leq c y$ and $x \asymp y$ for $c^{-1}y \leq x \leq cy$.
$O(x)$ denotes a quantity $y$ with $\abs{y} \lesssim x$.


\section{ACyl Kähler manifolds}
\label{Sec_ACylKahler}

In this section we briefly introduce some notation,
recall the necessary linear analysis,
and provide the detail promised in \autoref{Rmk_HYMvPHYM}.

\begin{definition}
  \label{Def_ACylKahler}
  Let $(D,g_D,I_D)$ be a compact Kähler manifold.
  A Kähler manifold $(V,g,I)$ is called \defined{asymptotically cylindrical (ACyl)} with asymptotic cross-section $(D,g_D,I_D)$ if there exists a constant $\delta_V > 0$, a compact subset $K \subset V$ and a diffeomorphism $\pi \co V\setminus K \to (1,\infty) \times S^1 \times D$ such that
  \begin{equation*}
    \abs{\nabla^k (\pi_*g - g_\infty)}
    +
    \abs{\nabla^k (\pi_*I - I_\infty)}
    =
    O(e^{-\delta_V \ell}),
  \end{equation*}
  for all $k \in \N_0$, with
  \begin{equation*}
    g_\infty
    :=
    \rd \ell^2\oplus \rd\theta^2 \oplus g_D
    \qandq
    I_\infty
    =
    \begin{pmatrix}
      0 & -1 \\
      1 & 0
    \end{pmatrix}
    \oplus
    I_D.
  \end{equation*}
  Here $(\ell,\theta)$ are the canonical coordinates on $(0,\infty)\times S^1$.
  Moreover, we assume that the map $V\setminus K \to (0,\infty) \times S^1$ is holomorphic.
\end{definition}

In what follows, we suppose an ACyl Kähler manifold $V$ with asymptotic cross-section $D$ has been fixed.
By slight abuse of notation we denote by $\ell\co V \to [0,\infty)$ a smooth extension of $\ell\circ \pi\co V\setminus K \to (1,\infty)$ such that $\ell \leq 1$ on $K$.
Given $L > 1$, we define the truncated manifold
\begin{equation*}
  V_L := \ell^{-1}([0,L]).
\end{equation*}
Given $z = (L,\theta) \in (1,\infty)\times S^1$, we set
\begin{equation}
  \label{Eq_Dz}
  D_z := \pi^{-1}(\set{(L,\theta)}\times D).
\end{equation}

\subsection{Reflexive sheaves and Hermitian metrics}
\label{Sec_ReflexiveHermitian}
 
\begin{definition}
  \label{Def_ATIReflexiveSheaf}
  Let $\sE_D = (E_D,\delbar_D)$ be a holomorphic vector bundle over $D$.
  Let $\sE$ be a reflexive sheaf over $V$ with singular set $S := \sing(\sE)$ and underlying smooth vector bundle $E \to V\setminus S$.
  We say that $\sE$ is \defined{asymptotic} to $\sE_D$ if the following hold:
  \begin{itemize}
  \item 
    There exists a constant $L_0 \geq 2$ such that $S \subset V_{L_0 - 1}$.
    In particular, $E|_{V\setminus V_{L_0}}$ has a $\delbar$--operator.
  \item
    There exists a bundle isomorphism $\bar\pi \co E|_{V\setminus V_{L_0}} \to E_\infty$ covering $\pi$ and a constant $\delta_\sE > 0$ such that
    \begin{equation*}
      \abs{\nabla^k (\bar\pi_*\delbar - \delbar_\infty)} = O(e^{-\delta_\sE\ell}),
    \end{equation*}
    for all $k \in \N_0$ and $\ell \geq L_0$. 
    Here $\sE_\infty = (E_\infty,\delbar_\infty)$ is the pullback of $\sE_D = (E_D,\delbar_D)$ to $(L_0,\infty)\times S^1\times D$;
    moreover, we have chosen an auxiliary Hermitian metric on $E_D$ and pulled it back to $E_\infty$.\footnote{%
      The definition is insensitive to the precise choice, since $D$ is compact.}
  \end{itemize}
  We say that $(\sE,\delbar)$ is \defined{asymptotically translation-invariant} if it is asymptotic to some holomorphic vector bundle over $D$.
\end{definition}

\begin{definition}
  Let $\sE$ be a reflexive sheaf over $V$ asymptotic to $\sE_D$.
  Let $H_D$ be a Hermitian metric on $E_D$.
  A \defined{Hermitian metric} on $\sE$ is a Hermitian metric $H$ on $\sE|_{V\setminus S}$.
  We say that it is \defined{asymptotic} to $H_D$ if there exist a constant $\delta_H > 0$ such that
  \begin{equation*}
    \abs{\nabla^k (\bar\pi_*H - H_\infty)} = O(e^{-\delta_H\ell})
  \end{equation*}
  for all $k \in \N_0$ and $\ell \geq L_0$.
  Here $H_\infty$ is the pullback of $H_D$ to $\sE_\infty$.
  (We take the background metric, used in the comparison, to be $H_\infty$.)
  We say that $H$ is \defined{asymptotically translation-invariant} if it is asymptotic to some Hermitian metric $H_D$.
\end{definition}

Given a Hermitian metric $H$ on a holomorphic vector bundle $(\sE,\delbar)$,
there exists a unique connection $A_H$,
called the \defined{Chern connection},
which preserves the Hermitian metric and satisfies $\nabla^{0,1}_{A_H} = \delbar$;
see, e.g., \cite[Theorem 3.18]{Ballmann2006}.
We denote the curvature of this connection by $F_H$.

\begin{definition}
  A Hermitian metric $H$ on a reflexive sheaf $\sE$ is called \defined{projectively Hermitian Yang--Mills ($\P$HYM)} if $K_H \in C^\infty(V\setminus S,i\su(E,H))$ defined by
  \begin{equation*}
    K_H := i\Lambda F_{H} - \frac{\tr(i\Lambda F_{H})}{\rk \sE} \cdot \id_\sE
  \end{equation*}
  vanishes.
\end{definition}

\subsection{Linear analysis}
\label{Sec_LinearAnalysis}

In the subsequent sections we need a few results about linear analysis on ACyl Kähler manifolds.
We will simply state the required results and sketch their proofs.
For a nice review of linear analysis on ACyl manifolds we refer the reader to \cite[Section 2.1]{Haskins2012};
see also Maz'ya and Plamenevski{\u\i} \cite{Mazya1978} and Lockhart and McOwen \cite{Lockhart1985}.

Fix a holomorphic vector bundle $\sE$ asymptotic to $\sE_D$ and a Hermitian metric $H$ asymptotic to $H_D$.

\begin{definition}
  For $k \in \N$, $\alpha \in (0,1)$ and $\delta \in \R$,
  define
  \begin{equation*}
    C^{k,\alpha}_\delta(V)
    :=
    \set*{
      f \in C^{k,\alpha}(V) : \Abs{f}_{C^{k,\alpha}_\delta} < \infty
    },
  \end{equation*}
  with
  \begin{equation*}
    \Abs{\cdot}_{C^{k,\alpha}_\delta} := \Abs{e^{\delta \ell}\cdot}_{C^{k,\alpha}},
  \end{equation*}
  and set
  \begin{equation*}
    C^\infty_\delta(V) := \bigcap_{k \in \N} C^{k,\alpha}_\delta(V).
  \end{equation*}
  Similarly, we define $C^{k,\alpha}_\delta(V,i\su(E,H))$ and $C^\infty_\delta(V,i\su(E,H))$.
\end{definition}

\begin{prop}
  \label{Prop_ScalarLaplacian}
  For $0 < \delta \ll_D 1$, the linear map $C^{k+2,\alpha}_\delta(V) \oplus \R \to C^{k,\alpha}_\delta(V)$ defined by
  \begin{equation*}
    (f,A) \mapsto \Delta f - A\Delta\ell
  \end{equation*}
  is an isomorphism.
\end{prop}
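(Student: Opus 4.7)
The plan is to apply standard weighted-space Fredholm theory for ACyl manifolds \cite{Lockhart1985,Mazya1978} to $\Delta$, identify the cokernel explicitly using the ACyl geometry, and verify that $\Delta \ell$ supplies the one missing direction needed to upgrade Fredholmness to an isomorphism.

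First, by the cited linear analysis, $\Delta\co C^{k+2,\alpha}_\delta(V) \to C^{k,\alpha}_\delta(V)$ is Fredholm whenever $\delta$ avoids the discrete set of critical weights; these are determined by the indicial roots of the model operator $\Delta_\infty = -\partial_\ell^2 - \partial_\theta^2 + \Delta_D$, namely $\pm \sqrt{\mu}$ for $\mu$ in the spectrum of $-\partial_\theta^2 + \Delta_D$ on $S^1 \times D$. The smallest positive critical weight $\delta_0$ is determined by $D$, and I fix $0 < \delta < \delta_0$.

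Next I would identify the kernel and the cokernel. If $f \in C^{k+2,\alpha}_\delta(V)$ satisfies $\Delta f = 0$, integration by parts gives $\int_V \abs{\nabla f}^2 = 0$ (boundary terms at infinity vanish by the exponential decay of $f$), so $f \equiv 0$. For the cokernel: by duality between $C^{k,\alpha}_\delta$ and its weighted counterpart, an element is represented by a harmonic function $h$ on $V$ growing at most like $e^{\delta \ell}$. Since $\delta < \delta_0$, any such $h$ admits an asymptotic expansion $h = a + b\ell + O(e^{-\epsilon \ell})$ for some $a, b \in \R$ and $\epsilon > 0$. Integrating $\Delta h = 0$ over $V_L$ and letting $L \to \infty$ yields $0 = -b \cdot \vol(S^1 \times D)$, forcing $b = 0$; then $h$ is bounded and harmonic, hence constant by the maximum principle. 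Thus the cokernel is one-dimensional, spanned by the functional $g \mapsto \int_V g\,\dvol$.

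Finally, I would show $\Delta \ell$ hits this cokernel and deduce the isomorphism. Stokes' theorem gives $\int_V \Delta\ell\,\dvol = -\lim_{L\to\infty}\int_{\{\ell = L\}} \partial_\ell \ell\,\dA = -\vol(S^1\times D) \neq 0$, so $\Delta\ell$ lies outside the image of $\Delta$. Given $g \in C^{k,\alpha}_\delta(V)$, the unique choice $A := \vol(S^1 \times D)^{-1} \int_V g\,\dvol$ makes $g + A \Delta\ell$ have vanishing integral, hence lies in the image of $\Delta$, producing $f \in C^{k+2,\alpha}_\delta(V)$ with $\Delta f - A\Delta\ell = g$; uniqueness of $f$ follows from the triviality of the kernel. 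Injectivity of the extended map is handled the same way: pairing $\Delta f = A \Delta \ell$ with the constant $1$ yields $A = 0$, and then the kernel triviality gives $f = 0$.

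The main obstacle is the identification of the cokernel as exactly one-dimensional. The indicial root $0$ contributes two polynomial solutions $\{1, \ell\}$ of the model equation, which would naively suggest a two-dimensional cokernel; the reduction to one dimension uses in an essential way that $V$ has only a single cylindrical end, so that the Stokes obstruction rules out nontrivial harmonic functions globally asymptotic to $b \ell$.
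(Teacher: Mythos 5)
Your proof is correct and follows essentially the same route as the paper, which simply cites \cite[Proposition 2.7]{Haskins2012} (itself built on the Lockhart--McOwen weighted Fredholm theory) and records the Stokes computation $\int_V \Delta \ell = -\vol(S^1 \times D)$. You have unpacked that citation into a complete argument---Fredholmness for subcritical $\delta$, trivial kernel by integration by parts, identification of the one-dimensional cokernel via the asymptotic expansion and the Stokes obstruction killing the linear mode---which is precisely the content of the cited result; the only point worth tightening is the passing appeal to ``duality'' of Hölder spaces, which is cleanest to justify by running the cokernel argument in weighted Sobolev spaces and then upgrading via Schauder regularity, as in \cite{Haskins2012}.
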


\begin{proof}
  This is \cite[Proposition 2.7]{Haskins2012} together with the observation that
  \begin{equation*}
    \int_V \Delta \ell = - \vol(S^1\times D).
    \qedhere
  \end{equation*}
\end{proof}

\begin{prop}
  \label{Prop_TraceFreeLaplacian}
  If $H_D$ is HYM, $\sE_D$ is simple and $\abs{\delta} \ll_{H_D} 1$, then the linear operator $\nabla_{H_0}^*\nabla_{H_0} \co C^{k+2,\alpha}_\delta(V,i\su(E,H)) \to C^{k,\alpha}_\delta(V,i\su(E,H))$ is Fredholm of index zero.
\end{prop}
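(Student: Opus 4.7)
The plan is to apply the standard Lockhart--McOwen/Maz'ya--Plamenevski\u{\i} Fredholm theory for translation-invariant elliptic operators on ACyl manifolds, exactly as the reference to \cite{Haskins2012,Mazya1978,Lockhart1985} in the text signals. Since $H$ is asymptotic to $H_D$, the Chern connection $A_H$ is asymptotic (in $C^\infty_{\delta}$, for $\delta$ at most $\min(\delta_\sE,\delta_H)$) to the translation-invariant Chern connection of $H_D$ pulled back to $(L_0,\infty)\times S^1\times D$. Consequently $\nabla_H^*\nabla_H$ is an asymptotically translation-invariant elliptic operator of order $2$ whose model operator on the cylinder $\R\times S^1\times D$ acting on sections of $i\su(E_D,H_D)$ is
\begin{equation*}
  L_\infty = -\partial_\ell^2 - \partial_\theta^2 + \nabla_{H_D}^*\nabla_{H_D}.
\end{equation*}

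The first step is then to identify the indicial roots. By separation of variables, $\delta\in\R$ is an indicial root of $L_\infty$ precisely when there exist $n\in\Z$ and an eigenvalue $\lambda\geq 0$ of $\nabla_{H_D}^*\nabla_{H_D}$ on $\Gamma(i\su(E_D,H_D))$ with $\delta^2 = n^2 + \lambda$. The key point is that $\lambda_1 > 0$: on the compact Kähler manifold $D$, a section $s\in\Gamma(i\su(E_D,H_D))$ with $\nabla_{H_D}^*\nabla_{H_D}s = 0$ is parallel (integration by parts), hence $\delbar_Ds = 0$, so $s$ defines a trace-free holomorphic endomorphism of $\sE_D$; simplicity of $\sE_D$ then forces $s\in\C\cdot\id_{E_D}$, and tracelessness forces $s=0$. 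The smallest indicial root is therefore $\delta_c := \sqrt{\lambda_1} > 0$, and the interval $(-\delta_c,\delta_c)$ is indicial-root-free. This is the meaning of the condition $|\delta|\ll_{H_D}1$.

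The second step is the Fredholm conclusion: Lockhart--McOwen theory \cite{Lockhart1985} (extended to Hölder spaces by standard elliptic regularity) states that an asymptotically translation-invariant elliptic operator is Fredholm on $C^{k+2,\alpha}_\delta\to C^{k,\alpha}_\delta$ exactly when $\delta$ is not an indicial root; by the previous step this applies for all $|\delta|<\delta_c$.

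The third step is to compute that the index is zero. Since $\nabla_H^*\nabla_H$ is formally self-adjoint with respect to the unweighted $L^2$-pairing, the Lockhart--McOwen duality $\operatorname{ind}(\delta) = -\operatorname{ind}(-\delta)$ holds; combined with constancy of the index on each component of the complement of the indicial set, we get $\operatorname{ind}(\delta) = -\operatorname{ind}(\delta)$ on $(-\delta_c,\delta_c)$, so $\operatorname{ind}(\delta) = 0$.

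The only non-routine ingredient is the vanishing of $\ker(\nabla_{H_D}^*\nabla_{H_D})$ on trace-free endomorphisms under simplicity; once that is in hand, everything else reduces to quoting the references. The main potential pitfall is ensuring the Hölder version of Lockhart--McOwen applies in our ACyl setup with the specific weight convention $\|\cdot\|_{C^{k,\alpha}_\delta}=\|e^{\delta\ell}\cdot\|_{C^{k,\alpha}}$, but this is standard.
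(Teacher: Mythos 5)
Your proof is correct and follows essentially the same route as the paper: identify the translation-invariant model operator $-\partial_\ell^2-\partial_\theta^2+\nabla_{H_D}^*\nabla_{H_D}$, show the cross-sectional operator has a spectral gap because simplicity of $\sE_D$ kills the kernel of $\nabla_{H_D}^*\nabla_{H_D}$ on trace-free endomorphisms, invoke Lockhart--McOwen/Haskins--Hein--Nordström for Fredholmness off the indicial set, and use formal self-adjointness plus the fact that $0$ is not a critical weight to get index zero. The only (harmless) difference is that you deduce $\ker\nabla_{H_D}^*\nabla_{H_D}=0$ by integration by parts and parallelism directly, whereas the paper passes through the Weitzenböck identity $\tfrac12\nabla_{H_D}^*\nabla_{H_D}=\delbar_{\sE_D}^*\delbar_{\sE_D}$ valid because $H_D$ is HYM.
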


\begin{proof}
  We use the theory explained in \cite[Section 2.1]{Haskins2012}.
  The linear operator $\nabla_{H_0}^*\nabla_{H_0}$ is asymptotic to the translation-invariant linear operator
  \begin{equation*}
    -\del_\ell^2 - \del_\theta^2 + \nabla_{H_D}^*\nabla_{H_D}
  \end{equation*}
  acting on sections of $i\su(E_\infty,H_\infty)$.
  Since $H_D$ is $\P$HYM,
  \begin{equation*}
    \frac12 \nabla_{H_D}^*\nabla_{H_D}
    = \del_{H_D}^*\del_{H_D}
    = \delbar_{\sE_D}^*\delbar_{\sE_D}.
  \end{equation*}
  The latter is invertible because $\sE_D$ is simple.
  Consequently, the spectrum of $-\del_\theta^2 + \nabla_{H_D}^*\nabla_{H_D}$ is contained in $[\lambda_D,\infty)$, for some $\lambda_D > 0$.
  This implies the Fredholm property for $\abs{\delta} < \sqrt{\lambda_D}$ by \cite[Proposition 2.4]{Haskins2012}.
  Since $\nabla_{H_D}^*\nabla_{H_D}$ is formally self-adjoint and $0$ is not a critical weight, the index is zero;
  cf.~\cite[Theorem 7.4]{Lockhart1985}.
\end{proof}

\subsection{Hermitian Yang--Mills metrics on line bundles}
\label{Sec_HYMLineBundles}

\begin{prop}
  \label{Prop_HYMLineBundles}
  Let $\sL$ be a line bundle asymptotic to $\sL_D$ and denote by $h_D$ a Hermitian metric on $\sL_D$ with
  \begin{equation*}
    i\Lambda F_{h_D}
    = \lambda
    := \frac{2\pi \cdot \deg(\sL_D)}{(n-2)! \cdot \vol(D)}.\footnote{Such
      a Hermitian metric exists and is unique up to multiplication by a positive constant.}
  \end{equation*}
  There exist a unique Hermitian metric $h_0$ asymptotic to $h_D$ and $A \in \R$ such that $h := h_0 e^{-A\ell}$ satisfies
  \begin{equation*}
    i\Lambda F_h = \lambda.
  \end{equation*}
\end{prop}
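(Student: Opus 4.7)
The plan is to write $h=h_1 e^{-f}$ for some auxiliary reference metric $h_1$ asymptotic to $h_D$ and a real function $f$, and to reduce the $\P$HYM equation to a scalar Laplace-type equation on $V$ that can be solved using \autoref{Prop_ScalarLaplacian}.

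First I would construct $h_1$ by pulling $h_D$ back along $\bar\pi$ on the end $V\setminus V_{L_0}$ and smoothly extending to an arbitrary positive metric on the compact part. Since $h_1$ is asymptotic to $h_D$, the defect
\begin{equation*}
  \phi := \lambda - i\Lambda F_{h_1}
\end{equation*}
lies in $C^\infty_{\delta'}(V)$ for some $\delta' > 0$ (any $\delta'$ smaller than the decay rates of the ACyl data).

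Next, since $\sL$ has rank one, the conformal change $h_1 \mapsto h_1 e^{-f}$ shifts the trace of the curvature by a multiple $c_n\Delta f$ of the (Hodge) Laplacian. Thus solving $i\Lambda F_h = \lambda$ is equivalent to the scalar equation $c_n\Delta f = \phi$. I would search for $f$ in the form
\begin{equation*}
  f = -A\ell + u, \qquad A \in \R,\quad u \in C^{\infty}_\delta(V),
\end{equation*}
with $0 < \delta \ll_D 1$. Then $c_n\Delta f = c_n(\Delta u - A\Delta \ell)$, and \autoref{Prop_ScalarLaplacian} provides a unique pair $(u,A)$ solving this equation. Setting $h_0 := h_1 e^{-u}$ gives a metric asymptotic to $h_D$ (since $u$ decays exponentially together with all its derivatives), and $h = h_0 e^{-A\ell}$ satisfies $i\Lambda F_h = \lambda$ by construction. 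Uniqueness of $(h_0,A)$ also follows from \autoref{Prop_ScalarLaplacian}: if $(h_0',A')$ is another solution, then $v := \log(h_0'/h_0) \in C^\infty_\delta(V)$ and $\Delta v - (A'-A)\Delta\ell = 0$, which forces $v \equiv 0$ and $A = A'$.

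The main conceptual point (rather than a technical obstacle) is that unlike in the compact case one cannot hope to solve $\Delta f = \phi$ with a decaying $f$: the integral of $\phi$ is obstructed, and on the cylindrical end harmonic functions grow at most linearly. The extra term $-A\ell$ with its single real parameter absorbs this obstruction, and the identity $\int_V \Delta\ell = -\vol(S^1\times D)$ from the proof of \autoref{Prop_ScalarLaplacian} is precisely what pins down $A$ from the topological data $\deg(\sL_D)$, which is consistent with the formula for $\lambda$. This is also the structural reason why a $\P$HYM metric on a non-trivial line bundle cannot, in general, be chosen asymptotically translation-invariant.
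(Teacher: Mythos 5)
Your proposal is the same proof as the paper's: choose a reference metric asymptotic to $h_D$, observe that the defect $\lambda - i\Lambda F$ lies in $C^\infty_\delta(V)$, and invoke \autoref{Prop_ScalarLaplacian} to solve the resulting scalar equation with the ansatz $f - A\ell$, with $\int_V \Delta\ell = -\vol(S^1\times D)$ pinning down $A$. One small slip in bookkeeping: with your ansatz $f = -A\ell + u$ and $h = h_1 e^{-f}$ you actually get $h = (h_1 e^{-u})\,e^{+A\ell} = h_0 e^{A\ell}$, not $h_0 e^{-A\ell}$; this is harmless (rename $A \mapsto -A$), but the signs should be made consistent before the final assembly.
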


\begin{proof}
  Let $h_{-1}$ be any Hermitian metric asymptotic to $h_D$.
  We have
  \begin{equation*}
    \lambda - i\Lambda F_{h_{-1}} \in C^\infty_\delta(V).
  \end{equation*}
  By \autoref{Prop_ScalarLaplacian} there is a unique pair $f \in C^\infty_\delta(V)$ and $A \in \R$ such that
  \begin{equation*}
    \Delta(f-A\ell) = \lambda - i\Lambda F_{h_{-1}}.
  \end{equation*}
  The proposition follows with $h_0 := h_{-1}e^f$.
\end{proof}

The number $A(\sL)$ defined by \autoref{Prop_HYMLineBundles} is an invariant of the asymptotically translation-invariant line bundle $\sL$.
It can be computed as
\begin{equation*}
  A(\sL) := \frac{1}{\vol(S^1\times D)}\int_V \lambda - i\Lambda F_h
\end{equation*}
with $h$ denoting \emph{any} Hermitian metric asymptotic to some $h_D$ as in \autoref{Prop_HYMLineBundles}.
It is closely related to the first Chern class:
if $\sL_1$ and $\sL_2$ are both asymptotic to $\sL_D$, then $c_1(\sL_1) - c_1(\sL_2) \in H^2_c(V)$ and
\begin{equation*}
  A(\sL_1) - A(\sL_2) =
    \frac{
      2\pi \cdot \inner{\(c_1(\sL_1)-c_1(\sL_2)\)\cup[\omega]^{n-1}}{[V]}
    }{
      (n-1)!\cdot\vol(S^1\times D)
    }.
\end{equation*}

It follows from the above that $\sE$ as in \autoref{Thm_ACylDUY} admits an asymptotically translation invariant HYM metric if and only if $A(\det \sE) = 0$.


\section{The Uhlenbeck--Yau continuity method}
\label{Sec_UYContinuityMethod}

In this section we begin the proof of \autoref{Thm_ACylDUY} in the  case when $\sE$ is a holomorphic vector bundle.
We use the continuity method introduced by \citet{Uhlenbeck1986}; see also Lübke and Teleman's beautiful books \cites{Luebke1995,Luebke2006}.

We fix some
\begin{equation*}
  0 < \delta <  \min \set*{ \delta_V, \delta_\sE, \sqrt{\lambda_D} }
\end{equation*}
and will shortly construct a background Hermitian metric $H_0$ on $\sE$ which is asymptotically translation-invariant and satisfies
\begin{equation}
  \label{Eq_KH0Decay}
  K_{H_0} \in C^\infty_\delta(V,i\su(E,H_0)).
\end{equation}
Given such an $H_0$, we define a map 
\begin{equation*}
  \fL \co
    C^\infty_\delta(V,i\su(E,H_0)) \times [0,1]
    \to
    C^\infty_\delta(V,i\su(E,H_0))
\end{equation*}
by
\begin{equation*}
  \fL({s},t) :=
    \Ad(e^{{s}/2})K_{H_0e^{s}}
    +
    t\cdot {s}.
\end{equation*}
Set
\begin{equation*}
  I :=
    \set*{
      t \in [0,1] :
      \fL({s},t) = 0
      \text{ for some }
      {s} \in C^\infty_\delta(V,i\su(E,H_0))
    }.
\end{equation*}
We will show that $1 \in I$, $I \cap (0,1]$ is open and $I$ is closed; hence, $I = [0,1]$.
Since $\fL(s,0)=0$ precisely means that $H=H_0e^s$ satisfies \eqref{Eq_PHYM}, this will prove \autoref{Thm_ACylDUY} when $\sE$ is a holomorphic vector bundle.

\begin{prop}
  \label{Prop_LuebkeTelemanTrick}
  There exists an asymptotically translation-invariant Hermitian metric $H_0$ on $\sE$ satisfying \eqref{Eq_KH0Decay}, and there exists an $s \in C^\infty_\delta(V,i\su(E,H_0))$ such that $\fL({s},1) = 0$.
\end{prop}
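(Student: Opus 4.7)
The proof has two halves, matching the two assertions.

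\textbf{Step 1 (Constructing $H_0$).} Because $\sE_D$ is $\mu$-stable over the compact Kähler base $D$, the classical Donaldson--Uhlenbeck--Yau theorem produces a PHYM Hermitian metric $H_D$ on $\sE_D$, i.e.\ $K_{H_D}=0$. Lift $H_D$ via projection to the translation-invariant metric $H_\infty$ on $\sE_\infty$ over the model end, and pull it back through the isomorphism $\bar\pi$ of \autoref{Def_ATIReflexiveSheaf} to a smooth Hermitian metric on $E|_{V\setminus V_{L_0}}$. Since in this section $\sE$ is a genuine holomorphic vector bundle, patch this with an arbitrary auxiliary Hermitian metric on the compact interior (a partition of unity taken inside the cone of positive Hermitian forms) to produce an asymptotically translation-invariant $H_0$. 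At infinity, the difference
\begin{equation*}
  i\Lambda F_{H_0} - i\Lambda_\infty F_{H_\infty}
\end{equation*}
is controlled by the exponential approximations $\bar\pi_*\delbar - \delbar_\infty = O(e^{-\delta_\sE\ell})$ and $\omega - \omega_\infty = O(e^{-\delta_V\ell})$ from \autoref{Def_ACylKahler} and \autoref{Def_ATIReflexiveSheaf}; combined with $K_{H_\infty}=0$ and the scalar correction in the definition of $K$, this gives $K_{H_0}\in C^\infty_\delta(V,i\su(E,H_0))$ for our fixed $\delta$.

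\textbf{Step 2 (Solving $\fL(s,1)=0$).} Regard $\fL(\cdot,1)$ as a smooth map $C^{k+2,\alpha}_\delta(V,i\su(E,H_0))\to C^{k,\alpha}_\delta(V,i\su(E,H_0))$. Its linearization at $s=0$ has the form
\begin{equation*}
  L_1\eta = \tfrac{1}{2}\nabla_{H_0}^*\nabla_{H_0}\eta + \eta + \tfrac{1}{2}[\eta,K_{H_0}],
\end{equation*}
where the first two terms come from differentiating $K_{H_0 e^s}$ and the $+\,s$ summand, and the last comes from the $\Ad(e^{s/2})$ factor. By \autoref{Prop_TraceFreeLaplacian}, $\nabla_{H_0}^*\nabla_{H_0}$ is Fredholm of index zero on these weighted spaces; the zeroth-order perturbation $+\id$ is pointwise positive, and the commutator term vanishes against $\eta$ in the $L^2$-pairing because $\tr([\eta,K_{H_0}]\eta)=0$ for self-adjoint $\eta,K_{H_0}$. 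Hence $L_1$ has trivial kernel and is an isomorphism.

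To produce an actual solution, the approach would be variational: minimize
\begin{equation*}
  J(s) := M(H_0, H_0 e^s) + \tfrac{1}{2}\int_V \abs{s}_{H_0}^2\, d\vol_g
\end{equation*}
on a weighted Sobolev completion of $C^\infty_\delta(V,i\su(E,H_0))$, where $M$ is the Donaldson functional. Convexity of $M$ along linear paths and strict convexity of the quadratic term make $J$ strictly convex, and the $L^2$ term enforces coercivity; a unique minimizer satisfies $\fL(s,1)=0$ weakly, and elliptic regularity via $L_1$ upgrades it to $C^\infty_\delta$. A parallel argument runs a secondary continuity method in $\tau\in[1,\infty)$ for $\fL(s,\tau)=0$: the approximate solution $s_\tau\sim-K_{H_0}/\tau$ at $\tau\gg 1$ is promoted by IFT to an exact one; openness along $\tau\geq 1$ follows from invertibility of $L_\tau := L_1+(\tau-1)\id$; and closedness reduces to a uniform $C^0$-bound on $s_\tau$, obtainable at an interior maximum of $\abs{s_\tau}^2$ since the sign structure at $t=\tau\geq 1$ is favorable.

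\textbf{Main obstacle.} The principal difficulty is making either scheme work in the exponentially weighted spaces on the non-compact $V$: one must show that $M(H_0, H_0 e^s)$ is a well-defined, convex, coercive functional on $W^{1,2}_\delta$, or, equivalently, extract a $\tau$-independent $L^\infty$-estimate for $s_\tau$. Both hinge on the fact that $H_D$ being PHYM kills the leading cylindrical behaviour, leaving only subleading terms that the exponential weight can absorb; keeping track of the boundary contributions at $D_L$ as $L\to\infty$, and ruling out drift of $s_\tau$ along the end, is the technically delicate point.
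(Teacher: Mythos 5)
The paper does not solve the equation $\fL(s,1)=0$ by any PDE or variational argument at all; it uses the Lübke--Teleman trick, which reduces the assertion to algebra. What you construct in Step~1 is essentially the metric the paper calls $H_{-1}$: patch the pullback of the PHYM metric $H_D$ at the end with an arbitrary metric on a compact piece, so that $\kappa := K_{H_{-1}}\in C^\infty_\delta(V,i\su(E,H_{-1}))$. The key move you are missing is to then \emph{redefine}
\begin{equation*}
  H_0 := H_{-1}\,e^{\kappa},
\end{equation*}
which is still asymptotic to $H_D$ and still satisfies \eqref{Eq_KH0Decay}. With this $H_0$, the element $s := -\kappa$ is an explicit solution of $\fL(s,1)=0$, since $H_0 e^{-\kappa} = H_{-1}$ gives
\begin{equation*}
  \fL(-\kappa,1) = \Ad(e^{-\kappa/2})K_{H_0 e^{-\kappa}} - \kappa = \Ad(e^{-\kappa/2})\kappa - \kappa = 0,
\end{equation*}
the last equality because $\kappa$ commutes with $e^{-\kappa/2}$. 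No analysis beyond the decay of $K_{H_{-1}}$ is needed.

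Your Step~2 is therefore both unnecessary and incomplete: the variational and secondary-continuity schemes you sketch would require exactly the kind of a~priori $C^0$ bound and coercivity estimates that the remainder of the paper (\autoref{Sec_APrioriEstimate}) is devoted to establishing, so the argument as written is circular --- you cannot invoke them to produce the starting point of the continuity path. You yourself flag this as the ``main obstacle,'' which is the honest admission that the proof is not finished. (Separately, your linearization at $s=0$ contains an extraneous commutator term $\tfrac12[\eta,K_{H_0}]$; by \autoref{Prop_DerivativeKH0ExpS} the linearization of $\fK$ at $s=0$ is exactly $\tfrac12\nabla_{H_0}^*\nabla_{H_0}$, with the would-be commutator terms cancelling against those coming from the Kähler identities. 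This does not change your invertibility conclusion, but it is not the correct formula.)
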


\begin{proof}
  We use a trick discovered by \citet[Lemma 3.2.1]{Luebke1995}.
  By the Donaldson--Uhlenbeck--Yau theorem \cites{Donaldson1985,Uhlenbeck1986,Donaldson1986} there exists a $\P$HYM metric $H_D$ on $\sE_D$.
  One can easily construct a Hermitian metric $H_{-1}$ asymptotic to $H_D$ (at rate $\delta_{H_{-1}} = \delta$) which satisfies
  \begin{equation*}
    \kappa := K_{H_{-1}} \in C^\infty_\delta(V,i\su(E,H_{-1})).
  \end{equation*}

  The Hermitian metric
  \begin{equation*}
    H_0 := H_{-1}e^{\kappa}
  \end{equation*}
  is asymptotic to $H_D$ (at rate $\delta_{H_0} = \delta$);
  moreover, we have \eqref{Eq_KH0Decay},
  and $\kappa \in C^\infty_\delta(V,i\su(E,H_0))$ satisfies
  \begin{equation*}
    \fL(-\kappa,1)
    = \Ad(e^{-\kappa/2})(K_{H_{-1}}) - \kappa
    = 0. 
    \qedhere
  \end{equation*}
\end{proof}


\section{Linearising \texorpdfstring{$\fL = 0$}{L = 0}}
\label{Sec_Linearisation}

Having just proved that $1 \in I$, the next step is to show that $I \cap (0,1]$ is open.
This will be established in this section by linearising the equation $\fL = 0$.

Since
\begin{equation*}
  \fL(s,t)
  = \Ad(e^{s/2})\(K_{H_0} + i\Lambda\delbar(e^{-s}\del_{H_0}e^s)\) + t\cdot s,
\end{equation*}
it extends to a smooth map
\begin{equation*}
  \fL\co C^{2,\alpha}_\delta(V,i\su(E,H_0)) \times [0,1] \to C^{0,\alpha}_\delta(V,i\su(E,H_0)).
\end{equation*}
The fact that $I \cap (0,1]$ is open is an immediate consequence of the following two propositions and the implicit function theorem for Banach spaces;
see, e.g., \cite[Theorem A.3.3]{McDuff2012}.

\begin{prop}
  \label{Prop_Invertibility}
  If $(s,t) \in C^{2,\alpha}_\delta(V,i\su(E,H_0)) \times (0,1]$ is a solution of $\fL(s,t) = 0$,
  then the linearisation
  \begin{equation*}
    L_{s,t} := \frac{\rd \fL}{\rd s}(s,t) \co C^{2,\alpha}_\delta(V,i\su(E,H_0)) \to C^{0,\alpha}_\delta(V,i\su(E,H_0))
  \end{equation*}
  is invertible.
\end{prop}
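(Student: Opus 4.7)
The plan is to write $L_{s,t}$ in the form $P_s + t\cdot\id$ with $P_s$ a nonnegative formally self-adjoint second-order elliptic operator, and then deduce invertibility from Fredholmness of index zero combined with injectivity (the latter via a positivity argument exploiting $t > 0$). First I would compute $L_{s,t}$: differentiating $\fL(s,t) = \Ad(e^{s/2})K_{H_0 e^s} + t\,s$ and using the standard formula for the variation of the Chern curvature along the family $H_\epsilon := H_0 e^{s+\epsilon\sigma}$, the $\Ad(e^{s/2})$--conjugation terms combine with the curvature variation so that, after the natural identification of $i\su(E,H_0)$ with $i\su(E,H)$ via $e^{s/2}$ (where $H := H_0 e^s$), one arrives at
\begin{equation*}
  L_{s,t}\sigma = P_s\sigma + t\sigma,
  \qquad
  P_s := \tfrac{1}{2}\nabla_H^*\nabla_H
\end{equation*}
on trace-free Hermitian endomorphisms. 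This is essentially the Lübke--Teleman linearisation carried out in the compact case \cite{Luebke1995}.

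Next I would establish that $L_{s,t}\co C^{2,\alpha}_\delta(V,i\su(E,H_0)) \to C^{0,\alpha}_\delta(V,i\su(E,H_0))$ is Fredholm of index zero. Because $s \in C^{2,\alpha}_\delta$ and $H_0$ is asymptotic to $H_D$ at rate $\delta$, $L_{s,t}$ is asymptotic at the cylindrical end to the translation-invariant model
\begin{equation*}
  M_t := \tfrac{1}{2}\bigl(-\partial_\ell^2 - \partial_\theta^2 + \nabla^*_{H_D}\nabla_{H_D}\bigr) + t\cdot\id
\end{equation*}
acting on $i\su(E_\infty,H_\infty)$. Separating variables, the indicial roots of $M_t$ are $\pm\sqrt{\mu+2t}$ as $\mu$ ranges over the spectrum of $-\partial_\theta^2 + \nabla^*_{H_D}\nabla_{H_D}$ on $S^1\times D$, the smallest of which is $\sqrt{\lambda_D+2t} \geq \sqrt{\lambda_D} > \delta$. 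Hence \cite[Proposition 2.4]{Haskins2012} applies and gives the Fredholm property. Continuously deforming $t$ to $0$ and $s$ to $0$, no indicial root crosses $\pm\delta$, so the index of $L_{s,t}$ equals that of $L_{0,0}$. The latter is a compact perturbation of $\tfrac{1}{2}\nabla^*_{H_0}\nabla_{H_0}$ (the difference being a zero-order commutator with $K_{H_0} \in C^\infty_\delta$), hence has index zero by \autoref{Prop_TraceFreeLaplacian}.

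Finally, injectivity follows from positivity. If $\sigma \in C^{2,\alpha}_\delta(V,i\su(E,H_0))$ solves $L_{s,t}\sigma = 0$, then $\sigma$ and $\nabla_H\sigma$ decay exponentially at rate $\delta > 0$, so integration by parts over $V$ produces no boundary term and
\begin{equation*}
  0 = \inner{L_{s,t}\sigma}{\sigma}_{L^2_H} = \tfrac{1}{2}\Abs{\nabla_H\sigma}_{L^2}^2 + t\Abs{\sigma}_{L^2}^2,
\end{equation*}
forcing $\sigma \equiv 0$ since $t > 0$. Together with Fredholmness of index zero, this yields invertibility. The main technical obstacle is the linearisation computation of the first paragraph: one must track the $\Ad(e^{s/2})$--conjugation carefully so that its zero-order contributions combine with the lower-order part of the variation of $K_{H_0 e^s}$ to leave behind the clean self-adjoint form $P_s + t\cdot\id$. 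Once that identity is in hand, Fredholmness and injectivity are routine ACyl linear analysis.
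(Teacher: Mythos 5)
The key claim in your first paragraph---that after a natural $e^{s/2}$--identification the linearisation becomes the clean self-adjoint operator $\tfrac12\nabla_H^*\nabla_H + t\cdot\id$---is not correct, and this breaks the injectivity argument. What actually comes out of \autoref{Prop_DerivativeKH0ExpS} is
\begin{equation*}
  L_{s,t}\hat s
    = \tfrac12\,\nabla_{\tilde A_s}^*\nabla_{\tilde A_s}\,T\hat s + t\,\hat s,
  \qquad
  T := \Ad(e^{s/2})\Upsilon(-s),
\end{equation*}
where $\Upsilon$ is as in \eqref{Eq_Upsilon}. The reason the zero-order factor $T$ does not cancel is that the parametrisation is $H = H_0 e^{s}$ with $s$ varying \emph{additively} in the exponent; the derivative of $s\mapsto e^{s}$ along $\hat s$ produces the $\Upsilon$--factor, and the conjugation by $e^{s/2}$ built into $\fL$ does not undo it. Consequently $L_{s,t}$ is not self-adjoint for $s\neq 0$: it is of the form $\tfrac12\nabla^*\nabla\circ T + t$, and your identity
\begin{equation*}
  0 = \inner{L_{s,t}\sigma}{\sigma} = \tfrac12\Abs{\nabla\sigma}_{L^2}^2 + t\Abs{\sigma}_{L^2}^2
\end{equation*}
is not available.

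The fix, which is what the paper does, is to pair against $T\hat s$ rather than $\hat s$: since $\nabla_{\tilde A_s}^*\nabla_{\tilde A_s}$ is nonnegative and $T$ is self-adjoint with $T\geq 1$ (\autoref{Prop_UpsilonHatSS}), one gets
\begin{equation*}
  \int_V \inner{L_{s,t}\hat s}{T\hat s}
    = \tfrac12\int_V \abs{\nabla_{\tilde A_s}T\hat s}^2 + t\int_V \inner{T\hat s}{\hat s}
    \geq t\int_V\abs{\hat s}^2,
\end{equation*}
which forces $\hat s = 0$. (Equivalently, $TL_{s,t} = \tfrac12 T\nabla^*\nabla T + tT$ is self-adjoint and strictly positive; the $T$'s do not disappear, they merely arrange themselves symmetrically.) Your Fredholm and index-zero computation via deformation to the translation-invariant model and comparison with $\tfrac12\nabla_{H_0}^*\nabla_{H_0}$ is sound and essentially matches the paper; it is only the positivity step that needs repairing as above.
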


\begin{prop}
  \label{Prop_Regularity}
  If $(s,t) \in C^{2,\alpha}_\delta(V,i\su(E,H_0)) \times [0,1]$ is a solution of $\fL(s,t) = 0$,
  then $s \in C^\infty_\delta(V,i\su(E,H_0))$.
\end{prop}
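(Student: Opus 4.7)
The plan is to treat the equation $\fL(s,t)=0$ as a second-order quasi-linear elliptic PDE for $s$ and to bootstrap with Schauder estimates, first on compact subsets to obtain $C^\infty$ regularity, then locally on unit balls translated along the cylindrical end to promote this to the weighted statement $s\in C^\infty_\delta(V,i\su(E,H_0))$.

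First I would rewrite $\fL(s,t)=0$ in a form that exposes its principal symbol. Using
\[
  \Ad(e^{s/2})\,i\Lambda\delbar(e^{-s}\del_{H_0}e^{s})
  = i\Lambda\bar\partial \partial_{H_0} s + Q(s,\nabla s),
\]
where $Q$ depends smoothly on $s$ and is quadratic in $\nabla s$ (this is the standard computation of the Kähler identity for the linearization of $K_H$ in $s$, cf.\ \cite[Ch.~6]{Luebke1995}), the equation $\fL(s,t)=0$ becomes
\[
  \tfrac12 \Delta_{H_0} s + Q(s,\nabla s) + \Ad(e^{s/2})K_{H_0} + t\cdot s = 0.
\]
The principal part is $\tfrac12 \Delta_{H_0}$, which is elliptic with smooth coefficients; the lower-order terms depend smoothly on $s$ and $\nabla s$, and the inhomogeneity $K_{H_0}$ lies in $C^\infty_\delta$ by \eqref{Eq_KH0Decay}.

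Next I would run a Schauder bootstrap on a relatively compact open set $U\Subset V$. With $s\in C^{2,\alpha}$, the equation can be viewed as a linear elliptic equation
\[
  \tfrac12\Delta_{H_0} s = F(x,s,\nabla s),
\]
whose right-hand side is $C^{0,\alpha}$ because $s,\nabla s\in C^{1,\alpha}$ and $F$ is smooth in its arguments; interior Schauder gives $s\in C^{2,\alpha}_{\loc}$, which upgrades the right-hand side to $C^{1,\alpha}$, yielding $s\in C^{3,\alpha}_{\loc}$, and so on. Iterating, $s\in C^\infty_{\loc}$.

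Finally, to obtain the weighted conclusion I would use the bounded geometry of the cylindrical end. Choose unit geodesic balls $B_p\subset V$ centred at points $p$ with $\ell(p)\to\infty$; in $\pi$-coordinates these balls are uniformly biholomorphic (and uniformly quasi-isometric via $g_\infty$) to a fixed model ball, with Schauder constants uniform in $p$. The interior Schauder inequality
\[
  \Abs{s}_{C^{k+2,\alpha}(B_p(1/2))}
  \lesssim
  \Abs{F(\cdot,s,\nabla s)}_{C^{k,\alpha}(B_p(1))} + \Abs{s}_{C^0(B_p(1))}
\]
combined with the already established decay $s\in C^{2,\alpha}_\delta$, the decay $K_{H_0}\in C^\infty_\delta$, and the smoothness of $F$ in its arguments, shows inductively in $k$ that all higher derivatives are $O(e^{-\delta\ell})$. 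Hence $s\in C^\infty_\delta(V,i\su(E,H_0))$.

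The only non-routine point is verifying once and for all that $\Ad(e^{s/2})i\Lambda\delbar(e^{-s}\del_{H_0}e^{s})$ has principal symbol $\tfrac12\Delta_{H_0}$ and that all the remainder terms are polynomials in $s,\nabla s$ with coefficients that are smooth in $s$; once this algebraic fact is in place, the analytic steps are standard Schauder theory on a manifold of bounded geometry, and I do not anticipate a serious obstacle.
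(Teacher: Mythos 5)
Your proof takes the same route as the paper: rewrite $\fL(s,t)=0$ as a quasi-linear second-order elliptic equation for $s$ with $C^\infty_\delta$ inhomogeneity, then bootstrap with interior Schauder estimates, using the bounded geometry of the cylindrical end to propagate the $C^{2,\alpha}_\delta$ decay to all higher derivatives. One small caveat on the algebra: by \autoref{Prop_KH0ExpS} the leading term is $\tfrac12\Theta(s)\nabla_{H_0}^*\nabla_{H_0}s$ with $\Theta(s)=\tfrac12\bigl(\Upsilon(s/2)+\Upsilon(-s/2)\bigr)$, so your displayed identity $\Ad(e^{s/2})\,i\Lambda\delbar(e^{-s}\del_{H_0}e^{s}) = i\Lambda\bar\partial \partial_{H_0} s + Q(s,\nabla s)$ with $Q$ free of second derivatives is not literally correct; but $\Theta(s)$ is self-adjoint with lowest eigenvalue at least one and depends smoothly on $s$, so multiplying through by $\Theta(s)^{-1}$ brings the equation to the schematic form $\bigl(\tfrac12\nabla_{H_0}^*\nabla_{H_0}+t\bigr)s + B(\nabla_{H_0}s\otimes\nabla_{H_0}s) = C(K_{H_0})$ used in the paper, and the bootstrap goes through unchanged.
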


The proofs of both of these results are essentially identical to those of the analogous results in the compact setting; see \cite[Lemma 4.6 and Lemma 4.8]{Luebke2006}.
The proofs make use of the explicit formulae for $\Ad(e^{s/2})K_{H_0e^s}$ and its derivative in the direction of $s$.
The derivation of these, while rather straight-forward, is somewhat tedious and therefore relegated to \autoref{Sec_UsefulFormulae}.

\begin{proof}[Proof of \autoref{Prop_Regularity}]
  By \autoref{Prop_KH0ExpS}, the equation $\fL(s,t) = 0$ is equivalent to
  \begin{equation*}
    \(\frac12\nabla_{H_0}^*\nabla_{H_0}+t\) s
    + B(\nabla_{H_0} s\otimes \nabla_{H_0} s) 
    = C(K_{H_0}).
  \end{equation*}
  where $B$ and $C$ are linear with coefficients depending on $s$, but not on its derivatives.
  The result now follows from a standard elliptic bootstrapping procedure.
\end{proof}

\begin{proof}[Proof of \autoref{Prop_Invertibility}]
  By \autoref{Prop_DerivativeKH0ExpS}, the linear operator $L_{s,t}$ is given by
  \begin{equation*}
    L_{s,t}\hat s
    = \frac12\nabla_{\tilde A_s}^*\nabla_{\tilde A_s} \Ad(e^{s/2})\Upsilon(-s)\hat s + t \hat s
  \end{equation*}
  with $\Upsilon$ as defined in \eqref{Eq_Upsilon}.
  The linear operator $L_{s,t}$ can be joined to $\frac12\nabla_{H_0}^*\nabla_{H_0} + t$ by a path of bounded linear operators which are asymptotic to $\frac12$--times $-\del_\ell^2 - \del_\theta^2 + \nabla_{H_D}^*\nabla_{H_D} + 2t$.
  The argument in the proof of \autoref{Prop_TraceFreeLaplacian} shows that this is a path of Fredholm operators.
  Therefore, the index of $L_{s,t}$ agrees with that of $\frac12\nabla_{H_0}^*\nabla_{H_0} + t$ and thus vanishes.
  By \autoref{Prop_UpsilonHatSS}, we have
  \begin{equation*}
    \int_V \inner{L_{s,t} \hat s}{\Ad(e^{s/2})\Upsilon(-s)\hat s}
    \geq t \int_V \abs{s}^2;
  \end{equation*}
  hence, $L_{s,t}$ has trivial kernel and thus is invertible.
\end{proof}


\section{A priori estimate}
\label{Sec_APrioriEstimate}

Given the following a priori estimate, it is an immediate consequence of Arzelà--Ascoli that $I$ is closed.

\begin{prop}
  \label{Prop_APrioriEstimate}
  If $(s,t) \in C^\infty_\delta(V,i\su(E,H_0)) \times [0,1]$ satisfies $\fL(s,t) = 0$,
  then
  \begin{equation*}
    \Abs{s}_{C^{k,\alpha}_\delta} \leq c_{k,\alpha}.
  \end{equation*}
\end{prop}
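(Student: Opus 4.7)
The plan is to reduce the full weighted $C^{k,\alpha}_\delta$ bound to an a priori $C^0$ bound, and then carry out the latter via the two-step strategy sketched in the introduction. Once $\Abs{s}_{C^0}$ is controlled, the quasi-linear equation
\begin{equation*}
  \bigl(\tfrac12\nabla_{H_0}^*\nabla_{H_0}+t\bigr)s + B(\nabla_{H_0}s\otimes\nabla_{H_0}s) = C(K_{H_0})
\end{equation*}
appearing in the proof of \autoref{Prop_Regularity} has bounded coefficients, so Schauder estimates on unit balls give interior $C^{k,\alpha}$ bounds uniform in $t$. To promote boundedness to decay at rate $\delta$ along the cylindrical end, I would apply the linear theory of \autoref{Sec_LinearAnalysis}: $s$ satisfies a uniformly elliptic equation whose translation-invariant model agrees, up to a bounded zero-order perturbation, with the operator in \autoref{Prop_TraceFreeLaplacian}, which has first positive indicial weight $\sqrt{\lambda_D}>\delta$. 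Combined with $\abs{K_{H_0}}\in C^\infty_\delta(V)$, this will force $s\in C^{k,\alpha}_\delta$.

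The bulk of the work is therefore the $C^0$ bound. A Kato-type computation standard in Donaldson--Uhlenbeck--Yau theory turns $\fL(s,t)=0$ into the sub-harmonic inequality
\begin{equation*}
  \Delta\abs{s} + 2t\abs{s} \leq 2\abs{K_{H_0}}.
\end{equation*}
In the compact setting one would simply solve $\Delta f = 2\abs{K_{H_0}}$ and use $f$ as a barrier. Here \autoref{Prop_ScalarLaplacian} only delivers a pair $(f_0,A)\in C^\infty_\delta(V)\times\R$ with $\Delta(f_0-A\ell)=2\abs{K_{H_0}}$, so the natural barrier grows linearly along the cylinder. Following Sá Earp, I would nevertheless use $f_0-A\ell$ on truncations $V_L$, combine it with Moser iteration applied to the sub-harmonic inequality for $\abs{s}$, and deduce
\begin{equation*}
  \Abs{s}_{L^\infty(V)}^3 \leq c\bigl(\Abs{s}_{L^2(V)}^2 + 1\bigr),
\end{equation*}
with $c$ independent of $t$ and of the truncation.

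Closing the loop requires the reverse inequality $\Abs{s}_{L^2}\leq c(\Abs{s}_{L^\infty}+1)$, and this is the main obstacle; it is also the only place (beyond the construction of $H_0$ in \autoref{Prop_LuebkeTelemanTrick}) where the $\mu$-stability of $\sE_D$ is used. On each transverse slice $D_z$ from \eqref{Eq_Dz}, the restriction of $H_0e^s$ is a Hermitian metric on $\sE_D$; estimating the Donaldson functional of this metric above by $\Abs{s}_{L^\infty(D_z)}$ and below by $c_0\Abs{s}_{L^2(D_z)}-c_1$ --- the lower bound being the standard consequence of $\mu$-stability that the Donaldson functional is proper modulo constants --- and then integrating along $\ell$ should give the desired $L^2$ control, with a compact analogue on $V_{L_0}$ handling the interior. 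The delicate point will be absorbing cross-terms coming from differentiation in the cylindrical direction into the slice estimates, which requires the exponential decay of both $K_{H_0}$ and $\bar\pi_*\delbar-\delbar_\infty$ built into \autoref{Def_ACylKahler} and \autoref{Def_ATIReflexiveSheaf}. Combining the two estimates then produces a uniform $C^0$ bound independent of $t$, and the Schauder and indicial-weight arguments of the first paragraph complete the proof of \autoref{Prop_APrioriEstimate}.
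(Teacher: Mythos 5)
Your overall two-step structure --- an a priori $C^0$ bound obtained by a barrier argument combined with the Donaldson functional on slices, followed by regularity and exponential decay --- is the paper's strategy, and the forward estimate $\Abs{s}_{L^\infty}^3 \lesssim \Abs{s}_{L^2}^2 + 1$ you sketch corresponds to Step 1 and \autoref{Step_SlowDecay}--\autoref{Step_IntegralLowerBound} in the paper's proof of \autoref{Prop_APrioriCkEstimate}. But the reverse inequality $\Abs{s}_{L^2(V)} \leq c(\Abs{s}_{L^\infty}+1)$ that you say ``closes the loop'' cannot hold: $V$ has infinite volume, so a slicewise bound $\Abs{s}_{L^2(D_z)} \lesssim \Abs{s}_{L^\infty(D_z)} + 1$ does not integrate to anything finite over the cylinder; using the decay of $s$ to make it converge would be circular, since the decay constant is what we are estimating. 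What the paper actually does is more delicate. The lower bound $M^{3/2} \lesssim \Abs{s}_{L^2(V_{L_2}\setminus V_{L_1})}$ is taken over a finite truncated annulus of width $L_2 - L_1 \asymp M$, and the Donaldson-functional upper bound is genuinely slicewise of the form $\Abs{s}_{L^2(D_z)} - 1/2 \lesssim M\Abs{K_{H_0e^s|_{D_z}}}_{L^2(D_z)}$ --- the right-hand side involves the slicewise mean curvature, not $\Abs{s}_{L^\infty}$. Integrating over the same finite annulus and comparing gives $\sqrt{M} \lesssim \Abs{K_{H_0e^s|_{D_z}}}_{L^2(V\setminus V_{L_0})}$, and then a separate third step --- a Chern--Weil argument via \eqref{Eq_TopologicalEnergyBound} together with the identity $\int_V \abs{K_{H_0e^s}}^2 \leq \int_V \abs{K_{H_0}}^2$, which comes from $\fL(s,t)=0$ and the Laplacian identity --- controls that $L^2$ norm of $K$ independently of $s$ and $t$. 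Your proposal never controls $\Abs{K_{H_0e^s|_{D_z}}}_{L^2}$ at all; that control is precisely what replaces the impossible $\Abs{s}_{L^2(V)} \lesssim \Abs{s}_{L^\infty}$ and closes the argument.

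A second gap: you claim that once $\Abs{s}_{C^0}$ is controlled, ``Schauder estimates on unit balls give interior $C^{k,\alpha}$ bounds uniform in $t$.'' This underestimates the difficulty, because the equation for $s$ contains the quadratic gradient term $B(\nabla_{H_0}s\otimes\nabla_{H_0}s)$; a $C^0$ bound on $s$ alone gives no control on $\nabla s$, and Schauder theory does not apply to the raw quasilinear equation. The paper devotes \autoref{Sec_BSInteriorEstimate} and \autoref{Sec_Hildebrandt} to exactly this point: a Morrey--Campanato hole-filling argument in the style of Hildebrandt extracts a $C^{1,\beta}$ bound from the $C^0$ bound, after which bootstrapping is routine (\autoref{Thm_BSInteriorEstimate}). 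You should either invoke the Bando--Siu interior estimate explicitly or supply an alternative; the bare appeal to Schauder leaves a real hole. The final decay bootstrap you indicate --- using the Fredholm theory of \autoref{Prop_TraceFreeLaplacian} and $K_{H_0}\in C^\infty_\delta$ --- is essentially right, though in the paper it is preceded by establishing an initial $L^2$ decay rate via \autoref{Prop_AbstractDecay} and a simple-on-slices Poincaré inequality before the rate is promoted to $\delta$.
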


The proof of this proposition, to which this section is devoted, has two steps:
First we prove that $\Abs{s}_{C^0}$ is bounded by a constant depending only on $H_0$ using ideas from \cite{SaEarp2011}.
This implies that $\Abs{s}_{C^k}$ is bounded by a constant depending only on $k$ and $H_0$ by an argument of Bando and Siu \cite[Proposition 1]{Bando1994}.
(For the reader's convenience we give a detailed proof of this in \autoref{Sec_BSInteriorEstimate}.)
The second step is a decay estimate which is similar to \cite[Steps 3 and 4 in the proof of Theorem 4.1]{Haskins2012}.

\subsection{A priori \texorpdfstring{$C^k$}{Ck} estimate}

\begin{prop}
  \label{Prop_APrioriCkEstimate}
  If $(s,t) \in C^\infty_\delta(V,i\su(E,H_0)) \times [0,1]$ satisfies $\fL(s,t) = 0$,
  then
  \begin{equation*}
    \Abs{s}_{C^k} \leq c_k.
  \end{equation*}
\end{prop}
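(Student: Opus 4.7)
The strategy, as signaled in the introduction to this section, is to prove a uniform $C^0$ bound on $s$ first, and then upgrade it to uniform $C^k$ bounds by the Bando--Siu interior estimate.

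For the $C^0$ bound, I would start from the Weitzenb\"ock-type inequality that on the open set $\{s \neq 0\}$,
\begin{equation*}
  \Delta \abs{s} + 2t\abs{s} \leq 2\abs{K_{H_0}},
\end{equation*}
which follows from $\fL(s,t) = 0$ together with the positivity property of the Donaldson functional (see e.g.\ \cite[Lemma~3.1.7]{Luebke1995}). On a compact K\"ahler manifold one solves $\Delta f = 2\abs{K_{H_0}}$ and uses $f$ as a barrier; on $V$ any such $f$ decays only linearly along the cylinder (cf.\ the comment before \autoref{Prop_ScalarLaplacian} and \autoref{Prop_HYMLineBundles}), so this argument is not directly available. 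I would instead follow the two-step argument of S\'a~Earp \cite{SaEarp2011}: first use the linearly-growing barrier together with integration by parts against $\abs{s}^2$ to show
\begin{equation*}
  \Abs{s}_{L^\infty}^3 \lesssim \Abs{s}_{L^2}^2 + c,
\end{equation*}
and then exploit that on each transverse slice $D_z$ the bundle $\sE_D$ is $\mu$--stable and $H_0|_{D_z}$ is exponentially close to the $\P$HYM metric $H_D$, so that the standard Uhlenbeck--Yau coercivity estimate for the Donaldson functional on $D_z$ (uniform in $z$, since $D$ is fixed) yields
\begin{equation*}
  \Abs{s}_{L^2(D_z)} \lesssim \Abs{s}_{L^\infty(D_z)} + c\,e^{-\delta\ell(z)}.
\end{equation*}
Integrating in $\ell$ and combining with the cubic inequality above gives $\Abs{s}_{L^\infty} \leq c$.

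Once $\Abs{s}_{C^0}$ is bounded, I would rewrite the equation $\fL(s,t) = 0$ using \autoref{Prop_KH0ExpS} as
\begin{equation*}
  \tfrac12\nabla_{H_0}^*\nabla_{H_0} s + B(\nabla_{H_0} s \otimes \nabla_{H_0} s) + t\,s = C(K_{H_0}),
\end{equation*}
where $B$ and $C$ have coefficients that depend smoothly on $s$ and hence, thanks to the $C^0$ bound, are themselves uniformly bounded. This is a semilinear elliptic system with gradient-quadratic nonlinearity, uniformly bounded right-hand side (recall $K_{H_0} \in C^\infty_\delta$), bounded coefficients, and a bounded solution. Because $V$ has bounded geometry (being ACyl), the Bando--Siu interior estimate \cite[Proposition~1]{Bando1994}, reproduced in \autoref{Sec_BSInteriorEstimate}, applies on unit balls around every point of $V$ with constants independent of the centre; this yields $\Abs{s}_{C^k} \leq c_k$.

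The main obstacle is the $L^2$--to--$L^\infty$ reverse estimate on slices: the constant coming from the compact Uhlenbeck--Yau argument on $(D,\sE_D)$ must be controlled uniformly as one integrates along the cylindrical variable, and the error introduced by the difference between $H_0|_{D_z}$ and $H_D$ has to be absorbed. Everything else in the argument is either a standard computation or a direct application of the results quoted above.
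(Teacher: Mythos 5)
The overall architecture of your proposal matches the paper's proof exactly: a uniform $C^0$ bound via a barrier argument near the compact core combined with slicewise Donaldson functional estimates on the cylinder, followed by an upgrade to $C^k$ via \autoref{Thm_BSInteriorEstimate}. However, the $C^0$ argument has a genuine gap.

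The slicewise inequality you write down,
\begin{equation*}
  \Abs{s}_{L^2(D_z)} \lesssim \Abs{s}_{L^\infty(D_z)} + c\,e^{-\delta\ell(z)},
\end{equation*}
is not what the Uhlenbeck--Yau coercivity of the Donaldson functional gives. What \autoref{Thm_MControlsS}, \autoref{Prop_MAdditivity} and \autoref{Prop_MUpperBound} actually yield is
\begin{equation*}
  \Abs{s}_{L^2(D_z)} - 1 \lesssim \int_{D_z} \abs{s}\,\abs{K_{H_0 e^s|_{D_z}}} + e^{-\delta L_0} \lesssim M \,\Abs{K_{H_0 e^s|_{D_z}}}_{L^2(D_z)} + e^{-\delta L_0},
\end{equation*}
which still has the slicewise mean-curvature term $K_{H_0 e^s|_{D_z}}$ on the right. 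This term depends on $s$ and its derivatives, so it is \emph{not} a priori bounded; and the inequality you wrote is either false as stated (it hides $K$), or — if one interprets it via $\Abs{s}_{L^2(D_z)} \leq \vol(D)^{1/2}\Abs{s}_{L^\infty(D_z)}$ — trivially true and useless. To close the argument one needs a separate estimate showing that $\Abs{K_{H_0 e^s|_{D_z}}}_{L^2(V\setminus V_{L_0})}$ is uniformly bounded. The paper does this by a topological argument: the Chern--Weil identity for $q_4(H) = 2c_2(H) - \frac{r-1}{r}c_1(H)^2$, together with $s \in C^\infty_\delta$, forces $\int_V \abs{F^\circ_{H_0 e^s}}^2 - \abs{F^\circ_{H_0}}^2 = \int_V \abs{K_{H_0e^s}}^2 - \abs{K_{H_0}}^2 \leq 0$ (the last sign coming from $K_{H_0 e^s} = -t\,\mathrm{Ad}(e^{-s/2})s$ and \eqref{Eq_DeltaS2}); and the same identity on each compact slice $D_z$ converts this into the needed $L^2$ bound on the slicewise $K$'s. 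This is the key nontrivial input missing from your sketch, and it is not a ``standard computation.''

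Two smaller issues. First, the cubic inequality in the paper is obtained over a \emph{bounded} region $V_{L_2}\setminus V_{L_1}$ with $L_2-L_1 \asymp M$, using the ``slow decay'' consequence of the linear barrier (\autoref{Step_SlowDecay}) and a mean-value inequality — not by integrating by parts against $\abs{s}^2$ over all of $V$. This matters: one cannot use $\Abs{s}_{L^2(V)}$ quantitatively in an a priori estimate, since no bound on it is available until after the proof is complete. Second, the ``integrating in $\ell$'' step must also be restricted to that same bounded interval of length $\asymp M$, which is precisely what makes the powers of $M$ work out in the comparison between the lower bound from \autoref{Step_IntegralLowerBound} and the upper bound from \autoref{Step_SlicewiseUpperBounds}.
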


\begin{proof}
  By \autoref{Thm_BSInteriorEstimate} it suffices to prove the proposition for $k = 0$.
  Fix $L_0 \gg 1$ and set
  \begin{equation*}
    N := \Abs{s}_{L^\infty(V)} \qandq
    M := \Abs{s}_{L^\infty(V\setminus V_{L_0})}.
  \end{equation*}
  
  \begin{step}
    We have
    \begin{equation*}
      N - M \lesssim L_0+1.
    \end{equation*}
  \end{step}

  We can assume that $\abs{s}$ achieves its maximum at $x_0 \in V_{L_0}$.
  From \autoref{Prop_LaplacianIdentity} and $\fL(s,t) = 0$ it follows that 
  \begin{equation}
    \label{Eq_DeltaS2}
    \Delta \abs{s}^2 + 4t \abs{s}^2 \leq -4\inner{K_{H_0}}{s};
  \end{equation}
  hence,
  \begin{equation*}
    \Delta \abs{s}^2 \leq 4N \abs{K_{H_0}}.
  \end{equation*}
  Denote by $f \in C^{2,\alpha}_\delta(V)$ and $A > 0$ the unique solution to
  \begin{equation*}
    \Delta(f-A\ell) = 4\abs{K_{H_0}}.
  \end{equation*}
  Applying the maximum principle to $\abs{s}^2 - N(f - A\ell)$ on $V_{L_0}$ we have
  \begin{equation*}
    N^2 \leq M^2 + N(AL_0+ 2\Abs{f}_{L^\infty}).
  \end{equation*}
  This implies the assertion since $M \leq N$.

  \begin{step}
    \label{Step_SlicewiseKLowerBound}
    We have
    \begin{equation*}
      \sqrt{M}
      \lesssim
        \Abs{K_{H_0e^s|_{D_z}}}_{L^2(V\setminus V_{L_0})}.
    \end{equation*}
    Here $D_z$ is as in \eqref{Eq_Dz} for $z = (L,\theta) \in (L_0,\infty)\times S^1$.
  \end{step}

  \begin{substep}
    \label{Step_SlowDecay}
    If $x_0 \in \overline{V\setminus V_{L_0}}$ is such that
    \begin{equation*}
      \abs{s}(x_0) = M,
    \end{equation*}
    then for all $L \geq \ell(x_0)$ we have
    \begin{equation*}
      \Abs{s}_{L^\infty(\del V_L)} - \frac12 M
      \gtrsim
        \ell(x_0)-L.
    \end{equation*}
  \end{substep}

  By the maximum principle applied to $\abs{s}^2 - N(f - A\ell)$ on $V_L$ we have
  \begin{equation*}
    M^2 - Nf(x_0) + NA\ell(x_0)
    \leq
    \Abs{s}_{L^\infty(\del V_L)}^2
    + N\Abs{f}_{L^\infty(\del V_L)} + NAL.
  \end{equation*}
  The assertion follows since we can assume that $M \geq 8\Abs{f}_{L^\infty(V\setminus V_{L_0})}$ and $N \leq 2M$.

  \begin{substep}
    \label{Step_IntegralLowerBound}
    There are $L_0 \leq L_1 < L_2$ with $L_2-L_1 \asymp M$ such that
    \begin{equation*}
      M^{3/2} 
      \lesssim \Abs{s}_{L^2(V_{L_2} \setminus V_{L_1})}.
    \end{equation*}
  \end{substep}

  By \autoref{Step_SlowDecay} we have
  \begin{equation*}
    M \lesssim \Abs{s}_{L^\infty(\del V_{L})}
  \end{equation*}
  for $0 \leq L - \ell(x_0) \asymp M$;
  hence, using the mean value inequality \cite[Theorem 9.20]{Gilbarg2001} it follows that
  \begin{equation*}
    M^2 
    \lesssim \int_{V_{L+1}\setminus V_{L-1}} \abs{s}^2 + e^{-\delta L_0} M.
  \end{equation*}
  Since $L_0 \gg 1$, the second term on the right-hand side can be rearranged.
  Summing over $L-\ell(x_0) = 1, \ldots, k$ (with $k \asymp M$) yields the asserted inequality.

  \begin{substep}
    \label{Step_SlicewiseUpperBounds}
    We have
    \begin{equation*}
      \Abs{s}_{L^2(D_z)} - 1/2
      \lesssim
        M \Abs{K_{H_0e^s|_{D_z}}}_{L^2(D_z)}.
    \end{equation*}
  \end{substep}

  At this stage the $\mu$--stability of $\sE_D$ comes into play via the Donaldson functional $\sM$; see \autoref{Sec_DonaldsonFunctional}.
  Since $L_0 \gg 1$ and $\sE_D$ is $\mu$--stable, $\sE|_{D_z}$ is $\mu$--stable as well.
  Denote by $H_{D_z}$ the $\P$HYM metric on $\sE_{D_z}$ inducing the same metric on $\det(\sE|_{D_z})$ as $H_0|_{D_z}$.

  Using \autoref{Thm_MControlsS}, \autoref{Prop_MAdditivity}, $\log (H_{D_z}^{-1}H_0|_{D_z}) \in C^\infty_\delta(V,i\su(E,H_0))$, and \autoref{Prop_MUpperBound} we have
  \begin{align*}
    \Abs{s}_{L^2(D_z)} - 1
    &\lesssim
      \sM(H_{D_z},H_0e^s|_{D_z}) \\
    &=
      \sM(H_0|_{D_z},H_0e^s|_{D_z}) + \sM(H_{D_z},H_0|_{D_z}) \\
    &=
      \sM(H_0|_{D_z},H_0e^s|_{D_z}) + O(e^{-\delta L_0}) \\
    &\lesssim
      \int_{D_z} \abs{s}\abs{K_{H_0e^s|_{D_z}}} + e^{-\delta L_0}.
  \end{align*}
  This implies the asserted inequality.

  Comparing the lower bounds from \autoref{Step_IntegralLowerBound} with the upper bounds obtained by integrating \autoref{Step_SlicewiseUpperBounds} completes the proof of \autoref{Step_SlicewiseKLowerBound}.

  \begin{step}
    We have
    \begin{equation*}
      \Abs{K_{H_0e^s|_{D_z}}}_{L^2(V\setminus V_{L_0})}^2
      \lesssim e^{-\delta L_0} + \Abs{F_{H_0}^{\circ}}_{L^2(V_{L_0})}^2.
    \end{equation*}
    Here $F_{H_0}^{\circ}$ denotes the curvature of the $\PU(r)$--connection induced by $H_0$.
  \end{step}

  Once this is proved, the desired control on $M$ follows and the proof of \autoref{Prop_APrioriCkEstimate} will be complete.

  \begin{substep}
    We have
    \begin{equation*}
      \Abs{K_{H_0e^s|_{D_z}}}_{L^2(V\setminus V_{L_0})}^2
      \lesssim
        \int_V \abs{F_{H_0e^s}^{\circ}}^2 - \abs{F_{H_0}^{\circ}}^2
        + ce^{-\delta L_0}
        + \Abs{F_{H_0}^{\circ}}_{L^2(V_{L_0})}^2.
    \end{equation*}
  \end{substep}

  If $H$ is a Hermitian metric on a holomorphic bundle $\sE$ over an $n$--dimensional Kähler manifold $X$ with Kähler form $\omega$, then
  \begin{equation}
    \label{Eq_TopologicalEnergyBound}
    q_4(H) \wedge \omega^{n-2}
      = c\(\abs{F_{H}^{\circ}}^2 - \abs{K_{H}}^2\)\vol
  \end{equation}
  with
  \begin{equation*}
    q_4(H) :=
    2c_2(H) - \frac{r-1}{r}c_1(H)^2
  \end{equation*}
  and $c_k$ denoting the $k$--th Chern form associated with $H$.

  If $X$ is compact, then the integral of the left-hand side of \eqref{Eq_TopologicalEnergyBound} depends only $\sE$;
  hence,
  \begin{equation*}
    \int_{D_z} \abs{K_{H_0e^s|_{D_z}}}^2
    = \int_{D_z} \abs{K_{H_0|_{D_z}}}^2
    + \int_{D_z} \abs{F_{H_0e^s|_{D_z}}^{\circ}}^2 - \abs{F_{H_0|_{D_z}}^{\circ}}^2.
  \end{equation*}
  Since
  \begin{align*}
    \abs{F_{H_0} - F_{H_0|_{D_z}}}
    \lesssim
      e^{-\delta L} \qandq
    \abs{K_{H_0|_{D_z}}}
    \lesssim
      e^{-\delta L},
  \end{align*}
  it follows that
  \begin{align*}
    \int_{D_z} \abs{K_{H_0e^s|_{D_z}}}^2
    &\lesssim
      \int_{D_z} \abs{F_{H_0e^s|_{D_z}}^{\circ}}^2 - \abs{F_{H_0|_{D_z}}^{\circ}}^2 + e^{-\delta L} \\
    &\lesssim
      \int_{D_z} \abs{F_{H_0e^s}^{\circ}}^2 - \abs{F_{H_0}^{\circ}}^2 + e^{-\delta L}.
  \end{align*}

  \begin{substep}
    We have
    \begin{equation*}
      \int_V \abs{F_{H_0e^s}^{\circ}}^2 - \abs{F_{H_0}^{\circ}}^2
      \leq 0.
    \end{equation*}    
  \end{substep}

  Since $s \in C^\infty_\delta(V,i\su(E,H_0))$, we have
  \begin{equation*}
    \int_V \(q_4(H_0e^s) - q_4(H_0)\)\wedge \omega^{n-2} = 0.
  \end{equation*}
  Using \eqref{Eq_TopologicalEnergyBound}, we obtain
  \begin{equation*}
    \int_V \abs{F_{H_0e^s}^{\circ}}^2 - \abs{F_{H_0}^{\circ}}^2
    =
      \int_V \abs{K_{H_0e^s}}^2 - \abs{K_{H_0}}^2.
  \end{equation*}
  To see that the right-hand side is non-positive, we use \eqref{Eq_DeltaS2} and $\fL(s,t) = 0$ to derive
  \begin{equation*}
    \int_V \abs{K_{H_0e^s}}^2
    = \int_V t^2\abs{s}^2
    \leq \int_V t\abs{K_{H_0}}\abs{s}
    \leq \int_V \frac12\abs{K_{H_0}}^2+  \frac12 \abs{K_{H_0e^s}}^2.
    \qedhere
  \end{equation*}
\end{proof}

\subsection{Decay estimate}
\label{Sec_Decay}

\begin{proof}[Proof of \autoref{Prop_APrioriEstimate}]
  To complete the proof we need to establish quantitative exponential decay bounds for $s$ using the a priori estimate in \autoref{Prop_APrioriCkEstimate} and the qualitative information that $s \in C^\infty_\delta(V,i\su(E,H_0))$.

  Fix $L_0 \gg 1$ as in the proof of \autoref{Prop_APrioriCkEstimate}.

  \setcounter{step}{0}
  \begin{step}
    We have
    \begin{equation*}
      \int_{V\setminus V_{L_0}} \abs{\nabla_{H_0}s}^2 \leq c.
    \end{equation*}
  \end{step}

  From \autoref{Prop_LaplacianIdentity} and $\fL(s,t) = 0$ it follows that
  \begin{equation*}
    \Delta \abs{s}^2 + 2\abs{\upsilon(-s)\nabla_{H_0}s}^2
    \leq
      -4\inner{K_{H_0}}{s}.
  \end{equation*}
  Since
  \begin{equation*}
   \upsilon(-s)
    =
      \sqrt{\frac{1-e^{-\ad_s}}{\ad_s}}
    \qandq
    \sqrt{\frac{1-e^{-x}}{x}}
    \gtrsim
      \frac{1}{\sqrt{1+\abs{x}}},
  \end{equation*}
  it follows that
  \begin{equation}
    \label{Eq_NablaS}
      \abs{\nabla_{H_0} s}^2
      \lesssim
        (1+\Abs{s}_{L^\infty}) \(\abs{K_{H_0}}\abs{s} - \Delta \abs{s}^2\).
  \end{equation}
  Integrating this over $V$ and using \eqref{Eq_KH0Decay} as well as \autoref{Prop_APrioriCkEstimate} yields the asserted estimate.

  \begin{step}
    \label{Step_L2EpsilonDecay}
    For some $\epsilon > 0$ and all $L \geq L_0$, we have
    \begin{equation*}
      \int_{V\setminus V_L} \abs{s}^2
      \lesssim e^{-2\epsilon L}
      \qandq
      \int_{V\setminus V_L} \abs{\nabla_{H_0}s}^2
      \lesssim e^{-2\epsilon L}.
    \end{equation*}
  \end{step}

  Since $\sE_D$ is simple, for all $\tilde s \in \Gamma(D,\sEnd_0(\sE_D))$ we have
  \begin{equation*}
    \int_D \abs{\tilde s}^2
    \lesssim \int_D \abs{\delbar_D \tilde s}^2
    \lesssim \int_D \abs{\nabla_{H_D} \tilde s}^2.
  \end{equation*}
  Because $L_0 \gg 1$, this implies that
  \begin{equation}
    \label{Eq_SimplePoincare}
    \int_{\del V_L} \abs{s}^2
    \lesssim \int_{\del V_L} \abs{\nabla_{H_0} s}^2
  \end{equation}
  for $L \geq L_0$.
  Therefore, it suffices to prove the second inequality.
  
  Integrating \eqref{Eq_NablaS} over $V\setminus V_L$ and using \eqref{Eq_SimplePoincare} yields
  \begin{align*}
    \int_{V\setminus V_L} \abs{\nabla_{H_0}s}^2
    &\lesssim
      e^{-\delta L}
      + \int_{\del V_L} \abs{\nabla_{H_0}s}\abs{s} \\
    &\lesssim 
      e^{-\delta L}
      + \int_{\del V_L} \abs{\nabla_{H_0}s}^2.
  \end{align*}
  The assertion now follows from \autoref{Prop_AbstractDecay},
  which will be proved at the end of this section.

  \begin{step}
    \label{Step_CkEpsilonDecay}
    With $\epsilon > 0$ as above
    \begin{equation*}
      \Abs{s}_{C^{k,\alpha}_\epsilon} \leq c_{k,\alpha}.
    \end{equation*}
  \end{step}

  As in the proof of \autoref{Prop_Regularity}, we can write $\fL(s,t) = 0$ in the form
  \begin{equation}
    \label{Eq_SchematicPDE}
    \(\frac12\nabla_{H_0}^*\nabla_{H_0} + t\) s
    + B(\nabla_{H_0} s\otimes \nabla_{H_0} s)
    = \fe,
  \end{equation}
  where $B$ is linear with coefficients depending on $s$, and by \eqref{Eq_KH0Decay}
  \begin{equation*}
    \Abs{\fe}_{C^{k,\alpha}_\delta} \leq c_{k,\alpha}.
  \end{equation*}
  Using standard interior estimates the assertion follows from \autoref{Prop_APrioriCkEstimate} and \autoref{Step_L2EpsilonDecay}.

  \bigbreak
  \begin{step}
    We prove the proposition.
  \end{step}

  Since
  \begin{equation*}
    \Abs{\nabla_{H_0} s \otimes \nabla_{H_0} s}_{C^{k,\alpha}_{2\epsilon}}
    \lesssim
      \Abs{\nabla_{H_0}s}_{C^{k,\alpha}_\epsilon}^2,
  \end{equation*}
  we note that
  \begin{equation*}
    \Abs*{\frac12\nabla_{H_0}^*\nabla_{H_0}s+ts}_{C^{k,\alpha}_{\epsilon'}}
    \leq
      c_{k,\alpha}.
  \end{equation*}
  with $\epsilon' := \min\set{2\epsilon,\delta}$.
  From \autoref{Prop_TraceFreeLaplacian} it follows that
  \begin{equation*}
    \Abs{s}_{C^{k,\alpha}_{\epsilon'}}
    \leq
      c_{k,\alpha}.
  \end{equation*}
  Repeating this argument a finite number of times we finally arrive at $\epsilon' = \delta$.
\end{proof}

\begin{prop}
  \label{Prop_AbstractDecay}
  If $f \co [0,\infty) \to [0,\infty)$ satisfies
  \begin{equation*}
    f(L) \leq Ae^{-\delta L} - B f'(L)
  \end{equation*}
  with $A,B > 0$, then 
  \begin{equation*}
    f(L) \leq (2A+f(0))e^{-\epsilon L}
  \end{equation*}
  with $\epsilon := \min\set{\delta,1/2B}$.
\end{prop}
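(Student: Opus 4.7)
The plan is to exhibit an explicit exponentially decaying super-solution of the differential inequality and conclude by a Gronwall-type comparison. Rewriting the hypothesis as $Bf'(L) + f(L) \leq A e^{-\delta L}$, the natural candidate is
\[
  \phi(L) := (2A + f(0))\, e^{-\epsilon L},
  \qquad
  \epsilon := \min\{\delta, 1/(2B)\}.
\]

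First I would verify directly that $\phi$ satisfies the reverse inequality $B\phi'(L) + \phi(L) \geq A e^{-\delta L}$: the left-hand side equals $(1 - \epsilon B)(2A + f(0))\, e^{-\epsilon L}$, and the two defining properties $\epsilon \leq 1/(2B)$ and $\epsilon \leq \delta$ give respectively $1 - \epsilon B \geq 1/2$ and $e^{-\epsilon L} \geq e^{-\delta L}$, so
\[
  B\phi'(L) + \phi(L) \geq \tfrac{1}{2}(2A + f(0))\, e^{-\epsilon L} \geq A e^{-\delta L}.
\]
Subtracting this from the hypothesis, the difference $h := f - \phi$ satisfies $B h'(L) + h(L) \leq 0$, equivalently $\bigl(e^{L/B} h(L)\bigr)' \leq 0$. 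Since $h(0) = f(0) - (2A + f(0)) = -2A \leq 0$, the function $e^{L/B} h(L)$ is non-increasing and non-positive on $[0,\infty)$; in particular $h(L) \leq 0$, which is the desired estimate.

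The argument is essentially routine Gronwall. The only mildly delicate point is choosing the coefficient $2A$ (rather than just $A$) in front of $e^{-\epsilon L}$, so that $\phi$ dominates the forcing term $A e^{-\delta L}$ uniformly on $[0,\infty)$ in the borderline regime $\epsilon = 1/(2B) \leq \delta$, where the factor $1 - \epsilon B$ may be as small as $1/2$.
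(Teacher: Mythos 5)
Your proof is correct and takes essentially the same approach as the paper: you define the same comparison function $\phi(L) = (2A+f(0))e^{-\epsilon L}$, derive the same differential inequality $B h' + h \leq 0$ for $h = f - \phi$ (the paper writes this as $g' \leq -g/B$ with $g = h$), and conclude by the same integrating-factor argument. The only difference is cosmetic: you spell out the verification that $\phi$ is a supersolution, whereas the paper asserts the resulting differential inequality without showing the computation.
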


\begin{proof}
  The function $g\co [0,\infty) \to \R$ defined by
  \begin{equation*}
    g(L) := f(L) - (2A+f(0))e^{-\epsilon L}
  \end{equation*}
  satisfies $g(0) = -2A \leq 0$ and $g'(L) \leq -g(L)/B$.
  It follows that $g \leq 0$, which proves the proposition.
\end{proof}


\section{The Bando--Siu continuity method}
\label{Sec_BSContinuityMethod}

To prove \autoref{Thm_ACylDUY} for reflexive sheaves $\sE$ we use a regularization scheme based on ideas of Bando and Siu \cite{Bando1994}.
We construct a one-parameter family of ACyl Kähler manifolds $\set{\tilde V_\epsilon : \epsilon \in (0,1] }$ whose underlying complex manifold $\tilde V$ is obtained by blowing up $S := \sing(\sE)$.
As $\epsilon$ tends to zero, the exceptional divisor shrinks and $\tilde V_\epsilon$ resembles $V$ more and more closely.
$\tilde V$ carries a holomorphic vector bundle $\tilde \sE$, which agrees with $\sE$ outside $S$, and to which \autoref{Thm_ACylDUY} can be applied to construct a $\P$HYM metric $\tilde H_\epsilon$.
The desired $\P$HYM metric on $\sE$ will be constructed by taking the limit as $\epsilon$ tends to zero.

\begin{prop}
  \label{Prop_BlowUpSingularSet}
  There is a complex manifold $\tilde V$, a holomorphic map $\pi\co \tilde V \to V$ which induces a biholomorphic map to $V\setminus S$, and a holomorphic vector bundle $\tilde \sE$ over $\tilde V$ such that
  \begin{equation*}
    \tilde\sE|_{\tilde V \setminus \pi^{-1}(S)} \iso \pi^*(\sE|_{V\setminus S}).
  \end{equation*}
  Moreover, there exists a one-parameter family of Kähler metrics $\set{ g_\epsilon : \epsilon \in (0,1] }$ on $\tilde V$ such that:
  \begin{itemize}
  \item 
    on $\pi^{-1}(V\setminus B_{\sqrt \epsilon}(S))$
    we have $g_\epsilon = \pi^*g$, and
  \item
    for $L \geq L_0$, the Neumann--Poincaré constant of $(\pi^{-1}(V_L),g_\epsilon)$ is bounded above by a constant independent of $\epsilon$.
    Here $L_0$ is as in \autoref{Def_ATIReflexiveSheaf}.
  \end{itemize}
\end{prop}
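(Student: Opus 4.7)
\textbf{Resolution and bundle.} By the standard desingularization of reflexive sheaves due to Rossi, adapted and used by Bando--Siu \cite{Bando1994}, there is a finite composition of blow-ups $\pi\co \tilde V \to V$ with each center contained in the strict transform of the previous singular set, such that the double dual $\tilde \sE := (\pi^*\sE)^{**}$ is locally free on $\tilde V$. Since $S \subset V_{L_0-1}$, all centers lie in $\pi^{-1}(V_{L_0-1})$, so $\pi$ is biholomorphic over $V \setminus V_{L_0-1}$ and $\tilde V$ is ACyl with the same cross-section $D$. The isomorphism $\tilde \sE|_{\tilde V \setminus \pi^{-1}(S)} \iso \pi^*(\sE|_{V\setminus S})$ is automatic since $\sE$ is locally free on $V \setminus S$.

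\textbf{Kähler metrics.} Using the relative ampleness of $\cO_{\tilde V}(-\pi^{-1}(S))$, fix $\tilde\varphi \in C^\infty(\tilde V)$ with support in a small neighborhood of $\pi^{-1}(S)$ such that $\eta := i\del\delbar \tilde\varphi$ is strictly positive on the fibres of $\pi$. Choose a smooth cutoff $\chi\co[0,\infty)\to[0,1]$ with $\chi\equiv 1$ on $[0,1/4]$ and $\chi\equiv 0$ on $[1,\infty)$, and a function $\rho \in C^\infty(V,[0,\infty))$ with $\rho(x) \asymp \mathrm{dist}_g(x,S)^2$ near $S$. Set
\begin{equation*}
  \omega_\epsilon := \pi^*\omega + \epsilon \cdot i\del\delbar\bigl(\chi(\pi^*\rho/\epsilon)\,\tilde\varphi\bigr).
\end{equation*}
For $\epsilon$ sufficiently small this is Kähler: in the transition annulus $\pi^*\rho \in [\epsilon/4,\epsilon]$ the perturbation has uniform $C^2$-size $O(\epsilon)$ and is absorbed by the fixed non-degenerate $\pi^*\omega$, while on $\pi^*\rho \leq \epsilon/4$ the term $\epsilon\eta$ supplies the positivity along the fibres of $\pi$ needed to cure the degeneracy of $\pi^*\omega$ along $\pi^{-1}(S)$. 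By construction $\omega_\epsilon = \pi^*\omega$ on $\pi^{-1}(V \setminus B_{\sqrt\epsilon}(S))$.

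\textbf{Uniform Neumann--Poincar\'e constant.} This is the main obstacle. Split $\pi^{-1}(V_L) = U_\epsilon \cup \Omega_\epsilon$ with $U_\epsilon := \pi^{-1}(B_{\sqrt\epsilon}(S))$ and $\Omega_\epsilon := \pi^{-1}(V_L) \setminus U_\epsilon$. On $\Omega_\epsilon$ the metric coincides with $\pi^*g$, isometric to $(V_L \setminus B_{\sqrt\epsilon}(S), g)$; an extension argument using that $S$ has real codimension $\geq 4$ gives an $\epsilon$-independent Poincar\'e constant. On $U_\epsilon$, the metric $\epsilon^{-1} g_\epsilon$ is uniformly bi-Lipschitz to a fixed Kähler metric on a relatively compact domain of $\tilde V$, so the intrinsic Poincar\'e constant of $(U_\epsilon, g_\epsilon)$ is of order $\epsilon$. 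For a $g_\epsilon$-mean-zero test function $f$, the local Poincar\'e inequalities control $\|f - \bar f_{\Omega_\epsilon}\|_{L^2(\Omega_\epsilon)}$ and $\|f - \bar f_{U_\epsilon}\|_{L^2(U_\epsilon)}$; the difference of local means $\bar f_{\Omega_\epsilon} - \bar f_{U_\epsilon}$ is controlled using the vanishing-mean condition together with $\vol_{g_\epsilon}(U_\epsilon) \to 0$. Combining these estimates by a Whitney-type partition argument subordinate to $\{\Omega_\epsilon, U_\epsilon\}$ yields a Neumann--Poincar\'e constant for $(\pi^{-1}(V_L), g_\epsilon)$ independent of $\epsilon$.
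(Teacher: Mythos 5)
Your plan follows the same broad architecture as the paper (resolve, build a family of ACyl metrics, prove a uniform Neumann--Poincar\'e bound), but the crucial quantitative step in your construction of $g_\epsilon$ contains a scaling error that breaks the positivity claim.

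\textbf{The Kähler form ansatz.} You set $\omega_\epsilon = \pi^*\omega + \epsilon\cdot i\del\delbar\bigl(\chi(\pi^*\rho/\epsilon)\tilde\varphi\bigr)$ with $\rho \asymp \mathrm{dist}^2$, so the cutoff transitions over the annulus $\mathrm{dist} \asymp \sqrt\epsilon$, and you assert that the perturbation has $C^2$-size $O(\epsilon)$ there. Check this: differentiating $\chi(\pi^*\rho/\epsilon)$ twice produces terms $\chi''\cdot(\nabla\pi^*\rho)^2/\epsilon^2$ and $\chi'\cdot\nabla^2\pi^*\rho/\epsilon$; in the annulus $|\nabla\pi^*\rho|\asymp\mathrm{dist}\asymp\sqrt\epsilon$ and $\nabla^2\pi^*\rho\asymp 1$, so both contributions are $\asymp 1/\epsilon$. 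Since $\tilde\varphi$ is a fixed function with fixed support (independent of $\epsilon$), $\tilde\varphi$ and $\nabla\tilde\varphi$ are $O(1)$ on the annulus, and Leibniz gives $i\del\delbar(\chi\tilde\varphi) = O(1/\epsilon)$. Multiplying by $\epsilon$ yields a perturbation of size $O(1)$, not $O(\epsilon)$. An $O(1)$ exact $(1,1)$-form has no sign and cannot be absorbed by $\pi^*\omega$ in general, so the claim that $\omega_\epsilon$ is Kähler is not justified. The paper avoids this by taking the potential itself to decay in the annulus: it uses $\frac{\epsilon^2}{2\pi}\log\rho^2$, which is $\asymp\epsilon^2|\log\epsilon|$ when $\rho\asymp\sqrt\epsilon$, so even after losing a factor $1/\epsilon$ from the cutoff one is left with $O(\epsilon|\log\epsilon|)$. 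Replacing your $\epsilon\tilde\varphi$ by $\epsilon^2\tilde\varphi$ would fix the annulus estimate, but then your Neumann--Poincar\'e argument would need revision, because with that scaling $\epsilon^{-1}g_\epsilon$ is no longer uniformly bi-Lipschitz to a fixed metric on the tubular neighbourhood (the exceptional fibres shrink at rate $\epsilon^2$, hence at rate $\epsilon$ after rescaling by $\epsilon^{-1}$). Your two steps are thus coupled and currently inconsistent once the scaling is corrected.

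\textbf{Neumann--Poincar\'e bound.} Your decomposition of $\pi^{-1}(V_L)$ into $U_\epsilon$ and $\Omega_\epsilon$ and gluing of local Poincar\'e constants is a genuinely different route from the paper, which instead invokes the Grigor'yan--Saloff-Coste discretization scheme with a good covering and local weak Poincar\'e inequalities compared against model spaces $\Bl_0\C^k\times\C^{n-k}$. Your sketch leaves the gluing step unaddressed: $U_\epsilon$ and $\Omega_\epsilon$ are (essentially) disjoint, and the difference of their local means cannot be estimated from the vanishing-mean condition and $\vol(U_\epsilon)\to 0$ alone; one needs an overlapping pair of regions (or a boundary transfer estimate). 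This is repairable, but the discretization method sidesteps the problem entirely, and---more importantly---it is robust against the corrected metric scaling, whereas your rescaling-to-a-fixed-metric argument is not. Finally, for the resolution step, the route via Rossi/Bando--Siu and the double dual $(\pi^*\sE)^{**}$ is a valid alternative to the paper's use of the rational map to $\Gr_r(\sF)$ combined with Hironaka; that part of your plan is fine.
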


\bigbreak
\begin{proof}
  The proof has three steps.

  \setcounter{step}{0}
  \begin{step}
    Construction of $\tilde V$ and $\tilde \sE$.
  \end{step}

  We follow the method of \citet[p.~46]{Bando1994}, see also \cite[Section 4.1]{Sibley2015}.

  Since $\sE^*$ is coherent,
  there exists a locally free sheaf $\sF$ and a surjective morphism $\sF^* \to \sE^* \to 0$.
  Since $\sE$ is reflexive, by dualising, we get $0 \to \sE \to \sF$.
  This defines a rational section $\phi_\sE \co V \dashrightarrow \Gr_r(\sF)$, with locus of indeterminacy $S$.
  By a result of \citet[Part I, Chapter 0, Section 5]{Hironaka1964}, there exists a holomorphic map $\pi \co \tilde V \to V$, which is biholomorphic outside $S$ and equivalent to a sequence of blow-ups along smooth submanifolds (of codimension at least three), such that  $\phi_\sE\circ \pi$ extends to a section $\tilde V \to \Gr_r(\pi^*\sF)$.
  This section defines the desired holomorphic vector bundle $\tilde \sE$ over $\tilde V$.

  \begin{step}
    \label{Step_ModelMetric}
    The model metric.
  \end{step}

  The Kähler form
  \begin{equation*}
    \tilde \omega_\epsilon
    = i\del\delbar\(\frac12\abs{z}^2 + \frac{\epsilon^2}{2\pi}\log\abs{z}^2\)
  \end{equation*}
  on $\C^n \setminus \set{0}$ uniquely extends to a Kähler form on $\Bl_0 \C^n$ which induces the $\epsilon^2$--times the Fubini--Study form $\omega_{FS}$ on the exceptional divisor $\P^{n-1}$.
  More precisely, if we denote by $r$ the radial coordinate, by $\theta$ the $1$--form arising from the $S^1$--action and by $\varpi\co \C^n \setminus \set{0} \to \P^{n-1}$ the projection, then
  \begin{equation*}
    \tilde\omega_\epsilon = (\epsilon^2+r^2)\varpi^*\omega_{FS} + r\rd r\wedge \theta.
  \end{equation*}

  Fix a smooth function $\chi \co [0,\infty) \to [0,1]$ which is equal to one on $[0,1]$ and vanishes outside $[0,2]$.
  For $0 < \epsilon \ll 1$, set $\chi_\epsilon := \chi(\cdot/2\sqrt\epsilon)$ and define a Kähler form on $\Bl_0 \C^n$ by
  \begin{equation*}
     \omega_\epsilon := i\del\delbar\(\frac12\abs{z}^2 + \chi_\epsilon(\abs{z})\cdot \frac{\epsilon^2}{2\pi} \log \abs{z}^2\).
  \end{equation*}
  This agrees with $\tilde\omega_\epsilon$ on $B_{\sqrt \epsilon/2}$, with $\omega_0$ on $\C^n{\setminus} B_{\sqrt\epsilon}(0)$ and satisfies 
  \begin{equation*}
    \abs{\omega_\epsilon-\omega_0} \lesssim \epsilon \abs{\log \epsilon}
  \end{equation*}
  on $B_{\sqrt\epsilon}(0) {\setminus} B_{\sqrt\epsilon/2}(0)$.
  Moreover, we have
  \begin{equation*}
    \frac{\omega_\epsilon^n}{\omega^n} \asymp 1+(\epsilon/r)^{2n-2}.
  \end{equation*}

  \bigbreak
  \begin{step}
    Construction of $g_\epsilon$.
  \end{step}

  $\tilde V$ is constructed by a sequence of blow-ups along smooth submanifolds.
  In fact, by induction we can assume that there is just one blow-up, say, along $C \subset V$.
  Denote by $\rho \co V \to [0,\infty)$ the distance to $C$.
  For $0 < \epsilon \leq \epsilon_0$,
  \begin{equation*}
    \omega_\epsilon := \pi^*\omega + i\del\delbar \(\chi_\epsilon \circ \rho \cdot \frac{\epsilon^2}{2\pi}  \log \rho^2\)
  \end{equation*}
  defines a Kähler form on $\tilde V$ whose restriction to $\pi^{-1}(V\setminus B_\epsilon(S))$ agrees with $\pi^*\omega$.
  We extend the resulting family of Kähler metrics to be constant for $\epsilon \in [\epsilon_0,1]$.

  \begin{step}
    Estimate of the Neumann--Poincaré constant.
  \end{step}

  Fix $L \geq L_0$.
  We use the discretization method of \citet[Section 3.1]{Grigoryan2004} to estimate the Neumann--Poincaré constant of $(\pi^{-1}(V_L),g_\epsilon)$.
  Fix $0 < \sigma \ll 1$.
  Pick a maximal set of points $\set{ x_j : j \in J } \subset V_{L-1/2}$ of distance at least $\sigma$ from each other.
  Set
  \begin{gather*}
    A_0 := V_{L}\setminus V_{L-1/2}, \quad
    A_0^* = A_0^{\#} := V_{L}\setminus V_{L-1},\\
    A_j := \pi^{-1}(B_\sigma(x_j)), \quad
    A_j^* = \pi^{-1}(B_{4\sigma}(x_j)) \qandq
    A_j^{\#} := \pi^{-1}(B_{8\sigma}(x_j)).
  \end{gather*}
  Set $I := J \sqcup \set{0}$.
  $\sA := \set*{ (A_i,A^*_i,A^{\#}_i) : i \in I }$ is a \defined{good covering} of $V_L$ in $V_L$ in the sense of \citet[Definition 3.1]{Grigoryan2004}.
  This means that, for all $i \in I$, $A_i \subset A_i^* \subset A_i^{\#}$ and for some constants $Q_1, Q_2$ the following hold: 
  \begin{itemize}
  \item
    We have $V_L \subset \bigcup_{i\in I} A_i$ and $\bigcup_{i\in I} A_i^{\#} \subset V_L$.
  \item
    For each $i \in I$, $\abs{\set{ j \in I : A_i^{\#} \cap A_j^{\#} \neq \emptyset }} \leq Q_1$.
  \item
    If $d(A_i,A_j) = 0$, then there is a $k = k(i,j) \in I$ such that $A_i \cup A_j \subset A_k^*$.
    Moreover, $\vol(A_k^*) \leq Q_2 \min\set{\vol(A_i),\vol(A_j)}$.
  \end{itemize}

  According to \cite[Theorem 3.7]{Grigoryan2004} the Neumann--Poincaré constant of $V_L$ can be estimated above by $Q_1 \Lambda_c(2 + Q_1^2Q_2\Lambda_d)$.
  Here the \defined{continuous Poincaré constant} $\Lambda_c$ and the \defined{discrete Poincaré constant} $\Lambda_d$ \cite[Definition 3.4 and Definition 3.6]{Grigoryan2004} are the smallest constants such that,
  \begin{equation}
    \label{Eq_LocalWeakPoincare}
    \int_{A_i} \abs{f-\bar f_{A_i}}^2 
      \leq 
      \Lambda_c \int_{A_i^*} \abs{\nabla f}^2
    \qandq
    \int_{A_i^*} \abs{f-\bar f_{A_i^*}}^2
      \leq
      \Lambda_c \int_{A_i^{\#}} \abs{\nabla f}^2
  \end{equation}
  and
  \begin{equation*}
    \sum_{i\in I} \abs{f(i) - \bar f}^2 m(i)
      \leq \Lambda_d \sE(f,f).
  \end{equation*}
  Here
  \begin{gather*}
    m(i) = \vol(A_i), \quad
    \bar f := \frac{\sum_{i \in I} f(i)m(i)}{\sum_{i\in I} m(i)} \qand \\
    \sE(f,f) := \frac12\sum_{(i,j) \in I\times I} \abs{f(i)-f(j)}^2 m(i,j).
  \end{gather*}
  with
  \begin{equation*}
    m(i,j) :=
    \begin{cases}
      \max\set{m(i),m(j)} & \text{if } d(A_i,A_j) = 0 \\
      0 & \text{otherwise}.
    \end{cases}
  \end{equation*}
  
  While the measures of $A_i$, $A_i^*$, and $A_i^{\#}$ are dependent of $\epsilon$, they are uniformly comparable.
  Consequently, the constants $Q_1$ and $Q_2$ and discrete Poincaré constant $\Lambda_d$ can be bounded independent of $\epsilon$.
  Thus it remains to show that $\Lambda_c$ can be bounded independent of $\epsilon$;
  that is, we can find a constant such that \eqref{Eq_LocalWeakPoincare} holds for all $i \in I$ and $\epsilon \in (0,1]$.
  For $i = 0$, \eqref{Eq_LocalWeakPoincare} is obvious.
  For $i \in J$, such estimates follow from scaling considerations and uniform weak Poincaré inequalities
  \begin{equation*}
    \int_{B_r(x)} \abs{f-\bar f_{B_r(x)}}^2
    \leq
      cr^2 \int_{B_{2r}(x)} \abs{\nabla f}^2
  \end{equation*}
  (with $c > 0$ independent of $x$ and $r$)  for certain model spaces, for example, $\Bl_0\C^k \times \C^{n-k}$ equipped with the Kähler metric induced by $i\del\delbar\(\frac12\abs{z}^2 + \frac{1}{2\pi}\log\abs{z}^2 + \frac12\abs{w}^2\)$.
  The existence of these uniform Poincaré constants in turn can also be  established using the discretization method as follows.
  We can assume that $r \gg 1$.
  Denote by $\pi \co \Bl_0\C^k \times \C^{n-k} \to \C^n$ the projection.
  For $i \in \Z^{2n} \subset \C^n$, set 
  \begin{equation*}
    A_i := \pi^{-1}(B_1(i)), \quad
    A_i^* := \pi^{-1}(B_4(i)) \qandq
    A_i^{\#} := \pi^{-1}(B_8(i)).
  \end{equation*}
  If we set $I_{x,r} := \set{ i \in \Z^{2n} \cap \pi(B_r(x)) }$, then $\sA_{x,r} := \set{ (A_i,A^*_i,A^{\#}_i) : i \in I_{x,r} }$ is a good covering of $B_r(x)$ in $B_{2r}(x)$; moreover, the constants $Q_1$ and $Q_2$ as well as the continuous Poincaré constant $\Lambda_c$ of $\sA_{x,r}$ can be bounded independent of $x$ and $r$.
  The discrete Poincaré constant of $\sA_{x,r}$ can be bounded by a constant times $r^2$;
  see, e.g., \cite[Section 3.4]{Berestycki2014}.
  \cite[Theorem 3.7]{Grigoryan2004} thus establishes the desired uniform weak Poincaré inequalities.
\end{proof}

We denote $\tilde V$ equipped with the metric $g_\epsilon$ by $\tilde V_\epsilon$.
Given a subset $U \subset V$, we set $\tilde U := \pi^{-1}(U)$.

Using \autoref{Thm_ACylDUY} for holomorphic vector bundles, for each $\epsilon \in (0,1]$, we construct a $\P$HYM metric $\tilde H_\epsilon$ on $\tilde\sE$ over $\tilde V_\epsilon$.
We can assume that the metric on $\det\tilde\sE$ induced by $\tilde H_\epsilon$ agrees with a fixed asymptotically translation-invariant metric $\tilde h$ which does not depend on $\epsilon$.
Define $\tilde {s}_\epsilon \in C^\infty_\delta(\tilde V_\epsilon,i\su(E,\tilde H_1))$ by
\begin{equation*}
  \tilde {s}_\epsilon := \log \tilde H_1^{-1}\tilde H_\epsilon.
\end{equation*}
The $\P$HYM metric $H$ on $\sE$, whose existence was asserted in \autoref{Thm_ACylDUY}, 
can be constructed using the following proposition and Arzelà--Ascoli
by taking the limit of the metrics $\tilde H_\epsilon$ over $V \setminus U = \tilde V_\epsilon \setminus \tilde U$ as $\epsilon$ tends to zero.
Here $U$ is an arbitrary neighbourhood of $S \subset V$.

\begin{prop}
  \label{Prop_BSAPrioriEstimate}
  For all $\epsilon \in (0,1]$, we have
  \begin{equation*}
    \Abs{\tilde {s}_\epsilon}_{C^k_\delta(\tilde V_\epsilon \setminus \tilde U)}
    \leq c_{k,U}.
  \end{equation*}
\end{prop}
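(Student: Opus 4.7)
The plan is to transcribe the arguments of \autoref{Sec_APrioriEstimate} to the regularized setting $\tilde V_\epsilon$, using $\tilde H_1$ as the reference metric in place of $H_0$, and to verify that every constant can be chosen independently of $\epsilon$ on the region $\tilde V_\epsilon \setminus \tilde U$. For $\epsilon$ small enough that $B_{\sqrt\epsilon}(S) \subset V_{L_0-1}$, the map $\pi$ is a biholomorphism from $\tilde V_\epsilon \setminus \tilde V_{\epsilon,L_0}$ onto $V \setminus V_{L_0}$ intertwining $(g_\epsilon,\tilde H_1)$ with $(g,H_1)$. In particular $K_{\tilde H_1}$ (with respect to $g_\epsilon$) lies in $C^\infty_\delta(\tilde V_\epsilon,i\su(\tilde E,\tilde H_1))$ with norm uniformly bounded in $\epsilon$ away from $\tilde U$ and is, up to $O(e^{-\delta L_0})$, compactly supported in the core $\tilde V_{\epsilon,L_0}$. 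Since $\tilde H_\epsilon$ is $\P$HYM, $\tilde s_\epsilon$ satisfies $K_{\tilde H_1 e^{\tilde s_\epsilon}} = 0$, so \eqref{Eq_DeltaS2} holds with $t = 0$.

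The decisive step is a uniform $C^0$ bound $\Abs{\tilde s_\epsilon}_{L^\infty(\tilde V_\epsilon \setminus \tilde U)} \leq c_U$. We follow the three steps of the proof of \autoref{Prop_APrioriCkEstimate}. Steps 2 and 3 --- the slicewise Donaldson-functional bound from $\mu$-stability of $\sE_D$ and the topological energy identity \eqref{Eq_TopologicalEnergyBound} --- live on the $\epsilon$-independent cylindrical end and transfer verbatim with $\epsilon$-independent constants. Step 1 requires a barrier pair $(f_\epsilon, A_\epsilon) \in C^{2,\alpha}_\delta(\tilde V_\epsilon) \times \R$ with $\Delta_\epsilon (f_\epsilon - A_\epsilon\ell) = 4\abs{K_{\tilde H_1}}$ and uniformly bounded $\Abs{f_\epsilon}_{L^\infty(\tilde V_\epsilon \setminus \tilde U)}$ and $\abs{A_\epsilon}$. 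Existence comes from \autoref{Prop_ScalarLaplacian} on $\tilde V_\epsilon$; $\abs{A_\epsilon}$ is controlled by $\int_{\tilde V_\epsilon}\abs{K_{\tilde H_1}}$, which is uniformly bounded since $K_{\tilde H_1}$ is essentially supported in the core with uniform $L^2$ norm. The uniform $L^\infty$ bound on $f_\epsilon$ away from $\tilde U$ is the heart of the matter: the global $L^2$ estimate obtained by multiplying the equation by $f_\epsilon - A_\epsilon \ell$ and invoking the uniform Neumann--Poincaré inequality of \autoref{Prop_BlowUpSingularSet} on $\tilde V_{\epsilon,L}$, combined with local Moser iteration on balls in $\tilde V_\epsilon \setminus \tilde U$, where $g_\epsilon$ has uniformly bounded geometry, yields the required bound.

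With the uniform $C^0$ estimate in hand, the $C^k$ bound on $\tilde V_\epsilon \setminus \tilde U$ follows from the Bando--Siu interior estimate (\autoref{Thm_BSInteriorEstimate}) applied on a slightly larger neighbourhood $U' \subsetneq U$ of $S$, since $g_\epsilon$ has uniformly bounded geometry on $\tilde V_\epsilon \setminus \tilde U'$. The exponential decay, upgrading $C^k$ to $C^k_\delta$, is obtained by running the argument of \autoref{Sec_Decay} on the cylindrical end: the Kato-type inequality \eqref{Eq_NablaS}, the slicewise Poincaré inequality \eqref{Eq_SimplePoincare} from simplicity of $\sE_D$, and the bootstrap through \autoref{Prop_TraceFreeLaplacian} go through verbatim with $\epsilon$-independent constants, as every ingredient lives on the $\epsilon$-independent cylindrical end. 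The principal obstacle in this programme is the uniform control of the barrier $f_\epsilon$; without the uniform Poincaré constants of \autoref{Prop_BlowUpSingularSet} the Moser iteration would pick up an $\epsilon$-dependent constant and the whole scheme would collapse as the exceptional divisor shrinks.
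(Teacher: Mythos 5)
Your overall architecture matches the paper's: construct the barrier pair $(f_\epsilon, A_\epsilon)$, show that $A_\epsilon$, $\Abs{F_{\tilde H_1}}_{L^2(\tilde V_{\epsilon,L_0})}$ and $\Abs{f_\epsilon}_{L^\infty(\tilde V_\epsilon\setminus\tilde U)}$ are bounded uniformly in $\epsilon$, and then feed these bounds into the argument of Section~\ref{Sec_APrioriEstimate}. However, there are two genuine gaps at the points you identify as ``the heart of the matter''.

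First, the route you propose for $\Abs{f_\epsilon}_{L^\infty(\tilde V_\epsilon\setminus\tilde U)}\leq c_U$ --- global weighted $L^2$ estimate plus local Moser iteration --- does not close. Testing the equation and invoking the uniform weighted Neumann--Poincaré inequality produces an estimate of the form $\Abs{e^{-\delta\ell/2}(f_\epsilon - \bar f_\epsilon)}_{L^2(\tilde V_\epsilon)}\leq c$ for some unknown constant $\bar f_\epsilon$ (the Poincaré ``mean''), together with $\Abs{\nabla_\epsilon f_\epsilon}_{L^2}\leq c$. Moser iteration then yields an $L^\infty$ bound on $f_\epsilon - \bar f_\epsilon$ in compact regions, not on $f_\epsilon$ itself; nothing in your argument controls $\bar f_\epsilon$. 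Moreover on the cylindrical end the weight $e^{-\delta\ell/2}$ works against you: the $L^2$ bound on a ball at height $L$ degrades like $e^{\delta L/2}$, so Moser iteration does not even give a uniform bound on $f_\epsilon - \bar f_\epsilon$ out to infinity. The paper circumvents both problems by working with the gradient: it derives a differential inequality for $F(L):=\int_{V\setminus V_L}\abs{\nabla_\epsilon f_\epsilon}^2$, applies \autoref{Prop_AbstractDecay} to conclude exponential decay $F(L)\lesssim e^{-2\gamma L}$, upgrades to the pointwise bound $\abs{\nabla_\epsilon f_\epsilon}\lesssim e^{-\gamma\ell}$ via interior estimates, and then integrates $\nabla_\epsilon f_\epsilon$ back from the end, using the qualitative fact that $f_\epsilon\in C^0_\delta(\tilde V_\epsilon)$ forces $f_\epsilon\to 0$ at infinity. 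This last integration step is exactly what replaces your missing control of $\bar f_\epsilon$, and it must appear in any correct proof.

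Second, you assert without justification that $K_{\tilde H_1}$ has uniform $L^2$ norm on the core. This is non-trivial: $K_\epsilon=i\Lambda_\epsilon F_{\tilde H_1}-\frac{\tr(i\Lambda_\epsilon F_{\tilde H_1})}{\rk\tilde\sE}\id$ is computed with the degenerating metric $g_\epsilon$, and one must check that $\Abs{F_{\tilde H_1}}_{L^2(\tilde V_{\epsilon,L_0},g_\epsilon)}$ remains bounded as the exceptional divisor collapses. The paper proves this via the scaling comparison $\abs{F_{\tilde H_1}}_{g_\epsilon}^2\vol_{g_\epsilon}\lesssim\bigl(\tfrac{\rho^2+\epsilon^2}{\rho^2+1}\bigr)^{\codim S-3}\abs{F_{\tilde H_1}}^2\vol_{g_1}$, and it is precisely the hypothesis $\codim S\geq 3$ (valid because $\sE$ is reflexive) that makes the right-hand side integrable uniformly in $\epsilon$. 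Without this computation your proof has a circularity, since the uniform $L^2$ bound is an input to, not a consequence of, the barrier construction.
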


\begin{proof}
  Set
  \begin{equation*}
    K_\epsilon
    := 
    i\Lambda_\epsilon F_{{\tilde H_1}} - \frac{\tr(i\Lambda_\epsilon F_{{\tilde H_1}})}{\rk \tilde\sE} \cdot \id_{\tilde\sE},
  \end{equation*}
  and let $f_\epsilon \in C^0_\delta(\tilde V_\epsilon)$ and $A_\epsilon > 0$ be the unique solution to
  \begin{equation*}
    \Delta_{\epsilon} (f_\epsilon-A_\epsilon\ell) = 4\abs{K_\epsilon}.
  \end{equation*}
  Here $\Lambda_\epsilon$ and $\Delta_\epsilon$ denote the dual Lefschetz operator and the Laplace operator on $\tilde V_\epsilon$ respectively.

  If we can prove that
  \begin{equation*}
    \Abs{f_\epsilon}_{L^\infty(\tilde V_\epsilon \setminus \tilde U)} \leq c_U, \quad
    A_\epsilon \leq c \qandq
    \Abs{F_{\tilde H_1}}_{L^2(\tilde V_{\epsilon,L_0})} \leq c,
  \end{equation*}
  then the argument in \autoref{Sec_APrioriEstimate} will yield the asserted bounds on $\tilde {s}_\epsilon$.

  The proof of the above bounds on $f_\epsilon$, $A_\epsilon$ and $F_{\tilde H_1}$ proceeds in four steps.

  \setcounter{step}{0}
  \begin{step}
    \label{Step_FL2KCKBound}
    We have
    \begin{equation*}
      \Abs{F_{\tilde H_1}}_{L^2(\tilde V_{\epsilon,L_0})} \leq c
      \qandq
      \Abs{K_\epsilon}_{C^k_\delta(V\setminus V_{L_0})} \leq c_k;
    \end{equation*}
    in particular, $A_\epsilon \leq c$.
  \end{step}

  By scaling considerations, we have
  \begin{equation*}
    \abs{F_{\tilde H_1}}_{g_\epsilon}^2 \vol_{g_\epsilon}
    \lesssim \(\frac{\rho^2+\epsilon^2}{\rho^2+1}\)^{\codim(S)-3} \abs{F_{\tilde H_1}}^2 \vol_{g_1}.
  \end{equation*}
  Since $\codim S \geq 3$, this implies the asserted $L^2$--bound.
  The second inequality is a consequence of the fact that $g_\epsilon$ and, thus, $K_\epsilon$ does not depend on $\epsilon$ on $V\setminus V_{L_0}$.
  Both estimates together yield $A_\epsilon \lesssim \Abs{K_\epsilon}_{L^1(\tilde V_\epsilon)} \leq c$.

  \begin{step}
    There is a constant $\bar f_\epsilon$, such that on $V \setminus V_{L_0}$ we have
    \begin{equation*}
      \Abs{e^{-\frac{\delta\ell}{2}}(f_\epsilon-\bar f_\epsilon)}_{L^2(\tilde V_\epsilon)}
      \leq
        c
      \qandq
      \Abs{\nabla_\epsilon f_\epsilon}_{L^2(\tilde V_\epsilon)}^2
      \leq
        c.
    \end{equation*}
  \end{step}

  From \autoref{Prop_BlowUpSingularSet} it follows that the weighted Neumann--Poincaré inequality \cite[Theorem 4.18]{Haskins2012} holds for $\sigma = 1$ and $\mu = \frac{\delta}{2}$ with a constant $c > 0$ independent of $\epsilon$;
  hence, for some constant $\bar f_\epsilon$
  \begin{equation*}
    \Abs{e^{-\frac{\delta\ell}{2}}(f_\epsilon-\bar f_\epsilon)}_{L^2(\tilde V_\epsilon)}^2
    \lesssim
    \Abs{\nabla_\epsilon f_\epsilon}_{L^2(\tilde V_\epsilon)}^2.
  \end{equation*}

  Using the previous step, we have
  \begin{align*}
    \Abs{\nabla_\epsilon f_\epsilon}_{L^2(\tilde V_\epsilon)}^2
    &=
      \int_{\tilde V_\epsilon} \inner{\Delta_\epsilon (f_\epsilon-\bar f_\epsilon)}{f_\epsilon-\bar f_\epsilon} \\
    &\leq
      \Abs{e^{\frac{\delta\ell}{2}}(K_\epsilon+A_\epsilon\Delta_\epsilon\ell)}_{L^2(\tilde V_\epsilon)}
      \cdot
      \Abs{e^{-\frac{\delta\ell}{2}}(f_\epsilon-\bar f_\epsilon)}_{L^2(\tilde V_\epsilon)} \\
    &\lesssim
      \Abs{e^{-\frac{\delta\ell}{2}}(f_\epsilon-\bar f_\epsilon)}_{L^2(\tilde V_\epsilon)}.
  \end{align*}
  Combined with the above this yields
  \begin{equation*}
    \Abs{e^{-\frac{\delta\ell}{2}}(f_\epsilon-\bar f_\epsilon)}_{L^2(\tilde V_\epsilon)}
      \leq c.
  \end{equation*}
  This in turn implies the second of the asserted inequalities.

  \begin{step}
    We have
    \begin{equation*}
      \Abs{f_\epsilon}_{L^\infty(\tilde V_\epsilon \setminus U)} \leq c_U.
    \end{equation*}
  \end{step}

  Define $F \co [L_0,\infty) \to [0,\infty)$ by
  \begin{equation*}
    F(L) := \int_{V\setminus V_{L_0}} \abs{\nabla_\epsilon f_\epsilon}^2.
  \end{equation*}
  By the previous step, we have
  \begin{equation*}
    F(L) \leq c.
  \end{equation*}

  Setting $\bar f_{\epsilon,L} := \fint_{\del V_L} f_\epsilon$,
  we have
  \begin{equation*}
    \int_{\del V_L} \abs{f_\epsilon-\bar f_{\epsilon,L}}
    \leq \int_{\del V_L}\abs{f_\epsilon-\bar f_\epsilon}.
  \end{equation*}
  By integration by parts, the Neumann--Poincaré inequality on $\del V_L$ and the previous step, we have
  \begin{align*}
    F(L)
    &\leq
      \int_{V\setminus V_L} \abs{K_\epsilon+A_\epsilon\Delta\ell}\abs{f_\epsilon-\bar f_{\epsilon,L}}
      + \int_{\del V_L} \abs{\nabla_\epsilon f}\abs{f_\epsilon-\bar f_{\epsilon,L}} \\
    &\lesssim
      \int_{V\setminus V_L} e^{-\delta\ell}\abs{f_\epsilon-\bar f_\epsilon}
      + \int_{\del V_L} \abs{\nabla_\epsilon f}\abs{f_\epsilon-\bar f_{\epsilon,L}} \\
    &\lesssim
      e^{-\frac{\delta L}{2}} - F'(L).
  \end{align*}
  It follows from \autoref{Prop_AbstractDecay} that $F(L) \lesssim e^{-2\gamma L}$ for some $\gamma > 0$.
  From interior estimates it follows that
  \begin{equation*}
    \abs{\nabla_\epsilon f_\epsilon} \lesssim e^{-\gamma\ell}
  \end{equation*}
  on $V\setminus V_{L_0}$ and
  \begin{equation*}
    \Abs{\nabla_\epsilon f_\epsilon}_{L^\infty(\tilde V_\epsilon\setminus U)} \leq c_U.
  \end{equation*}
  This implies the assertion by integrating back from the end of $V$.
\end{proof}

The $L^2$ curvature bound asserted in \autoref{Thm_ACylDUY} is a consequence of the following proposition.

\begin{prop}
  \label{Prop_L2CurvatureBound}
  For each $\epsilon \in (0,1]$, we have
  \begin{equation*}
    \Abs*{F_{\tilde H_\epsilon}}_{L^2(\tilde V_{\epsilon,L})}
      \lesssim L+1.
  \end{equation*}
\end{prop}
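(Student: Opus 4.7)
The plan is to apply the pointwise Chern--Weil identity \eqref{Eq_TopologicalEnergyBound} together with the PHYM condition $K_{\tilde H_\epsilon} = 0$ in order to convert the $L^2$ norm of the trace-free curvature into a topological integral, and then bound that integral by comparison with the fixed reference metric $\tilde H_1$ via Bott--Chern transgression. The trace part of $F_{\tilde H_\epsilon}$ is handled directly, because the induced metric on $\det \tilde \sE$ is held fixed.

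Write $F_{\tilde H_\epsilon} = F_{\tilde H_\epsilon}^\circ + \frac{1}{r}\tr(F_{\tilde H_\epsilon})\cdot \id$ with $r := \rk \sE$. Since $\det \tilde H_\epsilon = \tilde h$ is a fixed asymptotically translation-invariant metric, $\tr F_{\tilde H_\epsilon} = F_{\tilde h}$ is independent of $\epsilon$. On $\tilde V_{\epsilon,L_0}$ the bound $\Abs{F_{\tilde h}}_{L^2_{g_\epsilon}}^2 \leq c$ follows from the scaling estimate in \autoref{Step_FL2KCKBound} (which only uses $\codim S \geq 3$); on the cylindrical end, $g_\epsilon$ agrees with $g$ and $F_{\tilde h}$ is asymptotically translation-invariant, so that contribution grows linearly in $L$.

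For the trace-free part, $K_{\tilde H_\epsilon} = 0$ combined with \eqref{Eq_TopologicalEnergyBound} gives
\begin{equation*}
  c\int_{\tilde V_{\epsilon,L}} \abs{F_{\tilde H_\epsilon}^\circ}^2 \dvol_\epsilon
  =
  \int_{\tilde V_{\epsilon,L}} q_4(\tilde H_\epsilon) \wedge \omega_\epsilon^{n-2}.
\end{equation*}
The Bott--Chern transgression yields $q_4(\tilde H_\epsilon) - q_4(\tilde H_1) = \del\delbar \tilde q_4(\tilde H_1, \tilde H_\epsilon)$ for a secondary form $\tilde q_4$ polynomial in $\tilde s_\epsilon$ and $F_{\tilde H_1}$; combined with $d\omega_\epsilon = 0$, Stokes' theorem splits the integral above as
\begin{equation*}
  \int_{\tilde V_{\epsilon,L}} q_4(\tilde H_1) \wedge \omega_\epsilon^{n-2}
  \;+\;
  \int_{\partial \tilde V_{\epsilon,L}} \delbar \tilde q_4 \wedge \omega_\epsilon^{n-2}.
\end{equation*}

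The boundary $\partial \tilde V_{\epsilon,L}$ lies in the cylindrical region, disjoint from the exceptional divisor, so $g_\epsilon = g$ is independent of $\epsilon$ there; moreover, \autoref{Prop_BSAPrioriEstimate} supplies uniform $C^k$ bounds on $\tilde s_\epsilon$ in this region, so the boundary integral is uniformly bounded in both $\epsilon$ and $L$. The bulk integral is controlled, via \eqref{Eq_TopologicalEnergyBound} for $\tilde H_1$ and dropping the nonpositive $-\abs{K_{\tilde H_1,\epsilon}}^2$ contribution, by $c \Abs{F_{\tilde H_1}}_{L^2(\tilde V_{\epsilon,L})}^2$, which in turn decomposes as the uniformly bounded compact contribution from \autoref{Step_FL2KCKBound} plus linear-in-$L$ growth on the cylindrical end. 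The main technical point is isolating the Bott--Chern secondary form $\tilde q_4$ and checking the uniform bound for its restriction to $\partial V_L$: this is precisely where \autoref{Prop_BSAPrioriEstimate} is indispensable, since no such uniform control on $\tilde s_\epsilon$ is available near $\pi^{-1}(S)$, only on its complement.
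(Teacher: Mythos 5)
Your proof is correct and follows essentially the same route as the paper: reduce to $F^\circ_{\tilde H_\epsilon}$, use $K_{\tilde H_\epsilon}=0$ with \eqref{Eq_TopologicalEnergyBound} to pass to $\int q_4(\tilde H_\epsilon)\wedge\omega_\epsilon^{n-2}$, transgress to $\tilde H_1$ and bound the resulting boundary term by \autoref{Prop_BSAPrioriEstimate}, then control the bulk $\int\abs{F^\circ_{\tilde H_1}}^2_{g_\epsilon}$ by the scaling comparison from \autoref{Step_FL2KCKBound} plus linear growth on the cylinder. The only cosmetic difference is that you use the Bott--Chern $\del\delbar$ secondary form where the paper uses the ordinary $d$--transgression $q_4(\tilde H_\epsilon)-q_4(\tilde H_1)=\rd\tau$; both lead to the same boundary integral over $\del\tilde V_{\epsilon,L}$ after Stokes.
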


\begin{proof}
  Since $\tilde h$ is fixed, it suffices to estimate $F_{\tilde H_\epsilon}^{\circ}$, the curvature of the $\PU(r)$--connection induced by $\tilde H_\epsilon$.

  For each fixed $\epsilon \in (0,1]$, we have a bound of the desired form;
  however, it might a priori depend on $\epsilon$.
  To see that it does not, we use a topological argument.
  With $q_4$ as defined in \eqref{Eq_TopologicalEnergyBound} we have
  \begin{equation*}
    q_4(\tilde H_\epsilon) - q_4(\tilde H_1)
    = \rd \tau(\tilde {s}_\epsilon)
  \end{equation*}
  where $\tau$ is the transgression form associated with $q_4$ and can be bounded in terms of $\abs{\tilde{s}_\epsilon}$ and $\abs{\nabla_{\tilde H}\tilde{s}_\epsilon}$.
  Using \eqref{Eq_TopologicalEnergyBound} and $K_{\tilde H_\epsilon} = 0$, we derive
  \begin{align*}
    \int_{\tilde V_L} \abs*{F^{\circ}_{\tilde H_\epsilon}}^2 \vol_\epsilon
    & \lesssim
      \int_{\tilde V_L} q_4(\tilde H_\epsilon) \wedge \omega_\epsilon^{n-2} \\
    &=
      \int_{\tilde V_L} (q_4(\tilde H_1) + \rd \tau) \wedge \omega_\epsilon^{n-2} \\
    &\lesssim
      \int_{\tilde V_L} \abs*{F^{\circ}_{\tilde H_1}}_{g_\epsilon}^2 \vol_\epsilon + 1 \\
    &\lesssim
      \int_{\tilde V_L} \abs*{F^{\circ}_{\tilde H_1}}_{g_1}^2 \vol_1 + 1 \\
    &\lesssim
      L+1.
  \end{align*}
  Here the second term in the third step arises from Stokes' theorem and the fourth step uses the argument from \autoref{Step_FL2KCKBound} in the proof of \autoref{Prop_BSAPrioriEstimate}.
\end{proof}

This finishes the proof of \autoref{Thm_ACylDUY}.


\section{Uniqueness of  \texorpdfstring{$\P$HYM}{PHYM} metrics}
\label{Sec_Uniqueness}

We have the following basic uniqueness result for asymptotically translation-invariant $\P$HYM metrics.

\begin{prop}
  Let $\sE$ be a reflexive sheaf over $V$ asymptotic to $\sE_D$ and let $h$ be an asymptotically translation-invariant Hermitian metric on $\det E$.
  If $\sE_D$ is simple, then there exist at most one asymptotically translation-invariant $\P$HYM metric on $\sE$ inducing $h$.
\end{prop}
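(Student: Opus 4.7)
Let $H_1$ and $H_2$ be two asymptotically translation-invariant $\P$HYM metrics on $\sE$ inducing $h$, and set $s := \log(H_1^{-1} H_2)$, a smooth section of $i\su(E,H_1)$ on $V \setminus S$. The plan is to show $s \equiv 0$ by a maximum principle argument applied to the subharmonic function $\abs{s}^2$. The first step is to match the asymptotic limits: each $H_i$ is asymptotic to some Hermitian metric $H_{D,i}$ on $\sE_D$, and passing to the limit $\ell\to\infty$ in $K_{H_i} = 0$ shows that $H_{D,i}$ is $\P$HYM on $\sE_D$; since both $H_i$ induce the same metric on $\det E$, both $H_{D,i}$ induce the common limit on $\det \sE_D$. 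On a simple compact K\"ahler bundle the $\P$HYM metric with prescribed determinant is unique: applying the identity \eqref{Eq_DeltaS2} on compact $D$ shows that $\sigma := \log(H_{D,1}^{-1} H_{D,2}) \in i\su(\sE_D)$ has $\Delta\abs{\sigma}^2 \leq 0$ on $D$, hence is constant; the equality case then forces $\nabla \sigma = 0$, i.e.\ $\sigma$ is a parallel (hence holomorphic) trace-free endomorphism of a simple bundle, which must vanish. Therefore $H_{D,1} = H_{D,2}$, and $s \in C^\infty_{\delta'}(V\setminus S, i\su(E,H_1))$ for some $\delta' > 0$, so in particular $\abs{s}^2 \to 0$ at infinity.

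Next, I establish subharmonicity on $V \setminus S$. Writing $H_2 = H_1 e^s$ with $H_1$ playing the role of the reference metric, the fact that $H_2$ is $\P$HYM is equivalent to $\fL(s,0) = \Ad(e^{s/2}) K_{H_2} = 0$. Applying the identity \eqref{Eq_DeltaS2} with $t = 0$ and $K_{H_0} = K_{H_1} = 0$ yields
\begin{equation*}
  \Delta\abs{s}^2 \leq 0 \quad \text{on } V \setminus S.
\end{equation*}

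Finally, I invoke the maximum principle. Since $\sE$ is reflexive, $S$ has complex codimension at least three, hence real codimension at least six. Granting that $\abs{s}$ is locally bounded in a neighborhood of $S$, removable-singularity theory extends $\abs{s}^2$ to a subharmonic function on all of $V$; combined with $\abs{s}^2 \to 0$ at infinity, the maximum principle forces $\abs{s}^2 \equiv 0$, and hence $H_1 = H_2$. The main technical obstacle is precisely the local boundedness of $\abs{s}$ across $S$, which is not immediate from the definition of asymptotically translation-invariant $\P$HYM metric. I expect to handle it either by a Bando--Siu-type interior regularity argument using $\abs{F_{H_i}} \in L^2_{\loc}$ for $i=1,2$, or by testing $\Delta\abs{s}^2 \leq 0$ against a family of cutoffs vanishing in shrinking neighborhoods of $S$ and passing to the limit; the high codimension of $S$ then makes the boundary contributions near the singular set vanish.
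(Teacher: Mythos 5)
Your overall strategy---match the asymptotic limits via uniqueness over $D$, produce a subharmonic quantity on $V\setminus S$, and conclude by a maximum principle---is the right one and parallels the paper's. The compact-case uniqueness argument for $H_{D,1}=H_{D,2}$ is correct, and the observation that \eqref{Eq_DeltaS2} with $t=0$ and $K_{H_1}=0$ gives $\Delta\abs{s}^2\leq 0$ on $V\setminus S$ is also correct. The difference, and where the gap lives, is in the choice of subharmonic quantity and in how you propose to cross the singular set.

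The paper works with $\log\tr e^s$ rather than $\abs{s}^2$. By Siu's inequality one has $\Delta\log\tr e^s\leq 0$ on $V\setminus S$, and the decisive advantage is that the argument in the proof of \citet[Theorem~2(a)]{Bando1994} yields $\log\tr e^s\in W^{1,2}_\loc(V)$ \emph{directly from the $L^2_\loc$ curvature control}---no a priori $L^\infty$ bound near $S$ is needed as input. A function that is weakly subharmonic away from a set of real codimension at least six and lies in $W^{1,2}_\loc$ is weakly subharmonic across that set, so the decay of $s$ along the end gives $\log\tr e^s\leq\log\rk\sE$ on all of $V$; the inequality of arithmetic and geometric means then gives $\log\tr e^s\geq\log\rk\sE$ with equality iff $s=0$, and the argument is closed.

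Your version asks for strictly more, and the fixes you float do not straightforwardly deliver it. The Bando--Siu interior estimate (\autoref{Thm_BSInteriorEstimate}) takes $\Abs{s}_{L^\infty}$ as an \emph{input}, so invoking it here is circular. The cutoff argument still needs some a priori integrability of $\abs{s}^2$ (or $\nabla\abs{s}^2$) near $S$ to make the boundary contributions vanish---high codimension alone does not control a quantity with no assumed rate of blow-up. This is precisely what the choice $\log\tr e^s$ buys: because $\tr s=0$ the largest eigenvalue controls $\abs{s}$, and $\log\tr e^s$ has the Lipschitz-in-$s$ structure needed so that $L^2_\loc$ curvature bounds give $W^{1,2}_\loc$ regularity, which suffices to cross $S$. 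To salvage your route you would first need an independent local boundedness result for $s$ near $S$, and establishing that is the hard part that the paper's maneuver is designed to avoid.
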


\begin{proof}
  If $H_0$ and $H$ were two asymptotically translation-invariant $\P$HYM metrics inducing $h$, then they must be asymptotic to the same $\P$HYM metric $H_D$ on $\sE_D$ (by uniqueness in the compact case).
  Then, for some $\delta > 0$,
  \begin{equation*}
    s := \log(H_0^{-1} H) \in C^\infty_\delta(V\setminus S,i\su(E,H_0)).
  \end{equation*}
  Moreover, by \cite[p.~13]{Siu1987},
  \begin{equation*}
    \Delta \log \tr e^s \leq 0
  \end{equation*}
  on $V\setminus S$.
  The argument in the proof of \cite[Theorem 2(a)]{Bando1994} shows that $\log \tr e^s \in W^{1,2}_\loc(V)$; hence, $\log \tr e^s$ is weakly subharmonic and thus $\log \tr e^s \leq \log \rk \sE$.
  However, because of the inequality of arithmetic and geometric means, $\log \tr e^s \geq \log \rk \sE$ with equality if and only if $s = 0$.
\end{proof}


\appendix
\section{Useful formulae for Chern connections}
\label{Sec_UsefulFormulae}

Let $\sE = (E,\delbar)$ be a rank $r$ holomorphic vector bundle.
Given a Hermitian metric $H$ on $\sE$, there exists a unique Hermitian covariant derivative $\nabla = \nabla_H$ on $E$ such that $\nabla_H^{0,1} = \delbar$.
The connection $A_H$ associated with $\nabla_H$ is called the \defined{Chern connection} induced by $H$.

Fix a Hermitian metric $H_0$ and $s \in i\fu(E,H_0)$.
Set
\begin{equation*}
  \tilde A_s := e^{s/2}_* A_H.
  \end{equation*}
Since $e^{s/2}_*H = H_0$, both $\tilde A_0 = A_{H_0}$ and $\tilde A_s$ are connections on the principal $\U(r)$--bundle $\U(E,H_0)$.
Set
\begin{equation*}
  \fK(s) := \Ad(e^{s/2})K_{H_0e^s}.
\end{equation*}
All of the following results can be found in \cite[Section 6.1]{Luebke2006}, in the setting of holomorphic principal bundles.
We summarise them here for the reader's convenience.

\begin{prop}
  \label{Prop_KH0ExpS}
  We have
  \begin{align*}
    \fK(s)
    &=
      (2\cosh(\ad_{s/2})-1) K_{H_0} \\ &\quad
      + \frac12\Theta(s) \nabla_{H_0}^*\nabla_{H_0} s \\ &\quad
      + \frac{i}2\Lambda\(\delbar \Upsilon(-s/2) \wedge \del_{H_0} s\)
      - \frac{i}2\Lambda\(\del_{H_0}\Upsilon(s/2) \wedge \delbar s\) \\ &\quad
      - \frac{i}4\Lambda\(
         \Upsilon(-s/2)\del_{H_0}s \wedge \Upsilon(s/2)\delbar s
         + \Upsilon(s/2)\delbar s\wedge \Upsilon(-s/2)\del_{H_0}s
        \)
  \end{align*}
  with $\Upsilon(s) \in \End(\gl(E))$ defined by
  \begin{equation}
    \label{Eq_Upsilon}
    \Upsilon(s)
    := \frac{e^{\ad_s}-1}{\ad_s}
  \end{equation}
  and $\Theta(s) \in \End(\gl(E))$ defined by
  \begin{equation*}
    \Theta(s)
    := \frac{\Upsilon(s/2)+\Upsilon(-s/2)}{2}.
  \end{equation*}
\end{prop}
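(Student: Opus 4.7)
The plan is to carry out the computation in two stages: first compute $K_{H_0 e^s}$ itself as a function of $s$ and its derivatives, then conjugate by $e^{s/2}$ and symmetrize. The starting point is the standard identity expressing the Chern curvature of $H := H_0 e^s$ in terms of that of $H_0$:
\begin{equation*}
  F_H = F_{H_0} + \delbar\bigl(h^{-1}\del_{H_0}h\bigr), \qquad h := e^s.
\end{equation*}
Here $\del_{H_0}$ denotes the $(1,0)$-part of the Chern connection of $H_0$ extended to $\gl(E)$. Applying $i\Lambda$ and subtracting the trace piece $\tfrac{\tr(i\Lambda F_H)}{\rk \sE}\cdot \id$ will give a formula for $K_H$; since $s$ itself is trace-free the trace correction is unchanged from the one built into $K_{H_0}$.

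The first technical ingredient is the integral formula $\frac{d}{dt}\big|_{t=0}e^{s + t\dot s} = \int_0^1 e^{(1-u)s}\dot s\,e^{us}\,du$, which yields
\begin{equation*}
  e^{-s}\del_{H_0}e^{s} = \Upsilon(-s)\del_{H_0}s, \qquad \Upsilon(-s) = \frac{1 - e^{-\ad_s}}{\ad_s}.
\end{equation*}
The next step is to apply $\delbar$ to the $(1,0)$-form $\Upsilon(-s)\del_{H_0}s$. Since $\Upsilon(-s)$ depends on $s$ through its power series in $\ad_s$, the Leibniz rule produces two kinds of contributions: a term $\Upsilon(-s)\,\delbar\del_{H_0}s$, and wedge terms of the form $(\delbar\Upsilon(-s))\wedge \del_{H_0}s$ which, after expanding the power series, can be reorganized as a single contraction involving $\delbar s$, $\del_{H_0}s$ and an operator built from $\Upsilon(\pm s/2)$. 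For the first piece we use the commutator identity $(\delbar\del_{H_0} + \del_{H_0}\delbar)s = [F_{H_0},s]$ valid on $\gl(E)$-valued sections, together with the Kähler identity $-2i\Lambda\,\delbar\del_{H_0}s = \nabla_{H_0}^*\nabla_{H_0}s + (\text{terms in } i\Lambda F_{H_0})$, to produce the $\nabla_{H_0}^*\nabla_{H_0}s$ contribution and extract an additional $i\Lambda F_{H_0}$-dependent summand.

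The final stage is to apply $\Ad(e^{s/2}) = e^{\ad_{s/2}}$ to the resulting expression for $K_{H_0 e^s}$. This is where the asymmetric $\Upsilon(-s)$ factors are converted into the symmetric combinations $\Upsilon(\pm s/2)$ and $\Theta(s) = \tfrac12(\Upsilon(s/2) + \Upsilon(-s/2))$ appearing in the claim. The mechanism is a pair of operator identities: $\Ad(e^{s/2})\circ \Upsilon(-s) \circ \Ad(e^{-s/2}) $ splits symmetrically upon reassembling the $\delbar s$- and $\del_{H_0}s$-factors, and $\Ad(e^{s/2}) K_{H_0}$ combined with the extra curvature summand generated in Step 3 collapses into $(2\cosh(\ad_{s/2}) - 1)K_{H_0}$ via $e^{\ad_{s/2}} + e^{-\ad_{s/2}} - 1 = 2\cosh(\ad_{s/2})-1$.

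The main obstacle is not conceptual but combinatorial: one must track non-commutative products of $s$, $\del_{H_0}s$, and $\delbar s$ through the power series of $\Upsilon(-s)$ after differentiating, and then verify that all resulting expressions collapse into the symmetric $\Upsilon(\pm s/2)$ / $\Theta(s)$ form after conjugation by $e^{s/2}$. This is precisely the bookkeeping carried out in \cite[Section 6.1]{Luebke2006} for holomorphic principal bundles, which translates line-by-line to the present setup since $\sE$ is a vector bundle and all operations are local and algebraic in $s$.
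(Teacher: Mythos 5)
Your proposal is correct in outline but takes a genuinely different computational route from the paper's.  You compute $K_{H_0 e^s}$ in the original unitary gauge for $H_0$---getting the asymmetric factor $\Upsilon(-s)\del_{H_0}s$ from $e^{-s}\del_{H_0}e^s$---and defer the conjugation by $e^{s/2}$ to the very end.  The paper instead gauge-transforms the Chern connection \emph{first}: it introduces $\tilde A_s := e^{s/2}_*A_{H_0e^s}$ and observes that
\begin{equation*}
  \del_{\tilde A_s} = \del_{H_0} + e^{-s/2}(\del_{H_0}e^{s/2}),
  \qquad
  \delbar_{\tilde A_s} = \delbar - (\delbar e^{s/2})e^{-s/2},
\end{equation*}
so that the symmetric connection form
\begin{equation*}
  \tilde A_s - A_{H_0} = \tfrac12\Upsilon(-s/2)\del_{H_0}s - \tfrac12\Upsilon(s/2)\delbar s
\end{equation*}
appears immediately, with the $s/2$-symmetric $\Upsilon$-factors already in place.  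Computing $F_{\tilde A_s}$ from this is then direct, and $\Ad(e^{s/2})K_{H_0 e^s} = i\Lambda F_{\tilde A_s} - (\text{trace part})$ without any further conjugation.  Your route requires pushing $\Ad(e^{s/2})$ through $\delbar$ and $\del_{H_0}$ at the end, which generates additional commutator cross-terms to track; the operator identity $e^{\ad_{s/2}}\Upsilon(-s)=\Theta(s)$ indeed handles the $\delbar\del_{H_0}s$-term cleanly, but the quadratic wedge terms need the full non-commutative Leibniz bookkeeping you flag.  Both paths arrive at the same formula; the paper's choice of frame simply makes the symmetrization automatic rather than a separate step.  Your appeal to the Kähler identities and to the decomposition of the rough Laplacian is the same as the paper's, as is the final reference to Lübke--Teleman for the explicit bookkeeping, so the substance agrees.
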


\begin{remark}
  Since $\ad_s := [s,\cdot] \in \End(\gl(E))$ is self-adjoint with respect to $H_0$, so is $\Upsilon(s)$.
  Both $\cosh(\ad_{s/2})$ and $\Theta(s)$ preserve $\fu(E,H_0)$ because their power series expansions involve only even powers of $\ad_s$ and $\ad_s^2$ preserves $\fu(E,H_0)$.
  Also note that $\Theta(s)$ self-adjoint with respect to $H_0$ and its first eigenvalue is at least one.
\end{remark}

\begin{proof}[Proof of \autoref{Prop_KH0ExpS}]
  Since $\del_{H_0e^s} = \del_{H_0} + e^{-s}\del_{H_0}e^{s}$, we have
  \begin{align*}
    \del_{\tilde A_s} 
    &=
      e^{s/2}(\del_{H_0} + e^{-s}(\del_{H_0} e^s))e^{-s/2} \\
    &=
      \del_{H_0} + e^{s/2}\del_{H_0}e^{-s/2} + e^{-s/2}(\del_{H_0} e^s)e^{-s/2} \\
    &=
      \del_{H_0}
      +e^{-s/2}(\del_{H_0} e^{s/2})
  \end{align*}
  and
  \begin{align*}
    \delbar_{\tilde A_s} 
    =
      e^{s/2} \delbar e^{-s/2} 
    =
      \delbar+
      e^{s/2}(\delbar e^{-s/2})
      =
      \delbar -
      (\delbar e^{s/2})e^{-s/2}.
  \end{align*}
  Using
  \begin{equation*}
    \rd_x \exp (y)
    = (\Upsilon(x) y) e^{x}
    = e^{x} (\Upsilon(-x) y)
  \end{equation*}
  we obtain
  \begin{equation*}
    \tilde A_s
    =
      \tilde A_0 + \frac12\Upsilon(-s/2)\del_{H_0}s - \frac12\Upsilon(s/2)\delbar s
  \end{equation*}

  From this it follows that
  \begin{align*}
    F_{\tilde A_s}
    &=
      F_{H_0}
      + \frac12\Upsilon(-s/2) \delbar\del_{H_0} s
      - \frac12\Upsilon(s/2)\del_{H_0}\delbar s \\ &\quad
      + \frac12\delbar \Upsilon(-s/2) \wedge \del_{H_0} s
      - \frac12\del_{H_0}\Upsilon(s/2) \wedge \delbar s \\ &\quad
      - \frac14\(
         \Upsilon(-s/2)\del_{H_0}s \wedge \Upsilon(s/2)\delbar s
         + \Upsilon(s/2)\delbar s\wedge \Upsilon(-s/2)\del_{H_0}s
        \).
  \end{align*}
  Applying $i\Lambda$ and using the Kähler identities
  \begin{equation*}
    i[\Lambda,\delbar] = \del_{H_0}^*
    \qandq
    i[\Lambda,\del_{H_0}] = -\delbar^*
  \end{equation*}
  as well as
  \begin{equation*}
    \del_{H_0}^*\del_{H_0} = \frac12 \nabla_{H_0}^*\nabla_{H_0} - [K_{H_0},\cdot]
    \qandq
    \delbar^*\delbar = \frac12 \nabla_{H_0}^*\nabla_{H_0} + [K_{H_0},\cdot],
  \end{equation*}
  we obtain
  \begin{align*}
    e^{s/2}K_{H_0e^s}
    &=
      K_{H_0} + \frac12(\Upsilon(s/2)-\Upsilon(-s/2))\ad_s K_{H_0} \\ &\quad
      + \frac14(\Upsilon(s/2)+\Upsilon(-s/2)) \nabla_{H_0}^*\nabla_{H_0} s \\ &\quad
      + \frac{i}2\Lambda\(\delbar \Upsilon(-s/2) \wedge \del_{H_0} s\)
      - \frac{i}2\Lambda\(\del_{H_0}\Upsilon(s/2) \wedge \delbar s\) \\ &\quad\quad
      - \frac{i}4\Lambda\(
         \Upsilon(-s/2)\del_{H_0}s \wedge \Upsilon(s/2)\delbar s
         + \Upsilon(s/2)\delbar s\wedge \Upsilon(-s/2)\del_{H_0}s
        \).
  \end{align*}
  This implies the asserted identity.
\end{proof}

\begin{prop}
  \label{Prop_DerivativeKH0ExpS}
  We have
  \begin{align*}
    \rd_s \fK(\hat s)
    &=
      \frac12\nabla_{\tilde A_s}^*\nabla_{\tilde A_s} \Ad(e^{s/2})\Upsilon(-s)\hat s.
  \end{align*}
\end{prop}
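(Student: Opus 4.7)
The strategy is gauge-theoretic. Since curvature is equivariant under gauge transformations ($F_{g\cdot A} = \Ad(g) F_A$), if we set $\tilde A_s := (e^{s/2})_* A_{H_0 e^s}$---the Chern connection of $H_0 e^s$ pushed forward to a $\U(E, H_0)$-connection by the positive $H_0$-self-adjoint automorphism $e^{s/2}$---then $\fK(s) = \Ad(e^{s/2}) K_{H_0 e^s}$ is precisely the trace-free part of $i\Lambda F_{\tilde A_s}$. Consequently, differentiating $\fK$ reduces to computing the variation $\alpha_{\hat s} := \rd_s \tilde A_s \cdot \hat s \in \Omega^1(\fu(E,H_0))$ and then applying $i\Lambda \rd_{\tilde A_s}$; by the Kähler identities $[i\Lambda, \partial_{\tilde A_s}] = -\bar\partial^*_{\tilde A_s}$ and $[i\Lambda, \bar\partial_{\tilde A_s}] = \partial^*_{\tilde A_s}$, the latter simplifies to $-\bar\partial^*_{\tilde A_s}\alpha_{\hat s}^{0,1} + \partial^*_{\tilde A_s}\alpha_{\hat s}^{1,0}$.

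To compute $\alpha_{\hat s}$ I would write $\tilde A_s = g \cdot A_{H(s)}$ with $g := e^{s/2}$ and $H(s) := H_0 e^s$, and use the standard formula for the variation of a gauge-transformed connection, namely $\alpha_{\hat s} = -\rd_{\tilde A_s}\chi + \Ad(g)\,\dot A_{H(s)}$, where $\chi := (\rd_u g)g^{-1}|_{u=0} = \tfrac12 \Upsilon(s/2)\hat s$. The standard variation-of-Chern-connection formula gives $\dot A_{H(s)} = \partial_{H(s)}\eta$ for $\eta := H(s)^{-1}\dot H(s) = \Upsilon(-s)\hat s$, which is $H(s)$-self-adjoint thanks to the identity $\Ad(e^{-s})\Upsilon(s) = \Upsilon(-s)$; in particular $\dot A_{H(s)}$ is of pure type $(1,0)$. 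Equivariance of the induced Chern connection on $\End E$ under $\Ad(g)$ then yields $\Ad(g)\,\dot A_{H(s)} = \partial_{\tilde A_s}\xi$, where
\begin{equation*}
  \xi := \Ad(e^{s/2})\Upsilon(-s)\hat s = \Theta(s)\hat s = \text{sinhc}(\ad_{s/2})\hat s
\end{equation*}
is $H_0$-self-adjoint because $\text{sinhc}$ is an even function of $\ad_{s/2}$. Combining, one finds $\alpha_{\hat s}^{0,1} = -\tfrac12 \bar\partial_{\tilde A_s}(\Upsilon(s/2)\hat s)$, and by Hermitian symmetry $\alpha_{\hat s}^{1,0} = \tfrac12 \partial_{\tilde A_s}(\Upsilon(-s/2)\hat s)$.

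Substituting into the Kähler-identity expression and using the Bochner-type identity $\bar\partial^*_{\tilde A_s}\bar\partial_{\tilde A_s} + \partial^*_{\tilde A_s}\partial_{\tilde A_s} = \nabla^*_{\tilde A_s}\nabla_{\tilde A_s}$ on sections of $\End E$, together with the algebraic identity $\Upsilon(s/2)\hat s + \Upsilon(-s/2)\hat s = 2\Theta(s)\hat s = 2\xi$, the expression collapses onto the asserted $\tfrac12 \nabla^*_{\tilde A_s}\nabla_{\tilde A_s}\,\xi$. The main obstacle is keeping the algebraic bookkeeping clean: one has to juggle the three distinct operators $\Upsilon(s/2)$, $\Upsilon(-s/2)$, and $\Theta(s)$, and the pivotal identity is the ``sinhc'' relation $\Ad(e^{s/2})\Upsilon(-s) = \Theta(s)$, which singles out $\xi$ as the natural Hermitian piece common to both $\Upsilon(\pm s/2)\hat s$. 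An alternative, more mechanical route is to differentiate the explicit formula of \autoref{Prop_KH0ExpS} term by term using the Fréchet derivatives of $\Upsilon$ and $\Theta$ at matrix arguments; both routes lead to the same identity, which is well known in the compact setting (cf.\ Lübke--Teleman \cite{Luebke2006}).
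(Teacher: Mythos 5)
Your argument follows a genuinely different route from the paper's. The paper's proof is a three-line reduction to the case $s=0$: it observes that the identity is trivial at $s=0$ and then invokes the reference change $e^{s+t\hat s}=e^{s}\Ad(e^{-s/2})e^{\sigma_t}$ with $\dot\sigma_0=\Ad(e^{s/2})\Upsilon(-s)\hat s$, pushing the whole computation into a new frame in which $\tilde A_s$ becomes the base Chern connection. You, by contrast, work directly at the level of the connection: you compute the variation $\alpha_{\hat s}=\rd_s\tilde A_\bullet(\hat s)=-\rd_{\tilde A_s}\chi+\Ad(e^{s/2})\dot A_{H(s)}$ and then apply the Kähler identities. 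Your intermediate steps are correct (and the observation that $\Ad(e^{s/2})\Upsilon(-s)=\Theta(s)$, i.e.\ the \enquote{sinhc} of $\ad_{s/2}$, is a clean identity the paper never spells out), and this route has the merit of making the formula for $\alpha_{\hat s}^{1,0},\alpha_{\hat s}^{0,1}$ explicit, which the paper leaves implicit in its appeal to \autoref{Prop_KH0ExpS}.

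There is, however, a gap in the final step. After reducing to
\begin{equation*}
  i\Lambda\,\rd_{\tilde A_s}\alpha_{\hat s}
  = \tfrac12\delbar^*_{\tilde A_s}\delbar_{\tilde A_s}\bigl(\Upsilon(s/2)\hat s\bigr)
  + \tfrac12\del^*_{\tilde A_s}\del_{\tilde A_s}\bigl(\Upsilon(-s/2)\hat s\bigr),
\end{equation*}
you invoke $\delbar^*\delbar+\del^*\del=\nabla^*\nabla$ together with $\Upsilon(s/2)\hat s+\Upsilon(-s/2)\hat s=2\Theta(s)\hat s$ and conclude that this \enquote{collapses onto} $\tfrac12\nabla^*_{\tilde A_s}\nabla_{\tilde A_s}\Theta(s)\hat s$. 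But the identity $\delbar^*\delbar+\del^*\del=\nabla^*\nabla$ only tells you what happens when both operators act on the \emph{same} argument; here $\delbar^*_{\tilde A_s}\delbar_{\tilde A_s}$ hits $\Upsilon(s/2)\hat s$ while $\del^*_{\tilde A_s}\del_{\tilde A_s}$ hits $\Upsilon(-s/2)\hat s$. Writing $\beta:=\Upsilon(s/2)\hat s$ and $\beta^*=\Upsilon(-s/2)\hat s$ and decomposing into symmetric and antisymmetric parts, one finds
\begin{equation*}
  \tfrac12\delbar^*\delbar\,\beta+\tfrac12\del^*\del\,\beta^*
  = \tfrac12\nabla^*_{\tilde A_s}\nabla_{\tilde A_s}\Theta(s)\hat s
  + \tfrac14\bigl(\delbar^*\delbar-\del^*\del\bigr)\bigl(\Upsilon(s/2)-\Upsilon(-s/2)\bigr)\hat s,
\end{equation*}
and by the Bochner--Kodaira--Nakano identity the second term is a commutator with $i\Lambda F_{\tilde A_s}$, not zero in general (it vanishes only at $s=0$ or when $\fK(s)$ is central). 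You need to explain why this term is absent---for instance by a more careful appeal to \autoref{Prop_KH0ExpS} or by following the paper's reference-change argument, which sidesteps the issue by applying the Bochner identity only in the base case $s=0$. As written, this last step does not establish the claimed formula.
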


\begin{proof}
  The asserted identity clearly holds at $s = 0$.
  To prove the general case, note that if $\sigma_t$ satisfies
  \begin{equation*}
    e^{s+t\hat s} = e^{s}\Ad(e^{-s/2})e^{\sigma_t},
  \end{equation*}
  then
  \begin{equation*}
    \left.\frac{\rd}{\rd t}\right|_{t=0}\sigma_t
    =
    \Ad(e^{s/2})\Upsilon(-s)\hat s.
    \qedhere
  \end{equation*}
\end{proof}

\begin{prop}
  \label{Prop_UpsilonHatSS}
  We have
  \begin{equation*}
    \inner{\Ad(e^{s/2})\Upsilon(-s)\hat s}{\hat s} \geq \abs{\hat s}^2.
  \end{equation*}
\end{prop}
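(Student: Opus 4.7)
The plan is to rewrite the operator $\Ad(e^{s/2})\Upsilon(-s)$ as $\sinh(\ad_{s/2})/\ad_{s/2}$ via formal functional calculus, and then exploit the fact that $\ad_{s/2}$ is self-adjoint on $\gl(E)$ so that the scalar inequality $\sinh(t)/t \geq 1$ for $t \in \R$ transfers directly to the operator inequality we want.

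Concretely, first I would observe that since $\ad_{-s}=-\ad_s$,
\begin{equation*}
  \Upsilon(-s) \;=\; \frac{e^{\ad_{-s}}-1}{\ad_{-s}} \;=\; \frac{1 - e^{-\ad_s}}{\ad_s}.
\end{equation*}
Since $\Ad(e^{s/2})=e^{\ad_{s/2}}$ commutes with every function of $\ad_s$, I can multiply the two operators in the same functional calculus to get
\begin{equation*}
  \Ad(e^{s/2})\,\Upsilon(-s) \;=\; e^{\ad_{s/2}}\cdot\frac{1-e^{-\ad_s}}{\ad_s}
  \;=\; \frac{e^{\ad_{s/2}}-e^{-\ad_{s/2}}}{\ad_s}
  \;=\; \frac{\sinh(\ad_{s/2})}{\ad_{s/2}}.
\end{equation*}
The second step I would carry out is to note that because $s \in i\fu(E,H_0)$, i.e.\ $s$ is $H_0$--self-adjoint, the operator $\ad_{s/2}$ on $\gl(E)$ is self-adjoint with respect to the Hermitian--Frobenius inner product $\inner{A}{B}=\tr(AB^*)$; this is an immediate trace-cyclicity computation using $s^*=s$. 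Consequently $\sinh(\ad_{s/2})/\ad_{s/2}$, defined by its (even) power series $\sum_{n\geq 0}\ad_{s/2}^{\,2n}/(2n+1)!$, is self-adjoint and positive, and its spectrum lies in $[1,\infty)$ because $\sinh(t)/t\geq 1$ for all $t\in\R$ (with value $1$ at $t=0$). Applying this at every point of $V\setminus S$ to $\hat s$ yields the pointwise inequality
\begin{equation*}
  \inner{\Ad(e^{s/2})\Upsilon(-s)\hat s}{\hat s} \;=\; \inner*{\tfrac{\sinh(\ad_{s/2})}{\ad_{s/2}}\hat s}{\hat s} \;\geq\; \abs{\hat s}^2.
\end{equation*}

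I do not expect a real obstacle here; the only subtlety is getting the identification of the operator right the first time (keeping careful track of the signs in $\Upsilon(-s)$ and the factor $1/2$ in $\Ad(e^{s/2})$), after which the proof reduces to the elementary scalar inequality $\sinh(t)/t\geq 1$. If desired, one can alternatively diagonalise $\ad_{s/2}$ fibrewise in an $H_0$--orthonormal eigenbasis and check the inequality eigenvalue-by-eigenvalue, which avoids any appeal to functional calculus.
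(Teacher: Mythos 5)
Your proof is correct and takes essentially the same approach as the paper: both rewrite $\Ad(e^{s/2})\Upsilon(-s)$ as $\sinh(\ad_{s/2})/\ad_{s/2}$ and reduce to the scalar inequality $\sinh(t)/t \geq 1$ applied via the self-adjointness of $\ad_s$. You simply spell out the functional-calculus justification more explicitly than the paper does.
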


\begin{proof}
  Since $\Ad(e^{s/2})\Upsilon(-s) = e^{\ad_{s/2}}\Upsilon(-s)$, this follows by observing that
  \begin{equation*}
    \frac{e^{x/2}(1-e^{-x})}{x}
    = \frac{\sinh(x/2)}{x/2} \geq 1
  \end{equation*}
  for all $x \in \R$.
\end{proof}

\begin{prop}
  \label{Prop_LaplacianIdentity}
  We have
  \begin{equation}
    \label{Eq_LaplacianIdentity}
    \inner{\fK(s)-K_{H_0}}{s}
    = \inner{i\Lambda \delbar (e^{-s} \del_{H_0} e^s)}{s}
    = \frac14 \Delta \abs{s}^2 + \frac12\abs{\upsilon(-s)\nabla_{H_0}s}^2
  \end{equation}
  where $\upsilon(s) \in \End(\gl(E))$ is defined by $\upsilon(s) := \sqrt{\Upsilon(s)}$.
\end{prop}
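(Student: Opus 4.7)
I would prove \autoref{Prop_LaplacianIdentity} in two stages, matching the two equalities in \eqref{Eq_LaplacianIdentity}.

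\textbf{First equality.} Start from the standard formula $F_{H_0 e^s} = F_{H_0} + \delbar(e^{-s}\del_{H_0}e^s)$ for Chern curvatures. Since $s$ is trace-free, $\tr(e^{-s}\del_{H_0}e^s) = \del_{H_0}\log\det(e^s) = \del_{H_0}\tr(s) = 0$, so the trace correction in $K$ drops out and
\begin{equation*}
  K_{H_0 e^s} - K_{H_0} = i\Lambda\delbar(e^{-s}\del_{H_0}e^s).
\end{equation*}
Pointwise, because $[e^{\pm s/2},s] = 0$, cyclicity of the trace yields
\begin{equation*}
  \inner{\Ad(e^{s/2})X}{s}_{H_0}
  = \tr(e^{s/2}Xe^{-s/2}s)
  = \tr(Xs)
  = \inner{X}{s}_{H_0}.
\end{equation*}
Applied with $X = K_{H_0 e^s}$, this gives $\inner{\fK(s)}{s} = \inner{K_{H_0 e^s}}{s}$; subtracting $\inner{K_{H_0}}{s}$ from both sides yields the first equality.

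\textbf{Second equality.} This is a pointwise identity, so I would verify it at an arbitrary point $p \in V\setminus S$. Fix holomorphic normal coordinates around $p$ (so that $g_{i\bar j}(p) = \delta_{ij}$ and $\del g(p) = 0$) together with a holomorphic frame of $E$ orthonormal for $H_0$ with $\del H_0(p) = 0$, and then perform an additional $H_0$-unitary change of frame at $p$ to diagonalise $s(p) = \diag(\lambda_1,\ldots,\lambda_r)$. In this frame $A_{H_0}(p) = 0$, so $\del_{H_0} = \del$, $\delbar_{H_0} = \delbar$ and $\nabla_{H_0} = \nabla$ at $p$; and $\ad_{s(p)}$ acts on the standard basis $\set{E_{kl}}$ of $\End(E_p)$ with eigenvalue $\lambda_k - \lambda_l$, so that $\Upsilon(-s(p))$ and $\upsilon(-s(p))$ act diagonally with eigenvalues $u_{kl} = (1-e^{-(\lambda_k-\lambda_l)})/(\lambda_k-\lambda_l) > 0$ and $\sqrt{u_{kl}}$ respectively. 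Using $e^{-s}\del_{H_0}e^s = \Upsilon(-s)\del_{H_0}s$ together with the Kähler identity $i\Lambda\delbar = \del_{H_0}^*$ on $(1,0)$-forms, the LHS becomes $\inner{\del_{H_0}^*(\Upsilon(-s)\del_{H_0}s)}{s}_{H_0}$, which at $p$ expands via the Leibniz rule into
\begin{equation*}
  -\sum_i \Upsilon(-s)(\delbar_{\bar i}\del_i s)(p) \;-\; \sum_i [\delbar_{\bar i}\Upsilon(-s)(p)](\del_i s(p)),
\end{equation*}
where the operator $\delbar_{\bar i}\Upsilon(-s)$ is the derivative of $\Upsilon(-s)$ in direction $-\delbar_{\bar i}s$, expressible via divided differences of $\phi(z) = (1-e^{-z})/z$ in the diagonalising basis of $\ad_{s(p)}$.

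Pairing with $s(p)$, the first sum combines with the Bochner formula $\frac12\Delta|s|^2 = \inner{\nabla_{H_0}^*\nabla_{H_0}s}{s} - |\nabla_{H_0}s|^2$ and the Kähler--Weitzenböck identity $\nabla_{H_0}^*\nabla_{H_0}s = 2\delbar^*\delbar s - 2[K_{H_0},s]$ (using $\inner{[K_{H_0},s]}{s}=0$ by cyclicity) to account for $\frac14\Delta|s|^2$. The correction term, evaluated componentwise using $\delbar s = (\del s)^*$ and the explicit divided-difference eigenvalues of $\delbar_{\bar i}\Upsilon(-s)$, rearranges via the algebraic identity $u_{kl}+u_{lk} = 2\sinh(\lambda_k-\lambda_l)/(\lambda_k-\lambda_l)$ into $\frac12|\upsilon(-s)\nabla_{H_0}s|^2 - \frac12|\nabla_{H_0}s|^2$, which completes the match. \textbf{Main obstacle.} The delicate step is the componentwise divided-difference calculation and its matching with $\frac12|\upsilon(-s)\nabla_{H_0}s|^2$; a cleaner alternative is to start directly from the explicit formula in \autoref{Prop_KH0ExpS} for $\fK(s)$, observe that $\inner{(2\cosh(\ad_{s/2})-1)K_{H_0}}{s} = \inner{K_{H_0}}{s}$ (since $\ad_s(s)=0$), and then reduce the remaining $\Theta(s)\nabla_{H_0}^*\nabla_{H_0}s$ and wedge-quadratic contributions to the desired form by purely algebraic manipulation of the power series for $\Theta, \Upsilon, \upsilon$.
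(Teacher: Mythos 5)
Your argument is correct in outline, and the first equality (removing the $\Ad(e^{s/2})$ via cyclicity of the trace and $[e^{s/2},s]=0$) is handled exactly right; but the route you take for the second equality is substantially heavier than the paper's. You reduce to a pointwise identity in holomorphic normal coordinates, diagonalise $s(p)$, and push through divided-difference expansions of $\Upsilon$ and $\upsilon$ in the eigenbasis of $\ad_{s(p)}$ --- precisely the step you flag as the main obstacle. The paper sidesteps all of this with a short, frame-free Leibniz computation: starting from $e^{-s}\del_{H_0}e^s = \Upsilon(-s)\del_{H_0}s$, it applies $\delbar$ via the product rule to the scalar $(1,0)$-form $\inner{\Upsilon(-s)\del_{H_0}s}{s}$, converts $i\Lambda\delbar$ to $\del^*$ by the K\"ahler identity, and uses $\Upsilon(\pm s)s = s$ (a consequence of $\ad_s s = 0$) so that $\Upsilon$ drops out of the term that collapses to $\tfrac12\del^*\del\abs{s}^2 = \tfrac14\Delta\abs{s}^2$; the residual wedge term is read off directly as $\inner{\Upsilon(-s)\del_{H_0}s}{\del_{H_0}s} = \abs{\upsilon(-s)\del_{H_0}s}^2$. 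No normal frame, no Bochner or Weitzenb\"ock formula, and no eigenvalue bookkeeping appear. Your ``cleaner alternative'' (algebraic manipulation of the power series in \autoref{Prop_KH0ExpS}) is closer in spirit but still heavier than what is actually done: the paper never passes through the full curvature formula of \autoref{Prop_KH0ExpS} in this proof, only through the single identity $e^{-s}\del_{H_0}e^s = \Upsilon(-s)\del_{H_0}s$. Both of your routes would reach the result; the price is simply a longer and more error-prone computation.
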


\begin{proof}
  We compute
  \begin{align*}
    \inner{i\Lambda\delbar (e^{-s} \del_{H_0} e^s)}{s}
    &=
      \inner{i\Lambda\delbar (\Upsilon(-s)\del_{H_0}s)}{s} \\
    &=
      i\Lambda\delbar\inner{\Upsilon(-s)\del_{H_0}s}{s} + i\Lambda\winner{\Upsilon(-s)\del_{H_0}s}{\del_{H_0}s} \\
    &=
      \del^*\inner{\del_{H_0}s}{\Upsilon(s)s} + \inner{\Upsilon(-s)\del_{H_0}s}{\del_{H_0}s} \\
    &=
      \del^*\inner{\del_{H_0}s}{s} + \abs*{\upsilon(-s)\del_{H_0}s}^2  \\
    &=
      \frac12 \del^*\del \abs{s}^2 + \abs{\upsilon(-s)\del_{H_0}s}^2.
    \qedhere
  \end{align*}
\end{proof}


\section{The Donaldson functional}
\label{Sec_DonaldsonFunctional}

Let $(X,g,I)$ be a compact Kähler manifold, let $\sE$ be a holomorphic vector bundle over $X$.
Given metric $H_0$ and $s \in C^\infty(X,i\su(\sE,H_0))$, the value of the Donaldson functional at $(H_0,H_0e^s)$ is
\begin{equation*}
  \sM(H_0,H_0e^s)
  := \int_0^1 \int_X \inner{s}{\Ad(e^{us/2})K_{H_0e^{u s}}} \,\rd u.
\end{equation*}
This functional was introduced in \cite[Section 1.2]{Donaldson1985} and \cite[§II]{Donaldson1986}.
We refrain from a lengthy discussion and only marshal the following three facts, which are used in \autoref{Sec_APrioriEstimate}.

\begin{prop}[{\cite[Proposition 5.1]{Simpson1988}}]
  \label{Prop_MAdditivity}
  We have
  \begin{equation*}
    \sM(H_0,H_2) = \sM(H_0,H_1) + \sM(H_1,H_2).
  \end{equation*}
\end{prop}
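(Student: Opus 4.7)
The plan is to promote $\sM$ to a path-independent line integral on the space of Hermitian metrics on $\sE$ that induce a fixed metric on $\det \sE$; additivity then follows by concatenating paths. Since $s$ commutes with $e^{us/2}$, the endomorphism-trace pairing $\langle s, \Ad(e^{us/2}) K_{H_0 e^{us}}\rangle$ simplifies to $\tr(s\cdot K_{H_0 e^{us}})$, and noting $s = H_u^{-1}\dot H_u$ along the linear path $H_u = H_0 e^{us}$ recasts the definition as
\[
  \sM(H_0, H_0 e^s) = \int_0^1 \int_X \tr\bigl(H_u^{-1}\dot H_u \cdot K_{H_u}\bigr)\,\vol\,du.
\]
I would take this right-hand side as defining a functional $\tilde\sM(\{H_t\})$ on arbitrary smooth paths of Hermitian metrics joining $H_0$ to $H_1$.

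The core step is to prove $\tilde\sM$ depends only on the endpoints. Fixing $H_0, H_1$, I would take a smooth two-parameter family $H(r, t)$ with $H(r, 0) \equiv H_0$ and $H(r, 1) \equiv H_1$, and compute $\partial_r \tilde\sM(\{H(r, \cdot)\})$. Writing $\eta := H^{-1}\partial_r H$ and $\xi := H^{-1}\partial_t H$, the first variation of $K_H$ in the $\eta$-direction is a second-order operator whose leading term is $i\Lambda\delbar\del_H\eta$ plus non-commutativity corrections coming from $[\delbar,\eta]$ and $[\del_H,\eta]$. Using the Kähler identities $i[\Lambda, \delbar] = \del_H^*$ and $i[\Lambda, \del_H] = -\delbar^*$ together with two integrations by parts on compact $X$, the antisymmetric combination $\partial_r \tr(\xi K_H) - \partial_t \tr(\eta K_H)$ becomes a total $t$-divergence on $X$. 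Integrating in $t$ therefore leaves only boundary contributions at $t = 0, 1$, which vanish because the endpoints are independent of $r$.

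Once path-independence holds, the proposition is immediate: concatenating a smooth path from $H_0$ to $H_1$ with one from $H_1$ to $H_2$ yields a path from $H_0$ to $H_2$ whose $\tilde\sM$-integral is, by construction, the sum of the two segment integrals, and $\tilde\sM$ agrees with $\sM$ on straight-line paths by the first step. The main obstacle is the symmetrisation identity itself: tracking the several correction terms in the variation of $K_H$ arising from the non-commutativity of $\partial_r H$ and $\partial_t H$ with $\delbar$ and $\del_H$, and checking that they assemble into a total divergence via the Kähler identities. The required manipulations are of the same flavour as (but simpler than) those behind Proposition~\ref{Prop_KH0ExpS}, and the identity is classical, appearing in various forms in Donaldson~\cite{Donaldson1985,Donaldson1986} and Simpson~\cite{Simpson1988}, so for the present article it is essentially a citation.
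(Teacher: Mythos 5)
The paper itself gives no proof of this proposition --- it is imported from \cite[Proposition~5.1]{Simpson1988}, and the surrounding text in \autoref{Sec_DonaldsonFunctional} makes clear it merely ``marshals'' these facts. Your sketch reconstructs the standard argument behind that citation, which is indeed how Donaldson and Simpson prove it: rewrite $\sM$ as the path integral of the $1$--form $\mu_H(\xi)=\int_X\tr(\xi K_H)\,\vol$ on the (contractible, in fact convex) space of Hermitian metrics with fixed determinant, verify that $\mu$ is closed, and conclude by concatenation. So the route is sound and matches the cited source. One slip worth correcting in the middle step: with $\eta=H^{-1}\partial_r H$ the variation of the curvature is \emph{exactly} $\partial_r F_H = \delbar\del_H\eta$, with no commutator corrections from $[\delbar,\eta]$ or $[\del_H,\eta]$; the commutator terms that actually appear in the closedness computation come from the cross-derivative identity $\partial_r\xi-\partial_t\eta=[\xi,\eta]$ between the two logarithmic derivatives, together with $\delbar\del_H+\del_H\delbar=[F_H,\,\cdot\,]$ when one integrates by parts on compact $X$. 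These combine to give $\int_X\tr\bigl(\xi\,i\Lambda\delbar\del_H\eta\bigr)-\tr\bigl(\eta\,i\Lambda\delbar\del_H\xi\bigr)=-\int_X\tr\bigl([\xi,\eta]K_H\bigr)$, which is precisely the vanishing of $d\mu$; integrating in $t$ and using that $\eta=0$ at the fixed endpoints then yields path-independence, as you say.
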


\begin{prop}
  \label{Prop_MUpperBound}
  We have
  $\sM(H_0,H_0e^s) \lesssim \int_X \abs{s}\abs{K_{H_0e^s}}$.
\end{prop}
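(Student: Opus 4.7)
The approach is to exploit convexity of the Donaldson functional along the path $u \mapsto H_0 e^{us}$ of Hermitian metrics. Setting $\psi(u) := \sM(H_0, H_0 e^{us})$, the bound will follow from the chain
\begin{equation*}
  \sM(H_0, H_0 e^s) = \psi(1) \leq \psi'(1) = \int_X \tr(s K_{H_0 e^s}) \leq \int_X \abs{s}\abs{K_{H_0 e^s}},
\end{equation*}
where the first inequality is convexity of $\psi$ together with $\psi(0) = 0$, and the last is Cauchy--Schwarz together with the identity $\abs{\Ad(e^{s/2})K_{H_0 e^s}}_{H_0} = \abs{K_{H_0 e^s}}_{H_0 e^s}$ (which itself follows from cyclicity of the trace and the fact that $s$ commutes with $e^{s/2}$).

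First I would compute $\psi'$. Since $s$ commutes with itself, $H_0 e^{(u+\eta)s} = (H_0 e^{us})\, e^{\eta s}$, so \autoref{Prop_MAdditivity} gives $\psi(u+\eta) - \psi(u) = \sM(H_u, H_u e^{\eta s})$ with $H_u := H_0 e^{us}$. Expanding the definition of $\sM$ to first order in $\eta$ and using that $s$ commutes with $e^{us/2}$ (so the $\Ad$-conjugation drops out under the trace) yields
\begin{equation*}
  \psi'(u) = \int_X \inner{s}{\Ad(e^{us/2}) K_{H_u}} = \int_X \tr(sK_{H_u}).
\end{equation*}

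The main step is establishing convexity of $\psi$. I would apply \autoref{Prop_LaplacianIdentity} with base metric $H_u$ and logarithmic displacement $\eta s$, and integrate over the compact manifold $X$. The $\Delta\abs{s}^2$ term vanishes by Stokes' theorem, leaving
\begin{equation*}
  \int_X \inner{\Ad(e^{\eta s/2}) K_{H_{u+\eta}} - K_{H_u}}{\eta s} = \frac{\eta^2}{2}\int_X \abs{\upsilon(-\eta s) \nabla_{H_u} s}^2.
\end{equation*}
By the previous step, the left-hand side equals $\eta\bigl(\psi'(u+\eta) - \psi'(u)\bigr)$. Dividing by $\eta^2$ and sending $\eta \to 0$ (recalling $\upsilon(0) = 1$) yields $\psi''(u) = \tfrac12 \int_X \abs{\nabla_{H_u} s}^2 \geq 0$. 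Convexity of $\psi$ on $[0,1]$ then gives $\psi(1) = \int_0^1 \psi'(u)\,\rd u \leq \psi'(1)$, and combining with the Cauchy--Schwarz step completes the argument.

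The hard part will be the bookkeeping in the convexity step: identifying the integrated left-hand side of \autoref{Prop_LaplacianIdentity} as precisely the difference quotient $\eta\bigl(\psi'(u+\eta) - \psi'(u)\bigr)$ requires several trace-cyclicity and $\Ad$-commutation manipulations (plus the verification that both $\Ad(e^{\eta s/2})K_{H_{u+\eta}}$ and $K_{H_u}$ are $H_u$-Hermitian, so the inner product is the trace pairing). Once those algebraic identifications are in place, the remaining estimates are immediate.
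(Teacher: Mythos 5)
Your proof is correct and takes essentially the same route as the paper: both use $m(0)=0$, convexity of $m(u):=\sM(H_0,H_0e^{us})$ to deduce $m(1)\leq m'(1)$, and then Cauchy--Schwarz to bound $m'(1)$ by $\int_X\abs{s}\abs{K_{H_0e^s}}$. The only difference is that the paper simply cites \cite[Proof of Lemma 24]{Donaldson1986} for the convexity, whereas you rederive it from \autoref{Prop_LaplacianIdentity} — a self-contained and welcome, but not structurally different, addition.
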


\begin{proof}
  This holds because $m(u) := \sM(H_0,H_0e^{us})$ is convex \cite[Proof of Lemma 24]{Donaldson1986}, $m(0) = 0$ and $m'(1) \lesssim \int_X \abs{s}\abs{K_{H_0e^s}}$.
\end{proof}

\begin{theorem}[{\citet[Lemma 24]{Donaldson1986}; see also \cite[Proposition 5.3]{Simpson1988}}]
  \label{Thm_MControlsS}
  If $H_0$ is $\P$HYM, then
  \begin{equation*}
    \Abs{s}_{L^2}-1 \lesssim \sM(H_0,H_0e^s). 
  \end{equation*}
\end{theorem}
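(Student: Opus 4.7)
My plan is to follow the classical contradiction argument of Donaldson and Simpson: if the estimate fails, extract a weak limit of normalised failures whose spectral projections define a $\mu$--destabilising coherent subsheaf, contradicting the polystability implicit in the existence of a $\P$HYM metric.

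\emph{Setup and convexity.} Since $H_0$ is $\P$HYM, the bundle $\sE$ is $\mu$--polystable; using the additivity of $\sM$ (\autoref{Prop_MAdditivity}) one reduces to the case when $\sE$ is $\mu$--stable. Let $m(u) := \sM(H_0, H_0 e^{us})$. Because $s$ is trace-free and $K_{H_0} = 0$, one has $m(0) = 0$ and $m'(0) = \int_X \langle s, K_{H_0}\rangle = 0$. A computation based on \autoref{Prop_DerivativeKH0ExpS} (plus an integration by parts) yields a second variation
\[
  m''(u) = 2\int_X \abs{\Psi_u(s)\, \delbar s}^2 \geq 0
\]
for a positive self-adjoint operator $\Psi_u(s)$ that is bounded below uniformly whenever $\abs{s}$ stays bounded. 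In particular $m$ is convex, so $\sM(H_0, H_0 e^s) = m(1) \geq \int_0^1 (1-u)m''(u)\,du$.

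\emph{Rescaling and weak limit.} Suppose the inequality fails. Then there is a sequence $s_i$ with $L_i := \Abs{s_i}_{L^2} \to \infty$ but $\sM(H_0, H_0 e^{s_i}) = o(L_i)$. If $\Abs{s_i}_{L^\infty}$ stayed bounded, the convexity bound would give $L_i^2 \lesssim \sM(H_0, H_0 e^{s_i}) = o(L_i)$, a contradiction; so we may assume $\ell_i := \Abs{s_i}_{L^\infty} \to \infty$ and set $u_i := s_i/\ell_i$, giving $\abs{u_i} \leq 1$ pointwise. Rewriting the second-variation bound for $s_i$ in terms of $u_i$ yields uniform integral control of $\delbar u_i$ weighted by a smoothing kernel $\Phi(\ell_i u_i)$ which is bounded below away from the diagonal of the eigenvalue spectrum. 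A weak compactness argument then produces $u_\infty \in L^\infty \cap L^2_1$; the normalisation by the sup norm rather than $L^2$ ensures $u_\infty \not\equiv 0$.

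\emph{Destabilising subsheaf and conclusion.} The weighted bound passes to the limit and forces the following key property: for almost every $\lambda$ in the range of the eigenvalues of $u_\infty$, the spectral projection $\pi_\lambda := \chi_{(-\infty,\lambda]}(u_\infty)$ satisfies $\pi_\lambda^2 = \pi_\lambda$ and $(\id - \pi_\lambda)\,\delbar \pi_\lambda = 0$ in $L^2$. By the Uhlenbeck--Yau regularity theorem for weakly holomorphic subbundles, each $\pi_\lambda$ arises from a coherent reflexive subsheaf $\sF_\lambda \subset \sE$. A Chern--Weil identity, integrated against the eigenvalue measure of $u_\infty$ and using $K_{H_0} = 0$, gives $\int_\R (\mu(\sF_\lambda) - \mu(\sE))\,d\lambda \gtrsim 1$, contradicting $\mu$--stability. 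The main obstacle is the Uhlenbeck--Yau regularity step: extracting honest coherent subsheaves from the weakly holomorphic spectral projections of $u_\infty$ requires a delicate integration-by-parts estimate to boost regularity, together with a Hironaka-type resolution of the potential singular locus.
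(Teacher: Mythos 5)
The paper does not prove \autoref{Thm_MControlsS}; it cites it directly from \citet[Lemma 24]{Donaldson1986} and \cite[Proposition 5.3]{Simpson1988}. Your proposal is a recognisable outline of precisely that classical argument (convexity of the Donaldson functional, contradiction and rescaling, weak compactness, spectral projections, Uhlenbeck--Yau regularity of weakly holomorphic subbundles, and a Chern--Weil slope estimate), so in that sense you have reproduced the ``same approach'' as the one the paper is invoking.

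There is, however, one genuine flaw in your setup. You assert that, since $H_0$ is $\P$HYM, $\sE$ is $\mu$--polystable, and that additivity of $\sM$ (\autoref{Prop_MAdditivity}) lets you reduce to the $\mu$--stable case. That reduction does not work, and in fact the stated estimate is \emph{false} for a polystable bundle that is not stable: if $\sE = \sE_1 \oplus \sE_2$ is a direct sum of two stable bundles of the same slope, $H_0 = H_1 \oplus H_2$ is a sum of $\P$HYM metrics, and
\begin{equation*}
  s = a\,\id_{\sE_1} \oplus (-b)\,\id_{\sE_2}
  \qquad\text{with}\quad r_1 a = r_2 b,
\end{equation*}
then $s$ is trace-free, $H_0 e^s$ differs from $H_0$ only by constant scalings on each factor, so $K_{H_0 e^s} = K_{H_0} = 0$ and hence $\sM(H_0, H_0 e^s) = 0$, while $\Abs{s}_{L^2}$ can be made arbitrarily large. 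This also shows that the one-sentence uniform-lower-bound claim you make for the second-variation density fails in the polystable setting (your own argument implicitly uses simplicity of $\sE$ when it passes from $\Abs{\delbar s}_{L^2}$ to $\Abs{s}_{L^2}$). The correct hypothesis, as in Simpson's Proposition 5.3, is that $\sE$ is $\mu$--stable and $H_0$ is its $\P$HYM metric; the paper's phrasing is a mild abbreviation, and its actual application (to $\sE|_{D_z}$, which is $\mu$--stable) is consistent with that. You should simply assume stability from the start rather than attempting a reduction from polystability, since no such reduction is available.

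Two smaller remarks: the functional you want is $m(u) := \sM(H_0, H_0 e^{us})$ rewritten via additivity as $\int_0^u \int_X \langle s, \Ad(e^{ws/2}) K_{H_0 e^{ws}}\rangle\,\rd w$, so that $m'(u) = \int_X \langle s, \fK(us)\rangle$ and $m''(u)$ is computed from \autoref{Prop_DerivativeKH0ExpS} after one integration by parts; your expression for $m''$ is of the right shape but should be the quadratic form $\int_X \langle \Psi_u(s)\,\delbar s, \delbar s\rangle$ rather than a norm-squared of $\Psi_u(s)\,\delbar s$, though both make the convexity transparent. And since you are ultimately only after the $L^2$ bound in the statement, you do not need the full strength of the sup-norm version in Donaldson/Simpson, which slightly simplifies the weak-limit step; but the structure of the argument is the same.
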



\section{Bando--Siu interior estimate}
\label{Sec_BSInteriorEstimate}

\begin{theorem}[{\citet[Proposition 1]{Bando1994}}]
  \label{Thm_BSInteriorEstimate}
  Let $(X,g,I)$ be a Kähler manifold of dimension $n$ with bounded geometry and let $\sE$ be a holomorphic vector bundle over $X$.
  If $H_0$ and $H$ are Hermitian metrics on $\sE$ and $s := \log(H_0^{-1}H) \in C^\infty(X,i\su(\sE,H_0))$,
  then
  \begin{align*}
    &r^{k+2-\frac{2n}{p}}\Abs{\nabla_{H_0}^{k+2} s}_{L^p(B_r(x))} \\
    &\quad\quad 
      \leq \epsilon_{k,p}\Big(
      \Abs{s}_{L^\infty(B_{2r}(x))} + \Abs{K_{H}}_{L^\infty(B_{2r}(x))} + r^{k-\frac{2n}{p}}\Abs{\nabla^k K_{H}}_{L^{p}(B_{2r}(x))} \\
    &\quad\quad\quad\quad\quad
      + \sum_{j=0}^k r^{2+i}\Abs{\nabla_{H_0}^i F_{H_0}}_{L^\infty(B_{2r}(x))}
      \Big)
  \end{align*}
  where $\epsilon_{k,p}$ is a smooth function which vanishes at the origin and depends only on $k \in \N$, $p \in (1,\infty)$, and the geometry of $X$.
\end{theorem}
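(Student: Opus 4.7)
The plan is to extract a semilinear elliptic PDE for $s$ from \autoref{Prop_KH0ExpS} and then apply standard interior $L^p$ estimates and a bootstrap, tracking carefully how nonlinear terms enter. First I would reduce to the case $r=1$ by the rescaling $g \leadsto r^{-2}g$; under this the stated weights are precisely those that make each quantity in the estimate scale-invariant, so it suffices to treat one fixed geodesic ball on a Kähler manifold with uniformly controlled geometry.

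Next, \autoref{Prop_KH0ExpS} gives an identity of the schematic form
\begin{equation*}
  \tfrac12\,\Theta(s)\,\nabla_{H_0}^*\nabla_{H_0} s
  \;=\;
  \fK(s) - (2\cosh(\ad_{s/2})-1)K_{H_0} + Q(s)(\nabla_{H_0} s,\nabla_{H_0}s),
\end{equation*}
where $\fK(s) = \Ad(e^{s/2})K_H$ satisfies $|\fK(s)| \lesssim e^{|s|}|K_H|$ and $Q(s)$ is an entire-analytic, endomorphism-valued coefficient, depending only on $s$, that is bilinear in $\nabla_{H_0} s$. Since $\Theta(s)$ is self-adjoint with smallest eigenvalue $\geq 1$, its inverse is a smooth function of $s$. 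Rearranging, one obtains a quasilinear elliptic equation
\begin{equation*}
  \nabla_{H_0}^*\nabla_{H_0} s
  \;=\;
  P_1(s)\,\fK(s) + P_2(s)\,K_{H_0} + P_3(s)(\nabla_{H_0}s,\nabla_{H_0}s),
\end{equation*}
with $P_j$ smooth in $s$ and $P_2(0) = 0$.

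For the base case $k=0$, I would apply the interior $L^p$ estimate for the rough Laplacian on $B_1(x) \subset B_2(x)$, combined with Gagliardo--Nirenberg to absorb the quadratic gradient term:
\begin{equation*}
  \Abs{\nabla_{H_0}s}_{L^{2p}(B_{3/2})}^2
  \lesssim \Abs{s}_{L^\infty(B_2)}\bigl(\Abs{\nabla_{H_0}^2 s}_{L^p(B_2)} + \Abs{s}_{L^p(B_2)}\bigr).
\end{equation*}
This yields the desired bound on $\Abs{\nabla_{H_0}^2 s}_{L^p(B_1)}$ in terms of $\Abs{s}_{L^\infty}$, $\Abs{K_H}_{L^p}$, and $\Abs{F_{H_0}}_{L^\infty}$; the dependence assembles into a smooth function of those quantities that vanishes at the origin because $P_2(0)=0$ and the quadratic-gradient term contributes $O(\Abs{s}_{L^\infty}\Abs{\nabla_{H_0}^2 s}_{L^p})$ which is absorbed when $\Abs{s}_{L^\infty}$ is small. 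For $k \geq 1$, differentiate the equation $k$ times; commutators with $\nabla_{H_0}$ introduce factors of $F_{H_0},\ldots,\nabla_{H_0}^{k-1}F_{H_0}$, while the nonlinear terms produce polynomial expressions in lower-order jets of $s$. Iterating the interior $L^p$ estimate and handling the multilinear pieces via Sobolev embedding and standard product-rule interpolation gives the general $k$.

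The main obstacle is the bookkeeping needed to show that after $k$-fold bootstrapping, the estimate still factors through a single smooth function $\epsilon_{k,p}$ that vanishes at the origin. This requires at each step (i) ensuring the quadratic gradient terms are absorbed without spending more than $\Abs{s}_{L^\infty}$ worth of smallness, (ii) controlling the analytic coefficients $\Upsilon,\Theta,\cosh(\ad_{s/2})$ and their derivatives by polynomials in $\Abs{s}_{L^\infty}$, and (iii) writing the curvature of $H_0$ only through the quantities $\Abs{\nabla_{H_0}^i F_{H_0}}_{L^\infty}$ that appear on the right-hand side. Once these are organised, the composition of the finitely many smooth maps produced by the bootstrap gives the required $\epsilon_{k,p}$.
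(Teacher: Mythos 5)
Your plan is the ``standard bootstrapping'' route, and the paper explicitly warns that this is insufficient here; the gap is real. The quadratic gradient term $C(\nabla s \otimes \nabla s)$ in the equation for $s$ is scaling-critical: your Gagliardo--Nirenberg step
\begin{equation*}
  \Abs{\nabla_{H_0}s}_{L^{2p}}^2 \lesssim \Abs{s}_{L^\infty}\bigl(\Abs{\nabla_{H_0}^2 s}_{L^p} + \cdots\bigr)
\end{equation*}
produces on the right a term of the \emph{same} order as the quantity you are trying to estimate, not of lower order, and the only way to close the estimate is to absorb it. You yourself note this works ``when $\Abs{s}_{L^\infty}$ is small,'' but that smallness is exactly what is not available. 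In the paper's intended application (the proof of \autoref{Prop_APrioriCkEstimate}), one first proves a $C^0$ bound $\Abs{s}_{L^\infty} \leq c$ with $c$ a fixed constant depending on $H_0$ and the geometry, with no reason to be small, and then invokes \autoref{Thm_BSInteriorEstimate} to upgrade $C^0$ to $C^k$. So $\epsilon_{k,p}$ must be finite on the full range $\Abs{s}_{L^\infty}\leq c$, where your absorption cannot be closed. (Worse, the factor multiplying $\Abs{\nabla_{H_0}^2 s}_{L^p}$ is not really $\Abs{s}_{L^\infty}$: the coefficients $A$ and $C$ in the schematic PDE involve $\Upsilon$, $\Theta$, which grow roughly like $e^{\abs{s}}$.)

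The missing ingredient is a $C^{1,\beta}$ bound on $s$ obtained from the $L^\infty$ bound alone, valid for \emph{all} values of $\Abs{s}_{L^\infty}$, so that $\abs{\nabla s}^2$ becomes a genuinely lower-order term. Once $\nabla s\in L^\infty$ is available, your $k\geq 1$ bootstrap (differentiate $k$ times, track commutator terms through $\nabla_{H_0}^i F_{H_0}$) is essentially what the paper does and is fine. The paper reaches $C^{1,\beta}$ by a Morrey--Campanato hole-filling argument: using the differential inequality $\Delta\abs{s}^2 \leq \text{(controlled)} - c\abs{\nabla s}^2$ and the weight $G_x(\cdot)=\abs{\cdot-x}^{2-2n}$, one shows the weighted Dirichlet integral $f_x(r):=\int_{B_r(x)}G_x\abs{\nabla s}^2$ satisfies an iteration inequality $f_x(r)\leq \gamma f_x(2r)+\epsilon r^2$ with $\gamma<1$. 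Here $\gamma$ degrades towards $1$ as $\Abs{s}_{L^\infty}$ grows but always stays below $1$, giving a Morrey bound $\Abs{\nabla s}_{L^{2,2n-2+2\alpha}}\leq\epsilon$ for some $\alpha=\alpha(\Abs{s}_{L^\infty})>0$. Hildebrandt's estimate (\autoref{Prop_HildebrandtAPrioriEstimate}) then converts this Morrey bound into the needed $C^{1,\beta}$ bound. That two-step Morrey $\Rightarrow$ H\"older mechanism, not smallness of $s$, is what makes the theorem hold at the stated generality.
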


It suffices to prove this in the case where $H_0$ is a flat metric on a trivial holomorphic bundle over $\bar B_2 \subset \C^n$.
The theorem is not a straight-forward consequence of standard bootstrapping techniques because we only have
\begin{equation*}
  \Delta s
  = A(K_H) + C(\nabla s \otimes \nabla s)
\end{equation*}
where $A$ and $C$ are linear with coefficients depending on $s$;
see \autoref{Prop_KH0ExpS}.
The usual Sobolev estimates will not suffice to prove \autoref{Thm_BSInteriorEstimate} without any control of $\nabla s$.
However, if we assume $C^{0,\beta}$ bounds on $\nabla s$ of the above form, then the usual method does give the desired estimates.
It is well known to analysts that for an equation of this form $C^{0,\beta}$ bounds on $\nabla s$ can be obtained provided a bound on the Morrey norm $\Abs{\nabla s}_{L^{2,2n-2+2\alpha}}$;
see \autoref{Def_Morrey}.
We give full details for this fact, which is completely general and has nothing to do with Hermitian Yang--Mills metrics, in \autoref{Sec_Hildebrandt}.
All of this being said, it thus suffices to prove the following proposition.

\bigbreak

\begin{prop}
  Denote by $H_0$ a flat Hermitian metric on the trivial holomorphic bundle of rank $r$ over $\bar B_2 \subset \C^n$.
  If $H = H_0e^s$ with $s \in C^\infty(\bar B_2,i\su(r))$, then
  \begin{align*}
    [s]_{C^{0,\alpha}(\bar B_1)}
    &\lesssim
      \Abs{\nabla s}_{L^{2,2n-2+2\alpha}(B_1)}\\
    &\leq
      \epsilon(\Abs{s}_{L^\infty(B_2)}+\Abs{K_H}_{L^\infty(B_2)})
  \end{align*}
  where $\alpha \in (0,1)$ depends on $\Abs{s}_{L^\infty(B_2)}$ in a monotonely decreasing way,
  and $\epsilon$ is a smooth function which vanishes at the origin.
\end{prop}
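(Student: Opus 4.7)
The first inequality is the standard Campanato--Morrey characterization of Hölder continuous functions. For the second, the plan is to derive a Caccioppoli-type estimate from \autoref{Prop_LaplacianIdentity}, promote it to a mean-subtracted version via the elliptic PDE for $s$ implicit in \autoref{Prop_KH0ExpS}, and then improve the Morrey exponent from the critical value $2n-2$ to $2n-2+2\alpha$ by invoking a preliminary Hölder bound on $s$.

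Since $H_0$ is flat we have $K_{H_0} = 0$, and \autoref{Prop_LaplacianIdentity} specializes to
\begin{equation*}
  \tfrac{1}{4}\Delta|s|^2 + \tfrac{1}{2}|\upsilon(-s)\nabla s|^2 = \inner{\Ad(e^{s/2})K_H}{s}.
\end{equation*}
The lower bound $|\upsilon(-s)\xi|^2 \gtrsim (1+c\|s\|_{L^\infty})^{-1}|\xi|^2$ (which follows from $\upsilon(-s)^2 = (1-e^{-\ad_s})/\ad_s$) together with a test against a cutoff $\eta^2$ ($\eta \equiv 1$ on $B_r(x)$, $\supp\eta \subset B_{2r}(x)$, $|\nabla\eta| \lesssim 1/r$) yields, after integration by parts and Cauchy--Schwarz, the basic Caccioppoli inequality
\begin{equation*}
  \int_{B_r(x)}|\nabla s|^2 \lesssim c(\|s\|_{L^\infty})\left(\frac{1}{r^2}\int_{B_{2r}(x)}|s|^2 + \int_{B_{2r}(x)}|s||K_H|\right),
\end{equation*}
corresponding only to the critical Morrey exponent $2n-2$. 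To boost the exponent I re-run the argument with $v := s - s_{B_{2r}(x)}$ in place of $s$: testing the elliptic PDE $\tfrac12\Theta(s)\nabla^*\nabla s = \Ad(e^{s/2})K_H - P(s,\nabla s)$ (read off from \autoref{Prop_KH0ExpS}, with $|P|\lesssim|\nabla s|^2$) against $v\eta^2$ and using $\Theta(s)\succeq I$ together with $|\nabla\Theta(s)|\lesssim|\nabla s|$, one integration by parts produces a mean-subtracted Caccioppoli whose only non-standard feature is a quadratic error $\int\eta^2|v||\nabla s|^2$; this is absorbed into the leading form provided $\|v\|_{L^\infty(B_{2r}(x))}$ is sufficiently small.

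To supply this smallness I first establish a crude Hölder continuity of $s$. The identity exhibits $u := |s|^2$ as a subsolution of a linear elliptic inequality with bounded right-hand side, while the PDE for $s$ itself is a quasilinear system with quadratic gradient nonlinearity and uniformly elliptic leading operator; together this places the system in the Ladyzhenskaya--Uraltseva--Hildebrandt--Widman framework and yields $s \in C^{0,\alpha'}_\loc$ with $\alpha' = \alpha'(\|s\|_{L^\infty}) > 0$ monotonically decreasing in $\|s\|_{L^\infty}$. In particular $\|v\|_{L^\infty(B_{2r}(x))} \lesssim [s]_{C^{0,\alpha'}}\cdot r^{\alpha'}$, the absorption succeeds for all sufficiently small $r$, and the mean-subtracted Caccioppoli delivers
\begin{equation*}
  \int_{B_r(x)}|\nabla s|^2 \lesssim r^{2n-2+2\alpha'}\bigl([s]_{C^{0,\alpha'}} + \|K_H\|_{L^\infty}\bigr)^2.
\end{equation*}
For $r$ bounded away from zero the original Caccioppoli already delivers a bound of the same form with an $r_0$-dependent constant; setting $\alpha := \alpha'$ gives the claim, with $\epsilon$ linear in $\|s\|_{L^\infty}+\|K_H\|_{L^\infty}$ in the small-data regime (where the Hölder seminorm $[s]_{C^{0,\alpha'}}$ also vanishes). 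The hard part will be the crude Hölder continuity step with explicit monotone dependence of $\alpha'$ on $\|s\|_{L^\infty}$: this requires a careful verification that the structural constants of the quasilinear system produced by \autoref{Prop_KH0ExpS} track $\|s\|_{L^\infty}$ as required by the De Giorgi--Moser--Hildebrandt regularity theory.
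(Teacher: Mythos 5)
Your first step (the basic Caccioppoli at critical exponent $2n-2$ via \autoref{Prop_LaplacianIdentity}) is sound and parallels the paper's Step 1. But the crucial exponent boost is handled differently, and your version has a genuine gap.

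You correctly identify the central difficulty: testing the elliptic identity against the mean-subtracted $v\eta^2$ produces a quadratic error $\int\eta^2\abs{v}\abs{\nabla s}^2$ that can only be absorbed if $\Abs{v}_{L^\infty(B_{2r})}$ is already small, which you propose to obtain from a preliminary Hölder bound via Ladyzhenskaya--Ural'tseva--Hildebrandt--Widman theory. This is circular in spirit: the content of the proposition \emph{is} a Hölder/Morrey estimate for exactly this quasilinear system, and the quantitative features you need (monotone dependence of $\alpha'$ on $\Abs{s}_{L^\infty}$, and the $\epsilon$-vanishing of the constant) are not supplied by a black-box invocation of LUHW for systems with quadratic gradient growth, which in its standard form either applies to scalar equations or requires a smallness condition of the type $2a\Abs{u}_{L^\infty}<1$ that need not hold here. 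You flag this as "the hard part," but it is not a verification to be deferred; it is the mathematical substance of the proposition.

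The paper avoids this entirely by replacing the linear mean-subtraction $v=s-\bar s$ with the nonlinear change of variables $\sigma:=\log(e^se^{-\bar s})$, where $\bar s:=\fint_{B_{2r}(x)\setminus B_r(x)}s$. Since $\bar s$ is a constant endomorphism, $H_0e^{\bar s}$ is again a \emph{flat} Hermitian metric, and $\sigma$ is precisely the logarithm of $H$ relative to this new flat background; hence \autoref{Prop_LaplacianIdentity} applies verbatim to $\sigma$ and produces a clean differential inequality $\abs{\nabla\sigma}^2\leq M(1-\Delta\abs{\sigma}^2)$ with \emph{no} quadratic error term to absorb. The elementary pointwise comparisons $\abs{\nabla s}^2\lesssim M\abs{\nabla\sigma}^2$ and $\abs{\sigma}^2\lesssim M\abs{s-\bar s}^2$ (with $M$ depending only on $\Abs{s}_{L^\infty}$) then convert this into a hole-filling inequality for the Green's-function-weighted quantity $f_x(r):=\int_{B_r(x)}\abs{\cdot-x}^{2-2n}\abs{\nabla s}^2$, namely $f(r)\leq\gamma f(2r)+\epsilon r^2$ with $\gamma=\gamma(\Abs{s}_{L^\infty})<1$. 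Iterating this gives $f(r)\leq\epsilon r^{2\alpha}$ directly, with no a priori Hölder input; the dependence of $\alpha$ on $\Abs{s}_{L^\infty}$ through $\gamma$ is precisely the monotone dependence claimed. The $\sigma$-trick is the key idea your proposal is missing; without it, or an equally effective substitute for the absorption step, the argument does not close.
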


\begin{proof}
  For $x \in B_{1}$ define $f_x \co (0,1] \to [0,\infty)$ by
  \begin{equation*}
    f_x(r) := \int_{B_r(x)} G_x \abs{\nabla s}^2
  \end{equation*}
  with $G_x(\cdot) := \abs{\cdot-x}^{2-2n}$.
  We will show that
  \begin{equation*}
    f_x(r) \leq \epsilon r^{2\alpha}
  \end{equation*}
  with $\epsilon$ and $\alpha$ as in the proposition.
  This implies the asserted Morrey bound.

  In the following we fix $x \in B_1$ and $r \in (0,1/2]$ and omit writing the subscript $x$ to simplify notation.

  \setcounter{step}{0}
  \begin{step}
    We have $f(r) \leq \epsilon$.
  \end{step}

  Fix a smooth function $\chi \co [0,\infty) \to [0,1]$ which is equal to one on $[0,1]$ and vanishes outside $[0,2]$.
  Set $\chi_{r}(\cdot) := \chi(\abs{\cdot - x}/r)$.
  Using
  \begin{equation*}
    \abs{\nabla s}^2
    \lesssim
      \epsilon\cdot(1-\Delta\abs{s}^2),
  \end{equation*}
  which follows from \eqref{Eq_NablaS}, we compute
  \begin{align*}
    f(r) 
    &\leq
      \int_{B_{2r}(x)} \chi_{r} G \cdot \abs{\nabla s}^2 \\
    &\lesssim
      \epsilon\int_{B_{2r}(x)} \chi_{r} G \cdot (-\Delta\abs{s}^2)
      + \chi_{r} G \\
    &\lesssim
      \epsilon r^{-n} \int_{B_{2r}(x)\setminus B_{r}(x)} \abs{s}^2
      + \epsilon r^2 \\
    &\leq \epsilon.
  \end{align*}
  Here we used the convention of ``generic constants''; that is, $\epsilon$ is allowed to increase from one line to the next.

  \begin{step}
    We have $f(r) \leq \gamma f(2r) + \epsilon r^2$ for some constant $\gamma \in (0,1)$ depending on $\|s\|_{L^\infty(B_2)}$.
  \end{step}
  
  Set
  \begin{equation*}
    \bar s := \fint_{B_{2r}(x)\setminus B_r(x)} s \in i\su(r) \qandq \sigma := \log(e^{s}e^{-\bar s}).
  \end{equation*}
  Observe that
  \begin{equation*}
    \abs{\nabla s}^2
    \lesssim
    M\abs{\nabla\sigma}^2
    \qandq
    |\sigma|^2
    \lesssim
      M\abs{s-\bar s}^2
  \end{equation*}
  with $M>0$ some constant depending on $\Abs{s}_{L^\infty(B_2)}$ and $\Abs{K_H}_{L^\infty(B_2)}$ in a monotonely increasing way.
  Arguing as in the previous step we have
  \begin{equation*}
    \abs{\nabla \sigma}^2 
    \leq
      M\(-4\inner{K_{H}}{\sigma} - \Delta\abs{\sigma}^2\)
    \leq
      M(1 - \Delta\abs{\sigma}^2).
  \end{equation*}
  Using the above and Poincaré's inequality we have
  \begin{align*}
    \int_{B_r(x)} G\abs{\nabla s}^2
    &\lesssim
      M\int_{B_{2r}(x)} \chi_r G\cdot (-\Delta\abs{\sigma}^2)
      + \epsilon \chi_r G \\
    &\lesssim
      M\cdot r^{-2n} \int_{B_{2r}(x)\setminus B_r(x)} \abs{\sigma}^2
      + \epsilon r^2 \\
    &\lesssim
      M^2\cdot r^{-2n} \int_{B_{2r}(x)\setminus B_r(x)} \abs{s-\bar s}^2
      + \epsilon r^2 \\
    &\lesssim
      M^2 \cdot r^{2-2n} \int_{B_{2r}(x)\setminus B_r(x)} \abs{\nabla s}^2
      + \epsilon r^2 \\
    &\lesssim
      M^2\int_{B_{2r}(x)\setminus B_r(x)} G\abs{\nabla s}^2
      + \epsilon r^2.
  \end{align*}
  This gives the asserted inequality.

  \begin{step}
    We have $f(r) \leq \epsilon r^{2\alpha}$.
  \end{step}

  We can assume that $\gamma \geq 1/2$.
  Set $g(r) := f(r) + \frac{c}{4\gamma-1}\epsilon r^2$.
  By the second step
  \begin{equation*}
    g(r)
    \leq \gamma^k g(2^kr).
  \end{equation*}
  Setting $k := \log_2 \ceil{1/2r}$, we have $\gamma^k \lesssim r^{2\alpha}$ for some $\alpha \in (0,1)$ depending only on $\gamma$;
  hence, by the first step
  \begin{equation*}
    f(r) \leq \epsilon r^{2\alpha}.
    \qedhere
  \end{equation*}
\end{proof}


\section{Hildebrandt's \texorpdfstring{$C^{1,\beta}$}{C1beta} estimate}
\label{Sec_Hildebrandt}

The following result is well-known to analysts.
It can be traced back to Hildebrandt's work on harmonic maps \cite[Section 6]{Hildebrandt1985}.

\begin{prop}
  \label{Prop_HildebrandtAPrioriEstimate}
  Suppose $\alpha \in (0,1)$.
  Let $U$ be an open subset of $\R^n$ with smooth boundary and let $f \co \bar U \to \R^k$ be a solution of a partial differential equation of the form 
  \begin{equation}
    \label{Eq_PDE}
    \Delta f = A + B(\nabla f) + C(\nabla f \otimes \nabla f)
  \end{equation}
  where $A \in C^0(\bar U,\R^k)$,
  $B \in C^0(\bar U,\End(\R^k))$,
  and $C \in C^0(\bar U,\Hom(\R^k\otimes \R^k,\R^k))$.
  For each $V \subset\subset U$, we have
  \begin{equation*}
    \Abs{\nabla f}_{C^{0,\beta}(V)} \leq \epsilon\(\Abs{\nabla f}_{L^{n-2+2\alpha,2}(U)}\)
  \end{equation*}
  where $\epsilon$ is a smooth increasing function vanishing at the origin (depending on $A$, $B$, $C$, $U$ and $V$), and $\beta \in (0,1)$ depends only on $\alpha$.
\end{prop}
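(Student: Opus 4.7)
My plan is to run the standard Morrey--Campanato iteration for second-order elliptic systems with quadratic gradient nonlinearities, following Hildebrandt and Widman. The key observation is that the hypothesis $\nabla f \in L^{n-2+2\alpha,2}(U)$ places the quadratic term $|\nabla f|^2$ in a Morrey space of exponent $n-2+2\alpha > n-2$, which is precisely the borderline above which Newtonian potentials of $L^1$--Morrey data gain Hölder regularity, permitting a bootstrap from $C^{0,\alpha}$ of $f$ up to $C^{1,\beta}$.

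First I would establish Hölder continuity of $f$ itself. Poincaré's inequality gives, for every $x_0 \in V$ and $r \leq \mathrm{dist}(x_0,\partial U)/2$,
\begin{equation*}
  \int_{B_r(x_0)} |f - \bar f_{B_r(x_0)}|^2
    \leq c r^2 \int_{B_r(x_0)} |\nabla f|^2
    \leq c r^{n+2\alpha} M^2,
  \qquad
  M := \Abs{\nabla f}_{L^{n-2+2\alpha,2}(U)},
\end{equation*}
so Campanato's characterization of Hölder spaces gives $f \in C^{0,\alpha}_{\loc}(U)$ with $[f]_{C^{0,\alpha}} \leq cM$; in particular $\mathrm{osc}_{B_r(x_0)} f \leq cM r^\alpha$.

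Next I would upgrade to Hölder regularity of $\nabla f$ by harmonic comparison. Fix $x_0 \in V$ and small $r$; let $h$ solve $\Delta h = 0$ on $B_r(x_0)$ with $h = f$ on $\partial B_r(x_0)$, and set $w := f-h$. Then $\Delta w = g := A + B(\nabla f) + C(\nabla f\otimes\nabla f)$ with $w = 0$ on $\partial B_r(x_0)$, so the maximum principle gives $\Abs{w}_{L^\infty(B_r)} \leq cMr^\alpha$. The $C^0$ bounds on $A$, $B$, $C$ combined with $\Abs{\nabla f}_{L^2(B_r)}^2 \leq cM^2 r^{n-2+2\alpha}$ and Cauchy--Schwarz yield $\Abs{g}_{L^1(B_r)} \leq c(1+M+M^2) r^{n-2+2\alpha}$ for $r \leq 1$, so integration by parts gives
\begin{equation*}
  \int_{B_r(x_0)} |\nabla w|^2
    = -\int_{B_r(x_0)} wg
    \leq cM(1+M+M^2)\, r^{n-2+3\alpha}.
\end{equation*}
Since $\nabla h$ is componentwise harmonic, it enjoys the decay $\int_{B_\rho(x_0)} |\nabla h - (\nabla h)_{B_\rho}|^2 \leq c(\rho/r)^{n+2}\int_{B_r(x_0)} |\nabla h - (\nabla h)_{B_r}|^2$ for $\rho \leq r$, and combining via the triangle inequality on $\phi(x_0,\rho) := \int_{B_\rho(x_0)} |\nabla f - (\nabla f)_{B_\rho(x_0)}|^2$ yields
\begin{equation*}
  \phi(x_0,\rho) \leq c(\rho/r)^{n+2}\phi(x_0,r) + cM(1+M+M^2)\, r^{n-2+3\alpha}.
\end{equation*}

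The standard Campanato iteration lemma applied to this inequality gives $\phi(x_0,\rho) \leq cM(1+M+M^2)\,\rho^{n-2+3\alpha}$. If $\alpha > 2/3$, then $n-2+3\alpha > n$ and Campanato's embedding directly gives $\nabla f \in C^{0,(3\alpha-2)/2}(V)$ with seminorm bounded by $cM(1+M+M^2)^{1/2}$, which vanishes at $M = 0$ and is smooth in $M$. If $\alpha \leq 2/3$, the bound $\phi(x_0,\rho) \lesssim \rho^{n-2+3\alpha}$ is equivalent (via the Campanato--Morrey equivalence on $(0,n)$) to an improved Morrey bound $\nabla f \in L^{n-2+3\alpha,2}_{\loc}$, and repeating the entire argument with $\alpha$ replaced by $3\alpha/2$ yields a geometric improvement of the Morrey exponent; after $K = O(\log(1/\alpha))$ iterations the exponent exceeds $n$ and the first case applies, producing $\nabla f \in C^{0,\beta}(V)$ for some $\beta \in (0,1)$ depending only on $\alpha$. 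The principal technical obstacle is tracking the quantitative dependence across this bootstrap to confirm that the final bound takes the claimed form $\epsilon(M)$ with $\epsilon$ smooth, monotone, and vanishing at the origin --- this follows because every estimate in the chain depends polynomially on $M$ with no constant term (the $M$ prefactor in the bound on $\Abs{w}_{L^\infty(B_r)}$ is decisive), so the composition across finitely many iterations remains a smooth function vanishing at zero.
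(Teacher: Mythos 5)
Your proposal is correct and follows essentially the same route as the paper: Campanato/Poincaré to get $f \in C^{0,\alpha}$, harmonic replacement $f = h + w$ on balls, the maximum principle plus integration by parts to show $\int_{B_r} |\nabla w|^2 \lesssim r^{n-2+3\alpha}$, the Campanato decay estimate for $\nabla h$, the iteration lemma, and a finite bootstrap replacing $\alpha$ by $3\alpha/2$ until the exponent crosses $n$. The only differences are cosmetic (you track the polynomial dependence on $M$ explicitly rather than via the paper's generic $\epsilon(\cdot)$ convention, and you use oscillation rather than the paper's normalization $f(x_0) = 0$ to bound $\|w\|_{L^\infty}$).
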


We will make heavy use of Morrey and Campanato spaces.
For the reader's convenience all necessary definitions and results are summarised in \autoref{Sec_MorreyCampanato}.

\begin{proof}
  Set $R := d(V,\del U)$.
  Define $\phi \co [0,R] \to [0,\infty)$ by
  \begin{equation*}
    \phi(r) := \sup \set*{ \int_{B_r(x)} \abs{\nabla f - \overline{\nabla f}_{x,r}}^2 : x \in V }.
  \end{equation*}
  By definition
  \begin{equation*}
    [\nabla f]_{\cL^{2,\lambda}(V)}
    \leq
    \sup \set*{ r^{-\lambda}\phi(r) : r > 0 }
    \leq
    [\nabla f]_{\cL^{2,\lambda}(U)}.
  \end{equation*}

  We will show that
  \begin{equation*}
    \phi(r) \leq \epsilon r^{n+2\beta}
  \end{equation*}
  with $\epsilon$ as in the proposition.
  The assertion then follows from \autoref{Thm_MorreyEmbedding}.

  Trivially, we have
  \begin{equation*}
    \phi(r) \leq \epsilon r^{n-2+2\alpha}.
  \end{equation*}
  The following proposition strengthens this estimate using \eqref{Eq_PDE}.

  \begin{prop}
    \label{Prop_PhiGrowth}
    For $0 < s \leq r \leq R$ and if $\alpha \leq 1$, we have
    \begin{equation*}
      \phi(s) \leq c\(\frac{s}{r}\)^{n+2}\phi(r) + \epsilon r^{n-2+3\alpha}.
    \end{equation*}
  \end{prop}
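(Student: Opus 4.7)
The plan is to apply a standard Campanato decomposition argument. Write $B_\rho := B_\rho(x)$ throughout. On the ball $B_r$, split $f = h + w$, where $h \in C^\infty(B_r)\cap C^0(\overline{B_r})$ is the harmonic extension of $f|_{\partial B_r}$ and $w := f - h \in W^{1,2}_0(B_r)$ satisfies
\begin{equation*}
  \Delta w = A + B(\nabla f) + C(\nabla f\otimes\nabla f) \quad \text{on } B_r, \quad w|_{\partial B_r} = 0.
\end{equation*}

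Since each component of $\nabla h$ is harmonic, the classical Campanato decay for harmonic functions gives
\begin{equation*}
  \int_{B_s} \abs{\nabla h - \overline{\nabla h}_{x,s}}^2 \leq c \Bigl(\frac{s}{r}\Bigr)^{n+2} \int_{B_r} \abs{\nabla h - \overline{\nabla h}_{x,r}}^2.
\end{equation*}
Combined with the triangle inequality and the identity $\overline{\nabla h}_{x,r} = \overline{\nabla f}_{x,r} - \overline{\nabla w}_{x,r}$, this yields
\begin{equation*}
  \phi(s) \leq c \Bigl(\frac{s}{r}\Bigr)^{n+2} \phi(r) + c \int_{B_r} \abs{\nabla w}^2,
\end{equation*}
reducing the proof to establishing the bound $\int_{B_r} \abs{\nabla w}^2 \leq \epsilon r^{n-2+3\alpha}$.

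Integration by parts (using $w|_{\partial B_r}=0$) gives $\int_{B_r} \abs{\nabla w}^2 = -\int_{B_r} w\cdot\bigl(A+B(\nabla f)+C(\nabla f\otimes\nabla f)\bigr)$, which I would estimate term by term. The contribution of $A$ is handled by Cauchy--Schwarz combined with Poincaré's inequality $\Abs{w}_{L^2(B_r)} \leq cr\Abs{\nabla w}_{L^2(B_r)}$, producing a term of order $r^{n+2}$; the contribution of $B(\nabla f)$ is treated similarly, combined with the Morrey bound $\int_{B_r}\abs{\nabla f}^2 \leq \Abs{\nabla f}_{L^{n-2+2\alpha,2}(U)}^2\, r^{n-2+2\alpha}$, producing a term of order $r^{n+2\alpha}$. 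Since $\alpha\leq 1$ and $r\leq R$ may be assumed to be at most $1$, both of these contributions are dominated by $r^{n-2+3\alpha}$.

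The main obstacle is the quadratic gradient term $\int_{B_r} w\cdot C(\nabla f\otimes\nabla f)$, because the $L^2$ Morrey bound on $\nabla f$ alone does not control $\int_{B_r}\abs{\nabla f}^4$. My plan is to exploit the smallness of $\Abs{\nabla f}_{L^{n-2+2\alpha,2}(U)}$---which is exactly what the function $\epsilon$ in the statement of \autoref{Prop_HildebrandtAPrioriEstimate} encodes---by combining the Sobolev embedding $\Abs{w}_{L^{2n/(n-2)}(B_r)} \leq c\Abs{\nabla w}_{L^2(B_r)}$ with a Stampacchia-type $L^\infty$ bound $\Abs{w}_{L^\infty(B_r)} \lesssim r^{2-n/q}\Abs{\Delta w}_{L^q(B_r)}$ for a suitable $q > n/2$, and interpolating against the Hölder continuity of $f$ (itself a consequence of the Morrey hypothesis via the Campanato--Morrey--Poincaré embedding). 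This should allow one to absorb the $\Abs{\nabla w}_{L^2}$ factor and produce the required smallness coefficient $\epsilon$ in front of $r^{n-2+3\alpha}$.
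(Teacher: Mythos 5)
Your decomposition $f = h + w$, the Campanato decay estimate for the harmonic part $\nabla h$, and the reduction to bounding $\int_{B_r}\abs{\nabla w}^2$ are exactly the paper's steps. The divergence is in how you treat the quadratic term $\int_{B_r} w\cdot C(\nabla f\otimes\nabla f)$, and that is where your argument is not closed.

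Your plan---Sobolev on $w$, a Stampacchia $L^\infty$ bound $\Abs{w}_{L^\infty}\lesssim r^{2-n/q}\Abs{\Delta w}_{L^q}$ with $q>n/2$, then interpolation---runs into an immediate obstruction: $\Delta w$ contains the term $C(\nabla f\otimes\nabla f)$, and the only thing the hypothesis controls is the $L^{2,n-2+2\alpha}$ Morrey norm of $\nabla f$, which gives $\abs{\nabla f}^2\in L^{1,n-2+2\alpha}$ but no $L^q$ integrability with $q>n/2$. So the Stampacchia estimate is not directly available without additional input, and the proposed scheme is circular or at least substantially incomplete.

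The paper closes the argument much more cheaply, and you in fact already have the key ingredient in hand when you mention \enquote{the Hölder continuity of $f$.} By \autoref{Thm_Poincare} and \autoref{Thm_MorreyEmbedding}, the Morrey hypothesis on $\nabla f$ gives $[f]_{C^{0,\alpha}(U)}\leq\epsilon$. After normalizing $f(x)=0$ (permissible, since the PDE and $\phi$ only see $\nabla f$), this yields $\Abs{f}_{L^\infty(B_r(x))}\leq\epsilon r^\alpha$. The maximum principle then bounds $\Abs{h}_{L^\infty(B_r(x))}\leq\epsilon r^\alpha$, so
\begin{equation*}
  \Abs{w}_{L^\infty(B_r(x))}\leq\epsilon r^\alpha,
\end{equation*}
with no elliptic regularity or Sobolev embedding for $w$ needed at all. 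Now the integration by parts estimate becomes a simple $L^\infty\times L^1$ pairing:
\begin{equation*}
  \int_{B_r(x)}\abs{\nabla w}^2
  \lesssim\int_{B_r(x)}\abs{w}\(1+\abs{\nabla f}^2\)
  \leq\epsilon r^\alpha\(r^n+r^{n-2+2\alpha}\)
  \lesssim\epsilon r^{n-2+3\alpha},
\end{equation*}
where the last step uses $\alpha\leq1$ and $r\leq 1$. This handles all three terms $A$, $B(\nabla f)$, $C(\nabla f\otimes\nabla f)$ uniformly; there is no need to treat them separately with Cauchy--Schwarz, Poincaré, Sobolev, or Stampacchia. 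You were overcomplicating the hardest term when a one-line pointwise bound on $w$ does the whole job.
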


  We will postpone the proof for a short while to explain how the proof of \autoref{Prop_HildebrandtAPrioriEstimate} is completed.  
  To improve the exponent we use the following lemma, whose proof is very simple and deferred to the end of this section.

  \begin{lemma}
    \label{Lem_BootstrapExponent}
    If $\phi \co [0,R] \to [0,\infty)$ is a non-decreasing function and $c,\epsilon>0$, $\alpha > \beta > 0$ are constants such that for all $0 < s\leq r \leq  R$
    \begin{equation*}
      \phi(s) \leq c\(\frac{s}{r}\)^\alpha\phi(r) + \epsilon r^\beta,
    \end{equation*}
    then we have
    \begin{equation*}
      \phi(r)\lesssim_{c,\alpha,\beta} \(\frac{\phi(R)}{R^\beta} + \epsilon\)r^\beta.
    \end{equation*}    
  \end{lemma}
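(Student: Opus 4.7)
My plan is to prove this by the standard geometric iteration scheme of Campanato/Giaquinta type. The idea is that the hypothesis is a ``decay plus error'' inequality, and by iterating on a geometric sequence of radii the first term contracts (because $\alpha > \beta$) while the error term yields the final scaling $r^\beta$.

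Concretely, I would first choose once and for all a factor $\tau \in (0,1)$ such that $c\tau^{\alpha-\beta} \leq \tfrac12$; since $\alpha > \beta$ this is possible, e.g.\ $\tau := (2c)^{-1/(\alpha-\beta)}$. With this choice the hypothesis applied at $(s,r) = (\tau r, r)$ gives
\begin{equation*}
  \phi(\tau r) \leq \tfrac{1}{2}\,\tau^\beta \phi(r) + \epsilon r^\beta
  \qquad \text{for all } r \in (0,R].
\end{equation*}
Setting $r_k := \tau^k R$ and $a_k := \phi(r_k)/r_k^\beta$, dividing the previous inequality by $r_{k+1}^\beta = \tau^\beta r_k^\beta$ yields the scalar recurrence
\begin{equation*}
  a_{k+1} \leq \tfrac{1}{2}\, a_k + \tau^{-\beta}\epsilon,
\end{equation*}
which by induction (or summing a geometric series) gives $a_k \leq a_0 + 2\tau^{-\beta}\epsilon$ for all $k \geq 0$. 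Equivalently,
\begin{equation*}
  \phi(r_k) \leq \Bigl(\frac{\phi(R)}{R^\beta} + 2\tau^{-\beta}\epsilon\Bigr) r_k^\beta.
\end{equation*}

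Finally, for an arbitrary $r \in (0,R]$ I would pick the unique $k \geq 0$ with $r_{k+1} \leq r \leq r_k$. Monotonicity of $\phi$ and the bound $r_k \leq r/\tau$ give
\begin{equation*}
  \phi(r) \leq \phi(r_k) \leq \Bigl(\frac{\phi(R)}{R^\beta} + 2\tau^{-\beta}\epsilon\Bigr) (r/\tau)^\beta
  = \tau^{-\beta}\Bigl(\frac{\phi(R)}{R^\beta} + 2\tau^{-\beta}\epsilon\Bigr) r^\beta,
\end{equation*}
which is the claimed estimate with implicit constant depending only on $c$, $\alpha$, $\beta$ (through $\tau$). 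There is no real obstacle here: the only choice with any content is balancing $c\tau^\alpha$ against $\tau^\beta$, and the rest is a one-line linear recursion followed by a sandwiching argument.
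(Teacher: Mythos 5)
Your proof is correct and takes essentially the same approach as the paper's: choose a scaling factor $\tau$ so that the decay term contracts, derive the linear recurrence $a_{k+1}\le\gamma a_k+\text{const}\cdot\epsilon$ for $a_k=\phi(\tau^kR)/(\tau^kR)^\beta$, sum the geometric series, and interpolate to arbitrary $r$ using monotonicity. The only cosmetic differences are that you fix $\gamma=\tfrac12$ rather than a generic $\gamma<1$, and you drop the $\gamma^k$ factor immediately, which the paper retains in its displayed bound.
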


  We derive that
  \begin{equation*}
    \Abs{\nabla f}_{\cL^{2,n-2+2\alpha'}(V)} \leq \epsilon
  \end{equation*}
  with $\alpha' = \frac{3}{2}\alpha$.
  If $\alpha' < 1$, then by \autoref{Prop_CampanatoMorrey} we have
  \begin{equation*}
    \Abs{\nabla f}_{L^{n-2+2\alpha',2}(V)}
    \leq \epsilon
  \end{equation*}
  and we can restart the argument with $\alpha'$ instead of $\alpha$ and $V$ instead of $U$.
  Iterating this a finite number of times we will eventually end up in the case $\alpha' > 1$.
  In this case
  \begin{equation*}
    \phi(r) \leq \epsilon r^{n+2\beta}
  \end{equation*}
  with $\beta = \frac{\alpha' - 1}{2}$.
  This completes the proof.
\end{proof}

\begin{proof}[Proof of \autoref{Prop_PhiGrowth}]
  Fix a ball $B_r(x) \subset U$ with centre $x \in V$.
  We may assume that $f(x) = 0$, because in all that follows we can work with $f-f(x)$ instead.
  
  \setcounter{step}{0}
  \begin{step}  
    We can write $f = g + h$ with $g,h \co \bar B_r(x) \to \R^k$ satisfying
    \begin{equation}
      \label{Eq_PDEForG}
      \Delta g = A + B(\nabla f) + C(\nabla f \otimes \nabla f) \qandq
      g|_{\del B_r(x)} = 0
    \end{equation}
    and
    \begin{equation*}
      \Delta h = 0 \qandq
      h|_{\del B_r(x)} = f|_{\del B_r(x)}.
    \end{equation*}
  \end{step}

  \begin{step}
    We have
    \begin{equation*}
      \Abs{g}_{L^\infty(B_r(x))} \leq \epsilon r^\alpha \qandq
      \Abs{h}_{L^\infty(B_r(x))} \leq \epsilon r^\alpha.
    \end{equation*}
  \end{step}  

  By \autoref{Thm_Poincare} and \autoref{Thm_MorreyEmbedding} we have $[f]_{C^{0,\alpha}(U)} \leq \epsilon$.
  From $f(x) = 0$ it follows that $\Abs{f}_{L^\infty(B_r(x))} \leq \epsilon r^\alpha$.
  The maximum principle implies the asserted bound on $h$; the bound on $g$ then follows.

  \begin{step}
    We have
    \begin{equation*}
      \int_{B_r(x)} \abs{\nabla g}^2 \leq \epsilon r^{n-2+3\alpha}.
    \end{equation*}
  \end{step}

  Since $g$ vanishes on $\del B_r(x)$ and using \eqref{Eq_PDEForG},
  \begin{align*}
    \int_{B_r(x)} \abs{\nabla g}^2
    &= \int_{B_r(x)} \inner{\Delta g}{g} \\
    &\lesssim \int_{B_r(x)} \abs{g}(1+\abs{\nabla f}^2) \\
    &\leq \epsilon r^{n-2+3\alpha}.
  \end{align*}

  \begin{step}
    For $s \leq r$, we have
    \begin{equation*}
      \int_{B_s(x)} \abs{\nabla h - \overline{\nabla h}_{x,s}}^2
      \lesssim
      \(\frac{s}{r}\)^{(n+2)}\int_{B_r} \abs{\nabla h - \overline{\nabla h}_{x,r}}^2.
    \end{equation*}
  \end{step}
  
  This is \autoref{Thm_CampanatoEstimate} for $\nabla h$.
  
  \begin{step}
    We prove the proposition.
  \end{step}

  Using the preceding steps, we compute
  \begin{align*}
    \int_{B_s(x)} \abs{\nabla f - \overline{\nabla f}_{x,s}}^2
    &\leq
      \int_{B_s(x)} \abs{\nabla h - \overline{\nabla h}_{x,s} + \nabla g}^2 \\
    &\lesssim
      \int_{B_s(x)} \abs{\nabla h - \overline{\nabla h}_{x,s}}^2
      +
      \int_{B_s(x)} \abs{\nabla g}^2 \\
    &\lesssim
      \(\frac{s}{r}\)^{n+2}\int_{B_r(x)} \abs{\nabla h - \overline{\nabla h}_{x,r}}^2
      +
      \int_{B_r(x)} \abs{\nabla g}^2 \\
    &\lesssim
      \(\frac{s}{r}\)^{n+2}\int_{B_r(x)} \abs{\nabla f - \overline{\nabla f}_{x,r}}^2
      +
      \epsilon r^{n-2+3\alpha}.
  \end{align*}
  Taking the supremum over $x \in V$ yields the asserted statement.
\end{proof}

\begin{proof}[Proof of \autoref{Lem_BootstrapExponent}]
  This is similar to but somewhat simpler than \cite[Lemma 3.4]{Han2011}.
  If we choose $\tau < 1$ such that $\gamma := c\tau^{\alpha-\beta} < 1$,
  then
  \begin{align*}
    \phi(\tau^k R)
    &\leq \gamma \phi(\tau^{k-1}R)\tau^\beta + \frac{\epsilon}{\tau^\beta} (\tau^k R)^\beta \\
    &\leq \(\gamma^k \frac{\phi(R)}{R^\beta} + \frac{\epsilon}{(1-\gamma)\tau^{\beta}}\)(\tau^kR)^\beta.
  \end{align*}
  From this the assertion follows immediately.
\end{proof}


\section{Morrey and Campanato spaces}
\label{Sec_MorreyCampanato}

An excellent exposition of Morrey and Campanato spaces can be found in Struwe's lecture notes \cite[Kapitel 8 and 10]{Struwe2014}.
We only state the definitions and the results we make use of.

Assume $U \subset \R^n$ is open with smooth boundary.
Let $1 \leq p < \infty$ and $\lambda \geq 0$.

\begin{definition}
  \label{Def_Morrey}
  The \defined{Morrey space} $(L^{p,\lambda}(U),\Abs{\cdot}_{L^{p,\lambda}(U)})$ is the normed vector space defined by
  \begin{equation*}
    L^{p,\lambda}(U)
    := \set*{ f \in L^p(U) : \Abs{f}_{L^{p,\lambda}(U)} < \infty }
  \end{equation*}
  and
  \begin{equation*}
    \Abs{f}_{L^{p,\lambda}(U)}
    := \sup_{x\in U, r > 0} \(r^{-\lambda}\int_{B_r(x) \cap U} \abs{f}^p\)^{1/p}.
  \end{equation*}
\end{definition}

\begin{definition}
  The \defined{Campanato space} $(\cL^{p,\lambda}(U),\Abs{\cdot}_{\cL^{p,\lambda}(U)})$ is the normed vector space defined by
  \begin{equation*}
    \cL^{p,\lambda}(U)
    := \set*{ f \in L^p(U) : [f]_{\cL^{p,\lambda}(U)} < \infty }
  \end{equation*}
  and
  \begin{equation*}
    \Abs{f}_{\cL^{p,\lambda}(U)}
    := \Abs{f}_{L^p(U)} + [f]_{\cL^{p,n}(U)}.
  \end{equation*}
  Here the \defined{Campanato semi-norm} is defined by
  \begin{equation*}
    [f]_{\cL^{p,\lambda}(U)}
    := \sup_{x\in U, r > 0} \(r^{-\lambda}\int_{B_r(x) \cap U} \abs{f-\bar f_{x,r}}^p\)^{1/p}
  \end{equation*}
  with
  \begin{equation*}
    \bar f_{x,r}
    := \fint_{B_r(x)\cap U} f.
  \end{equation*}
\end{definition}

Both Morrey and Campanato spaces are Banach spaces.
The following shed some more light on the relation between Morrey, Campanato and Hölder spaces, and the Campanato regularity properties of harmonic functions.

\begin{prop}[{\cite[Lemma 10.3.1]{Struwe2014}}]
  \label{Prop_CampanatoMorrey}
  If $\lambda \leq n$, then for all $f \in \cL^{p,\lambda}(U)$ we have
  \begin{equation*}
    \Abs{f}_{L^{p,\lambda}(U)} \lesssim \Abs{f}_{\cL^{p,\lambda}(U)}.
  \end{equation*}
\end{prop}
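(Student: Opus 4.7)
The plan is to control, for each $x \in U$ and each radius $r > 0$, the Morrey integrand $r^{-\lambda}\int_{B_r(x) \cap U} \abs{f}^p$ by a constant multiple of $\Abs{f}_{\cL^{p,\lambda}(U)}^p$. Set $d := \diam(U)$ and split into two regimes depending on whether $r$ is comparable to $d$ or smaller.

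For $r \geq d$ the ball exhausts $U$, so trivially $r^{-\lambda}\int_{B_r(x) \cap U} \abs{f}^p \leq d^{-\lambda}\Abs{f}_{L^p(U)}^p$, which is harmless. For $r < d$ I would write $f = (f - \bar f_{x,r}) + \bar f_{x,r}$ and apply the triangle inequality. The fluctuation piece $\int \abs{f - \bar f_{x,r}}^p$ is directly bounded by $r^\lambda [f]_{\cL^{p,\lambda}}^p$, so this half is painless. The remaining contribution is $\abs{B_r(x) \cap U}\cdot \abs{\bar f_{x,r}}^p \lesssim r^n \abs{\bar f_{x,r}}^p$, and the crux of the argument is therefore a suitable upper bound on $\abs{\bar f_{x,r}}$.

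To bound $\abs{\bar f_{x,r}}$ I would telescope through dyadic scales $r_k := 2^{-k}d$, choosing $K$ so that $r_K$ is comparable to $r$. The consecutive averages satisfy $\abs{\bar f_{x,r_{k+1}} - \bar f_{x,r_k}} \leq \fint_{B_{r_{k+1}}(x)\cap U} \abs{f - \bar f_{x,r_k}}$, and by Hölder's inequality together with the definition of the Campanato seminorm this is $\lesssim r_k^{(\lambda-n)/p}[f]_{\cL^{p,\lambda}}$. Summing the resulting geometric series---this is where the hypothesis $\lambda \leq n$ enters in an essential way---yields $\abs{\bar f_{x,r}} \leq \abs{\bar f_{x,d}} + C\,r^{(\lambda-n)/p}[f]_{\cL^{p,\lambda}}$; the base average is handled by another application of Hölder's inequality: $\abs{\bar f_{x,d}} \lesssim d^{-n/p}\Abs{f}_{L^p(U)}$. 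Multiplying by $r^n$, dividing by $r^\lambda$, and using $r^{n-\lambda} \leq d^{n-\lambda}$ (valid because $r \leq d$ and $\lambda \leq n$), all ingredients combine into the claimed estimate.

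The main obstacle is the telescoping bound on the averages; everything else is bookkeeping. The geometric sum converges at the right rate precisely because $\lambda \leq n$---the delicate boundary case $\lambda = n$ produces an additional logarithmic factor in the telescoping, corresponding to the well-known fact that $\cL^{p,n}$ is BMO rather than $L^\infty$, but for the intended application the strict inequality $\lambda < n$ is what is actually needed.
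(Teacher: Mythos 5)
Your proof is correct and is the standard argument (also Struwe's): split $f$ into its ball average and the fluctuation, absorb the fluctuation by the very definition of the Campanato seminorm, and control the average by telescoping through dyadic scales down from $\diam(U)$ (the step $\fint_{B_{r_{k+1}}\cap U}\abs{f-\bar f_{x,r_k}}^p \lesssim r_k^{\lambda-n}[f]_{\cL^{p,\lambda}}^p$ implicitly uses the measure density property $\abs{B_\rho(x)\cap U}\gtrsim \rho^n$, which holds here since $\del U$ is smooth). Your closing remark is more than an aside, though: the proposition as stated, with $\lambda \leq n$, is actually \emph{false} at the endpoint $\lambda = n$, since $\cL^{p,n}(U)$ is essentially BMO while $L^{p,n}(U) = L^\infty(U)$, and BMO $\not\subset L^\infty$ (witness $f(x) = \log\abs{x}$ on $B_1$); the logarithmic divergence you locate in the telescope is precisely the obstruction. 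The hypothesis should read $\lambda < n$, which is also all that the application in \autoref{Sec_Hildebrandt} uses (there $\lambda = n-2+2\alpha'$ with $\alpha' < 1$), so nothing downstream is affected.
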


\begin{theorem}[Poincaré inequality]
  \label{Thm_Poincare}
  For all $f \in L^{p,\lambda}(U)$, we have
  \begin{equation*}
    [f]_{\cL^{p,\lambda+p}(U)}
    \lesssim \Abs{\nabla f}_{L^{p,\lambda}(U)}.
  \end{equation*}  
\end{theorem}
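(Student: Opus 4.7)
The plan is to reduce the inequality to the classical Poincaré inequality applied ball by ball. For each $x \in U$ and $r > 0$, the classical Poincaré inequality on the domain $B_r(x) \cap U$ gives
\begin{equation*}
  \int_{B_r(x) \cap U} \abs{f - \bar f_{x,r}}^p \lesssim r^p \int_{B_r(x) \cap U} \abs{\nabla f}^p,
\end{equation*}
with a constant that can be taken independent of $x$ and $r$. For balls $B_r(x) \subset U$ this is just the rescaled Poincaré inequality on a Euclidean ball; for balls meeting $\partial U$, one appeals to the smoothness (in particular, Lipschitz character) of $\partial U$, which after rescaling produces a uniformly Lipschitz family of domains on which the Poincaré inequality holds with a uniform constant. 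For $r$ comparable to $\diam(U)$ one falls back on the Poincaré inequality on $U$ itself.

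Once this uniform inequality is in place, I divide both sides by $r^{\lambda+p}$ to obtain
\begin{equation*}
  r^{-(\lambda+p)}\int_{B_r(x) \cap U} \abs{f-\bar f_{x,r}}^p
  \lesssim r^{-\lambda}\int_{B_r(x) \cap U} \abs{\nabla f}^p
  \leq \Abs{\nabla f}_{L^{p,\lambda}(U)}^{p}.
\end{equation*}
Taking the supremum over $x \in U$ and $r > 0$ and then extracting $p$-th roots produces the desired bound $[f]_{\cL^{p,\lambda+p}(U)} \lesssim \Abs{\nabla f}_{L^{p,\lambda}(U)}$.

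The only nontrivial point is the uniform Poincaré estimate on $B_r(x) \cap U$; once that is granted, the rest is purely algebraic manipulation of the defining suprema. Since the result is quoted from a reference, in a write-up I would simply invoke the standard Poincaré inequality on Lipschitz domains (noting the smoothness of $\partial U$) rather than prove it from scratch.
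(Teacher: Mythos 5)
The paper states this result without proof or explicit citation (the appendix explains it is merely collecting standard facts from Struwe's lecture notes), so there is no in-paper proof to compare against. Your argument is the standard and correct one: apply the classical Poincaré inequality on $B_r(x)\cap U$ with a constant uniform in $x$ and $r$, divide by $r^{\lambda+p}$, and take the supremum; the only substantive ingredient is the uniformity of the Poincaré constant across the family $\{B_r(x)\cap U\}$, which you correctly attribute to the Lipschitz (indeed smooth) character of $\partial U$ and which is the usual hypothesis under which the Morrey--Campanato machinery is developed. This matches how the result is normally established, and the reduction is sound.
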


\begin{theorem}[Morrey embedding {\cite[Satz 8.6.5]{Struwe2014}}]
  \label{Thm_MorreyEmbedding}
  For all $f \in \cL^{p,n+p\alpha}(U)$, we have
  \begin{equation*}
     [f]_{C^{0,\alpha}(\bar U)}
     \lesssim [f]_{\cL^{p,n+p\alpha}(U)}
  \end{equation*}
\end{theorem}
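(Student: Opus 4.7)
The plan is to follow the standard Campanato-to-Hölder argument: show that the averages $\bar f_{x,r}$ form a Cauchy sequence as $r\to 0$ with rate $r^\alpha$, identify the limit with $f(x)$ at Lebesgue points, and then compare averages at two nearby points.

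First I would establish the dyadic control
\begin{equation*}
  \abs{\bar f_{x,r/2} - \bar f_{x,r}} \lesssim r^{\alpha}\,[f]_{\cL^{p,n+p\alpha}(U)}
\end{equation*}
by using Jensen's inequality on $B_{r/2}(x)\cap U$, writing
\begin{equation*}
  \abs{\bar f_{x,r/2} - \bar f_{x,r}}^p
  \lesssim \fint_{B_{r/2}(x)\cap U} \abs{f - \bar f_{x,r}}^p
  \lesssim r^{-n}\int_{B_r(x)\cap U} \abs{f-\bar f_{x,r}}^p
  \leq r^{p\alpha}\,[f]_{\cL^{p,n+p\alpha}}^p,
\end{equation*}
where the last step invokes the defining bound of the Campanato seminorm with $\lambda=n+p\alpha$. (Here I implicitly use that $U$ has smooth boundary, so for $x\in\bar U$ and $r$ small the volume of $B_r(x)\cap U$ is comparable to $r^n$; this is the only place where the smoothness of $\del U$ enters.)

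Next, summing the geometric series $\sum_k (r/2^k)^\alpha$, I obtain for every $r\in(0,\diam U]$ and every $x\in\bar U$ the Cauchy-type bound
\begin{equation*}
  \abs{\bar f_{x,r} - \bar f_{x,s}} \lesssim r^{\alpha}\,[f]_{\cL^{p,n+p\alpha}(U)}
  \qquad\text{for all } 0 < s \leq r.
\end{equation*}
By the Lebesgue differentiation theorem, $\bar f_{x,s}\to f(x)$ for a.e.\ $x$, so passing $s\to 0$ identifies a continuous representative $\tilde f$ of $f$ satisfying $\abs{\tilde f(x) - \bar f_{x,r}} \lesssim r^\alpha [f]_{\cL^{p,n+p\alpha}}$.

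Finally, for $x,y\in\bar U$ with $r := \abs{x-y} > 0$, I compare $\bar f_{x,2r}$ with $\bar f_{y,2r}$ via the common subset $B_r(x)\cap B_r(y)\cap U$ (whose volume is $\asymp r^n$): two applications of Jensen and the Campanato estimate give $\abs{\bar f_{x,2r} - \bar f_{y,2r}} \lesssim r^{\alpha}[f]_{\cL^{p,n+p\alpha}}$. Combining with the previous paragraph and the triangle inequality yields
\begin{equation*}
  \abs{\tilde f(x) - \tilde f(y)} \lesssim r^{\alpha}\,[f]_{\cL^{p,n+p\alpha}(U)},
\end{equation*}
which is the asserted Hölder bound. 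The main mild obstacle is purely technical: ensuring that the volume-comparability $\abs{B_r(x)\cap U}\asymp r^n$ holds uniformly for $x\in\bar U$ and small $r$, which follows from the interior cone condition implied by $\del U$ being smooth.
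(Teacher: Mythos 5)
Your proposal is correct and is the standard Campanato-to-H\"older argument (dyadic telescoping of averages, identification of the continuous representative via Lebesgue differentiation, and comparison of averages at nearby points through a common intersection). The paper gives no proof of its own but cites Struwe's lecture notes, and the argument there is exactly this one; the only point worth keeping in mind, which you correctly flag, is that the volume comparability $\abs{B_r(x)\cap U}\asymp r^n$ for $x\in\bar U$ is what the smoothness (more precisely, a uniform interior cone condition) of $\del U$ supplies.
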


\begin{theorem}[{\cite[Lemma 10.2.1]{Struwe2014}} and {\cite[Lemma 3.10]{Han2011}}]
  \label{Thm_CampanatoEstimate}
  If $f \in W^{1,2}(B_r(x))$ satisfies
  \begin{equation*}
    \Delta f = 0
  \end{equation*}
  and $0 < s < r$, then
  \begin{equation*}
    \int_{B_s(x)} \abs{f - \bar f_{x,s}}^2
    \lesssim
    \(\frac{s}{r}\)^{(n+2)}\int_{B_r(x)} \abs{f - \bar f_{x,r}}^2.
  \end{equation*}
\end{theorem}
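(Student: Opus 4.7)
The plan is to run the standard Campanato regularity argument for harmonic functions, which rests on two ingredients: the fact that the arithmetic mean $\bar f_{x,s}$ is the $L^2$-best constant approximation of $f$ on $B_s(x)$, and interior gradient estimates for harmonic functions.

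First I would reduce to the case $\bar f_{x,r} = 0$ by replacing $f$ with $f - \bar f_{x,r}$; since constants are harmonic, $\Delta f = 0$ is preserved, and the quantities on both sides of the inequality transform compatibly. Next, the range $s \ge r/4$ is handled trivially: there $(s/r)^{n+2}$ is bounded below, and
\begin{equation*}
  \int_{B_s(x)} |f - \bar f_{x,s}|^2
  \le \int_{B_s(x)} |f|^2
  \le \int_{B_r(x)} |f|^2
  \lesssim (s/r)^{n+2}\int_{B_r(x)}|f - \bar f_{x,r}|^2.
\end{equation*}

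The substantive case is $s < r/4$. Here I use that $\bar f_{x,s}$ minimises $c \mapsto \int_{B_s(x)}|f - c|^2$ over $c \in \R$, so that in particular
\begin{equation*}
  \int_{B_s(x)}|f - \bar f_{x,s}|^2
  \le \int_{B_s(x)}|f - f(x)|^2
  \le \omega_n\, s^{n+2}\, \sup_{B_{r/4}(x)} |\nabla f|^2
\end{equation*}
by the mean value theorem applied to $f$. It then remains to control $\sup_{B_{r/4}(x)} |\nabla f|^2$ by $r^{-n-2}\int_{B_r(x)} f^2$. This is classical: the function $|\nabla f|^2$ is subharmonic (each component of $\nabla f$ is harmonic), so the mean value inequality gives $\sup_{B_{r/4}(x)} |\nabla f|^2 \lesssim r^{-n}\int_{B_{r/2}(x)} |\nabla f|^2$, and Caccioppoli's inequality for the harmonic function $f$ yields $\int_{B_{r/2}(x)} |\nabla f|^2 \lesssim r^{-2}\int_{B_r(x)} f^2$ (using a standard cutoff and integration by parts, with the constant being subtracted since $f - c$ is also harmonic). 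Chaining these estimates and recalling the reduction $\bar f_{x,r} = 0$ gives the claim.

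There is no real obstacle here; the only care needed is in the choice of comparison constant ($f(x)$ rather than $\bar f_{x,s}$) and in the combination of the subharmonic mean value inequality with Caccioppoli to produce the $r^{-n-2}$ scaling. Both steps are standard, and the whole argument is essentially one short display once the reduction is in place.
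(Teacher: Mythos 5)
Your proof is correct, and it is essentially the standard Campanato estimate argument for harmonic functions that the cited references (Struwe, Lemma 10.2.1; Han, Lemma 3.10) carry out: reduce to mean zero, dispose of the near-diagonal case $s\gtrsim r$ trivially, and for small $s$ combine the $L^2$-optimality of $\bar f_{x,s}$ with interior gradient estimates (here obtained via subharmonicity of $\abs{\nabla f}^2$ and Caccioppoli) to get the $(s/r)^{n+2}$ gain. Since the paper merely cites this result without supplying its own proof, your argument fills the same role.
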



\paragraph{Acknowledgements.}

AJ would like to thank S.-T.~Yau for introducing him to this problem.
We thank Song Sun for pointing out a mistake in an earlier proof of \autoref{Prop_BlowUpSingularSet}.

\printreferences

\end{document}
